\definecolor{Myblue}{rgb}{0,0,0.6}
\newtheorem{theorem}{Theorem}[section]
\newtheorem{lemma}[theorem]{Lemma}
\newtheorem{corollary}[theorem]{Corollary}
\newtheorem{setup}[theorem]{Setup}
\newcommand{\tagarray}{\mbox{}\refstepcounter{equation}$(\theequation)$}
\newtheoremstyle{example}{\topsep}{\topsep}
	{}
	{}
	{\bfseries}
	{.}
	{2pt}
	{\thmname{#1}\thmnumber{ #2}\thmnote{ #3}}
	\theoremstyle{example}
	\newtheorem{definition}[theorem]{Definition}
	\newtheorem{example}[theorem]{Example}
	\newtheorem{remark}[theorem]{Remark}
\numberwithin{equation}{section}
\def\AA{\mathcal{A}}
\def\BB{\mathcal{B}}
\def\HH{\HH}
\def\HH{\mathcal{H}}
\def\KK{\mathcal{K}}
\def\ob{\operatorname{ob}}
\def\eval{\operatorname{ev}}
\def\res{\operatorname{Res}}
\def\Ker{\operatorname{Ker}}
\def\stab{\operatorname{stab}}
\def\Hom{\operatorname{Hom}}
\def\be{\begin{equation}}
\def\ee{\end{equation}}
\def\nZ{\mathds{Z}}
\DeclareMathOperator{\End}{End}
\DeclareMathOperator{\hmf}{hmf}
\DeclareMathOperator{\mfdg}{mf}
\DeclareMathOperator{\Spec}{Spec}
\DeclareMathOperator{\At}{At}
\DeclareMathOperator{\HF}{HF}
\DeclareMathOperator{\sigmastar}{\sigma_t}
\DeclareMathOperator{\vAt}{At}
\begin{document}

\def\Res{\res\!}
\newcommand{\ud}{\mathrm{d}}
\newcommand{\Ress}[1]{\res_{#1}\!}
\newcommand{\cat}[1]{\mathcal{#1}}
\newcommand{\lto}{\longrightarrow}
\newcommand{\xlto}[1]{\stackrel{#1}\lto}
\newcommand{\md}[1]{\mathscr{#1}}
\newcommand{\coeff}[1]{\widetilde{#1}}
\def\sus{\l}
\def\l{\,|\,}
\def\sgn{\textup{sgn}}
\def\samp{\zeta}
\def\Samp{Z}
\def\traff{N}

\title{Constructing $A_\infty$-categories of matrix factorisations}
\author{Daniel Murfet}

\maketitle

\begin{abstract}
We study constructive $A_\infty$-models of the DG-category of matrix factorisations of a potential over a commutative $\mathbb{Q}$-algebra $k$, consisting of a Hom-finite $A_\infty$-category equipped with an $A_\infty$-idempotent functor.
\end{abstract}

\tableofcontents

\section{Introduction}

This paper continues the project from \cite{pushforward,cut,lgdual} of making the theory of affine B-twisted topological Landau-Ginzburg models \emph{constructive}, in the sense of deriving formulas and algorithms which compute the fundamental categorical structures from coefficients of the potential and the differentials of matrix factorisations. For example, the units and counits of adjunction in the bicategory of Landau-Ginzburg models \cite{lgdual} and the pushforward and convolution operations \cite{pushforward} may be described in terms of various flavours of Atiyah classes.

We study \emph{idempotent finite $A_\infty$-models} of DG-categories of matrix factorisations over arbitrary $\mathbb{Q}$-algebras $k$. An idempotent finite model of a DG-category $\AA$ over $k$ is a pair $(\mathcal{B}, E)$ consisting of an $A_\infty$-category $\mathcal{B}$ over $k$ which is Hom-finite, in the sense that for each pair $X,Y$ of objects the complex $\mathcal{B}(X,Y)$ is a finitely generated projective $k$-module, together with a diagram of $A_\infty$-functors
\[
\xymatrix@C+3pc{
\AA \ar@<1ex>[r]^I & \mathcal{B} \ar@(ur,dr)^E \ar@<1ex>[l]^P
}
\]
with $A_\infty$-homotopies $P \circ I \simeq 1$ and $I \circ P \simeq E$. Using the finite pushforward construction of \cite{pushforward} and Clifford actions of \cite{cut} we explain how to define an idempotent finite $A_\infty$-model of the DG-category of matrix factorisations $\AA$ of any potential $W$ over $k$. When $k$ is a field this is constructive in the sense that we give algorithms for computing the entries in the higher $A_\infty$-operations on $\BB$ and the components of $E$, viewed as matrices, from the Atiyah class of $\AA$ and a Gr\"obner basis of the defining ideal of the critical locus of $W$.

We note that our goal is \emph{not} to construct a minimal model: $\BB$ has nonzero differential. When $k$ is a field the usual approach to finding a Hom-finite $A_\infty$-model of $\AA$ is to equip $H^*(\AA)$ with an $A_\infty$-structure, so that there is a diagram of $A_\infty$-functors
\[
\xymatrix@C+3pc{
\AA \ar@<1ex>[r]^I & H^*(\AA) \ar@<1ex>[l]^P
}
\]
and $A_\infty$-homotopies $P \circ I \simeq 1$ and $I \circ P \simeq 1$. The problem is that the minimal model is only under good control for those matrix factorisations $X,Y$ where we happen to know a good cohomological splitting on $\AA(X,Y)$. In our approach the idempotent finite model $(\BB,E)$ is constructed directly from $\AA$, and if we happen to know a cohomological splitting then this provides the data necessary to split $E$. An important example is the case where $X = Y = k^{\stab}$ is the standard generator, where there is a natural cohomological splitting and the resulting $A_\infty$-structure has been studied by Seidel \cite[\S 11]{seidel_hms}, Dyckerhoff \cite[\S 5.6]{d0904.4713}, Efimov \cite[\S 7]{efimov} and Sheridan \cite{sheridan}. We explain how to split our idempotent $E$ in this special case and recover the minimal $A_\infty$-model of $\AA(k^{\stab},k^{\stab})$ in Section \ref{section:generator}. 
\\

To explain in more detail, let $\AA = \mfdg(R,W)$ be the DG-category of finite-rank matrix factorizations of a potential $W \in R = k[x_1,\ldots,x_n]$ over an arbitrary $\mathbb{Q}$-algebra $k$. This is a DG-category over $R$, which we can view as a $\nZ_2$-graded $R$-module
\[
\HH_{\AA} = \bigoplus_{X,Y} \AA(X,Y)
\]
equipped with some $R$-linear structure, namely, the differential $\mu_1$ and composition operator $\mu_2$. Let $\varphi: k \lto R$ denote the inclusion of constants. We can ask if the restriction of scalars $\varphi_*( \HH_{\AA} )$ is $A_\infty$-homotopy equivalent over $k$ to an $A_\infty$-category which is Hom-finite over $k$, and can such a finite model be described constructively? This is related to the problem of pushforwards considered in \cite{pushforward} and, here as there, it is unclear that in general a \emph{direct} construction of the finite pushforward exists.

Instead, following \cite{pushforward} the conceptual approach we adopt here is to seek an algorithm which constructs both a larger object $\mathcal{B}$ which is a Hom-finite $A_\infty$-category over $k$ together with an idempotent $E: \mathcal{B} \lto \mathcal{B}$ which splits to $\AA$. More precisely, the larger object is obtained by adjoining to $\AA$ a number of odd supercommuting variables $\theta_1,\ldots,\theta_n$ and completing along the the critical locus of $W$ to obtain the DG-category
\[
\AA_\theta = \bigwedge F_\theta \otimes_k \AA \otimes_R \widehat{R}\,,
\]
where $F_\theta$ is the $\nZ_2$-graded $k$-module $\bigoplus_{i=1}^n k \theta_i$ which is concentrated in odd degree, and $\widehat{R}$ denotes the $I$-adic completion where $I = (\partial_{x_1} W, \ldots, \partial_{x_n} W)$. The differential on $\AA_\theta$ is the one inherited from $\AA$. Note that the completion $\AA \otimes_R \widehat{R}$ can be recovered from $\AA_\theta$ by splitting the idempotent DG-functor on $\AA_\theta$ which arises from the morphism of algebras
\be\label{eq:projector_e}
\bigwedge F_\theta \lto k \cdot 1 \lto \bigwedge F_\theta
\ee
which projects onto the identity by sending all $\theta$-forms of nonzero degree to zero.

One should think of $F_\theta$ as the normal bundle to the critical locus of $W$, and $\AA_\theta$ can be equipped with a natural strong deformation retract which arises from a connection $\nabla$ ``differentiating'' in the normal directions to the critical locus. This strong deformation retract can be interpreted as an analogue in algebraic geometry of the deformation retract associated to the Euler vector field of a tubular neighborhood; see Appendix \ref{section:formaltub}. Applying the homological perturbation lemma results in a Hom-finite $A_\infty$-category $\BB$ with
\[
\BB(X,Y) = R/I \otimes_R \Hom_R(X,Y)\,.
\]
By construction there is an idempotent finite $A_\infty$-model
\[
\xymatrix@C+3pc{
\AA \otimes_R \widehat{R}\, \ar@<1ex>[r]^-I & \mathcal{B} \ar@(ur,dr)^E \ar@<1ex>[l]^-P
}
\]
of the completion $\AA \otimes_R \widehat{R}$, which we show is homotopy equivalent to $\AA$ over $k$ (in particular $\AA$ and $\AA \otimes_R \widehat{R}$ are both DG-enhancements of the $k$-linear triangulated category $\hmf(R,W)$, with the latter being analogous to working with matrix factorisations over the power series ring $k\llbracket \bold{x} \rrbracket$). The $A_\infty$-idempotent $E$ arises in the obvious way as the transfer to $\BB$ of the idempotent \eqref{eq:projector_e}.

\vspace{0.3cm}

\textbf{Outline of the paper.} In Section \ref{section:the_model} we give the details of the above sketch of the construction of the idempotent finite model of $\operatorname{mf}(R,W)$, although the proofs are collected in Appendix \ref{section:proofs}. The algorithmic content of the theory is summarised in Section \ref{section:the_algorithms}, but developed over the course of Section \ref{section:towards} and Section \ref{section:feynman_diagram} culminating in the Feynman rules of Section \ref{section:feynman_diagram_4}. Some of the geometric intuition for the central strong deformation retract is developed in Section \ref{section:formaltub}. In Appendix \ref{section:noetherian} we give some technical observations necessary to remove a Noetherian hypothesis from \cite{cut}.

\vspace{0.3cm}

\textbf{Related work.} The approach we develop here seems to be related to ideas developed in the complex analytic setting by Shklyarov \cite{shklyarov_cy} in order to put a Calabi-Yau structure on $\AA$, although we do not understand the precise connection. For applications to string field theory it is important to construct \emph{cyclic} $A_\infty$-minimal models; see for example \cite{carqueville}. For a recent approach to this problem for the endomorphism DG-algebra of $k^{\stab}$ see \cite{tu}. We do not understand the interplay between cyclic $A_\infty$-structures and idempotent finite $A_\infty$-models. This project began as an attempt to understand the work on deformations of matrix factorisations and effective superpotentials in the mathematical physics literature \cite{baumgartl1, baumgartl2, baumgartl3, baumgartl4, carqueville2, carqueville3, knapp} which should be better known to mathematicians. Some ideas developed here were inspired by old work of Herbst-Lazaroiu \cite{herbst} that has now culminated in a new approach to non-affine Landau-Ginzburg models \cite{babalic}.

\vspace{0.3cm}

\textbf{Acknowledgements.} Thanks to Nils Carqueville for introducing me to $A_\infty$-categories, Calin Lazaroiu for encouragement and the opportunity to present the results at the workshop ``String Field Theory of Landau-Ginzburg models'' at the IBS Center for Geometry and Physics in Pohang. The author was supported by the ARC grant DP180103891.

\section{Background}

Throughout $k$ is a commutative $\mathbb{Q}$-algebra and unless specified otherwise $\otimes$ means $\otimes_k$. If $\bold{x} = (x_1,\ldots,x_n)$ is a sequence of formal variables then $k[\bold{x}]$ denotes $k[x_1,\ldots,x_n]$ and similarly for power series rings. Given $M \in \mathbb{N}^n$ we write $x^M$ for $x_1^{M_1} \cdots x_n^{M_n}$. 

Let $R$ be a commutative ring. Given finite-rank free $\nZ_2$-graded $R$-modules $M, N$ and $\phi \in \Hom_R(M,N)$ we say that $\phi$ is \emph{even} (resp. \emph{odd}) if $\phi(M_i) \subseteq N_i$ (resp. $\phi(M_i) \subseteq N_{i+1}$) for all $i \in \nZ_2$. This makes $\Hom_R(M,N)$ into a $\nZ_2$-graded $R$-module. Given two homogeneous operators $\psi, \phi$ the \emph{graded commutator} is
\be
[\phi, \psi] = \phi \psi - (-1)^{|\phi||\psi|} \psi \phi\,.
\ee
In this note all operators are given a $\nZ_2$-grading and the commutator always denotes the graded commutator. We briefly recall some important operators on exterior algebras
\[
\bigwedge F_\xi = \bigwedge \bigoplus_{i=1}^r k \xi_i
\]
where $F_\xi = \bigoplus_{i=1}^r k \xi_i$ denotes a free $k$-module of rank $r$ with basis $\xi_1,\ldots,\xi_r$. We give $F_\xi$ a $\nZ_2$-grading by assigning $|\xi_i| = 1$, that is, $F_\xi \cong k^{\oplus r}[1]$. The inherited $\nZ_2$-grading on $\bigwedge F_\xi$ is the reduction mod $2$ of the usual $\nZ$-grading on the exterior algebra, e.g. $|\xi_1 \xi_2| = 0$.

We define odd operators $\xi_j \wedge (-), \xi_j^* \,\lrcorner\, (-)$ on $\bigwedge F_\xi$ by wedge product and contraction, respectively, where contraction is defined by the formula
\begin{align*}
\xi_j^* \,\lrcorner\, \Big( \xi_{i_1} \wedge \cdots \wedge \xi_{i_s} \Big) = \sum_{l=1}^s (-1)^{l-1} \delta_{j, i_l} \xi_{i_1} \wedge \cdots \wedge \widehat{ \xi_{i_l} } \wedge \cdots \wedge \xi_{i_s}\,.
\end{align*}
Often we will simply write $\xi_j$ for $\xi_j \wedge (-)$ and $\xi_j^*$ for $\xi_j^* \,\lrcorner\, (-)$. Clearly with this notation, as operators on $\bigwedge F_\xi$, we have the commutator (as always, graded)
\be\label{eq:wedge_contract_comm}
\big[ \xi_i, \xi_j^* \big] = \xi_i \xi_j^* + \xi_j^* \xi_i = \delta_{ij} \cdot 1
\ee
and also $[ \xi_i, \xi_j ] = [\xi_i^*, \xi_j^*] = 0$.

\subsection{$A_\infty$-categories}\label{section:ainfcat}

For the theory of $A_\infty$-categories we follow the notational conventions of \cite[\S 2]{lazaroiu}, which we now recall. Another good reference is Seidel's book \cite{seidel}. A small $\nZ_2$-graded $A_\infty$-category $\cat{A}$ over $k$ is specified by a set of objects $\operatorname{ob}(\cat{A})$ and $\nZ_2$-graded $k$-modules $\AA(a,b)$ for any pair $a,b \in \operatorname{ob}(\cat{A})$ together with $k$-linear maps
\[
\mu_{a_n,\ldots,a_0}: \AA(a_{n-1}, a_n) \otimes \cdots \otimes \AA(a_0, a_1) \lto \AA(a_0,a_n)
\]
of degree $2 - n \equiv n$ for every sequence of objects $a_0,\ldots,a_n$ with $n \ge 0$. If the objects involved are clear from the context, we will write $\mu_n$ for this map. These maps are required to satisfy the following equation for $n \ge 1$
\be\label{eq_ainf_constraints}
\sum_{\substack{i \ge 0, j \ge 1 \\ 1 \le i + j \le n}} (-1)^{ij + i + j + n} \mu_{n-j+1}\Big( x_n \otimes \cdots \otimes x_{i+j+1} \otimes \mu_j( x_{i+j} \otimes \cdots \otimes x_{i+1} ) \otimes x_i \otimes \cdots \otimes x_1 \Big) = 0
\ee
In particular we have a degree zero map
\[
\mu_2 = \mu_{cba}: \AA(b,c) \otimes \AA(a,b) \lto \AA(a,c)
\]
which satisfies the $n = 3$ equation
\begin{align*}
- &\mu_2( x \otimes \mu_2( y \otimes z ) ) + \mu_2( \mu_2( x \otimes y ) \otimes z ) + \mu_1 \mu_3( x \otimes y \otimes z )\\
& + \mu_3( \mu_1(x) \otimes y \otimes z ) + \mu_3( x \otimes \mu_1(y) \otimes z ) + \mu_3( x \otimes y \otimes \mu_1(z) ) = 0
\end{align*}
expressing that $\mu_2$ is associative up to the homotopy $\mu_3$ relative to the differential $\mu_1$. The operators $\mu_n$ are sometimes referred to as \emph{higher operations}. Any DG-category is an $A_\infty$-category where $\mu_1$ is the differential, $\mu_2$ is the composition and $\mu_n = 0$ for $n \ge 3$. Note that a DG-category has identity maps $u_a \in \AA^0(a,a)$ for all objects $a$, and these make $\cat{A}$ a \emph{strictly unital} $A_\infty$-category \cite[\S 2.1]{lazaroiu}, \cite[\S I.2]{seidel}.

To minimise the trauma of working with $A_\infty$-categories, it is convenient to adopt a different point of view on the higher operations, which eliminates most of the signs: from the $\mu$ we can define \emph{suspended forward compositions} \cite[\S 2.1]{lazaroiu}
\be
r_{a_0,\ldots,a_n}: \AA(a_0, a_1)[1] \otimes \cdots \otimes \AA(a_{n-1}, a_n)[1] \lto \AA(a_0,a_n)[1]
\ee
for which the $A_\infty$-constraints \eqref{eq_ainf_constraints} take the more attractive form
\[
\sum_{\substack{i \ge 0, j \ge 1 \\ 1 \le i + j \le n}} r_{a_0, \ldots, a_i, a_{i+j}, \ldots, a_n} \circ \Big( \operatorname{id}_{a_0a_1} \otimes \cdots \otimes \operatorname{id}_{a_{i-1}a_i} \otimes r_{a_i,\ldots,a_{i+j}} \otimes \operatorname{id}_{a_{i+j}a_{i+j+1}} \otimes \cdots \otimes \operatorname{id}_{a_{n-1}a_n} \Big) = 0
\]
As before we write $r_n$ for $r_{a_0,\ldots,a_n}$ if the indices are clear. Note that while $\mu_n$ has $\nZ_2$-degree $n$, the $r_n$'s are all odd operators. The $\nZ_2$-degree of a homogeneous element $x \in \AA(a,b)$ will be denoted $|x|$ and we write $\widetilde{x} = |x| + 1$ for the degree of $x$ viewed as an element of $\AA(a,b)[1]$. Sometimes we refer to this as the \emph{tilde grading}. We refer the reader to \cite{lazaroiu} for the definition of the suspended forward compositions, but note $\mu_1(x) = r_1(x)$ and
\be\label{eq:mu2vsr2}
\mu_2(x_1 \otimes x_2) = (-1)^{\widetilde{x}_1\widetilde{x}_2 + \widetilde{x}_1 + 1} r_2(x_2 \otimes x_1)\,.
\ee
The Koszul sign rule always applies when we evaluate the application of a tensor product of homogeneous linear maps on a tensor, for example since $r_2$ is odd
\[
1 \otimes r_2: \AA(a,b)[1] \otimes \AA(b,c)[1] \otimes \AA(c,d)[1] \lto \AA(a,b)[1] \otimes \AA(b,d)[1]
\]
applied to a tensor $x_3 \otimes x_2 \otimes x_1$ is
\[
(1 \otimes r_2)(x_3 \otimes x_2 \otimes x_1) = (-1)^{\widetilde{x_3}} x_3 \otimes r_2( x_2 \otimes x_1 )
\]
where we had to know that the domain involved $\AA(a,b)[1]$ rather than $\AA(a,b)$ to know that we were supposed to use the tilde grading on $x_3$. 

We also use the sector decomposition of \cite[\S 2.2]{lazaroiu}. We associate to $\cat{A}$ the $k$-module
\[
\HH_{\cat{A}} = \bigoplus_{a,b \in \operatorname{ob}(\AA)} \AA(a,b)
\]
equipped with the induced $\nZ_2$-grading. Let $Q$ be the commutative associative $k$-algebra (without identity) generated by $\epsilon_a$ for $a \in \operatorname{ob}(\cat{A})$ subject to the relations $\epsilon_a \epsilon_b = \delta_{ab} \epsilon_a$ (this non-unital algebra is denoted $R$ in \cite{lazaroiu}). Then $\HH_{\cat{A}}$ has a $Q$-bimodule structure in which $\epsilon_a$ acts on the left by the projector of $\HH_{\AA}$ onto the subspace $\bigoplus_{b \in \operatorname{ob}(\AA)} \AA(a,b)$ and $\epsilon_b$ acts on the right by the projector of $\HH_{\AA}$ onto the subspace $\bigoplus_{a \in \operatorname{ob}(\AA)} \AA(a,b)$. The $n$-fold tensor product of the $Q$-bimodule $\HH_{\AA}$ over $Q$ is
\be\label{eq:bimodule_tensor_hh}
\HH_{\AA}^{\otimes_Q n} = \bigoplus_{a_0,\ldots,a_n \in \operatorname{ob}(\AA)} \AA(a_0,a_1) \otimes \AA(a_1,a_2) \otimes \cdots \otimes \AA(a_{n-1},a_n)
\ee
with the obvious $Q$-bimodule structure involving the values of $a_0, a_n$, so that the forward suspended product $r_n$ is an odd $Q$-bilinear map from $\HH_{\AA}[1]^{\otimes_Q n} \lto \HH_{\AA}[1]$.

\section{The idempotent finite model}\label{section:the_model}

Throughout $k$ is a commutative $\mathbb{Q}$-algebra and all $A_\infty$-categories are $k$-linear.

\begin{definition} An $A_\infty$-category $\mathcal{C}$ is called \emph{Hom-finite} if for every pair $a,b$ of objects the underlying $k$-module of $\mathcal{C}(a,b)$ is a finitely generated and projective $k$-module.
\end{definition}

\begin{definition} An \emph{idempotent finite $A_\infty$-model} of a DG-category $\AA$ is a pair $(\BB, E)$ consisting of a Hom-finite $A_\infty$-category $\BB$ and $A_\infty$-functor $E: \BB \lto \BB$, and a diagram
\[
\xymatrix@C+3pc{
\AA \ar@<1ex>[r]^I & \BB \ar@(ur,dr)^E \ar@<1ex>[l]^P
}
\]
of $A_\infty$-functors and $A_\infty$-homotopies $P \circ I \simeq 1$ and $I \circ P \simeq E$. 
\end{definition}

We recall from \cite{lgdual} the definition of a potential:

\begin{definition}\label{defn:potential} A polynomial $W \in k[x_1,\ldots,x_n]$ is a \textsl{potential} if
\begin{itemize}
\item[(i)] $\partial_{x_1} W,\ldots,\partial_{x_n} W$ is a quasi-regular sequence;
\item[(ii)] $k[x_1,\ldots,x_n]/(\partial_{x_1} W,\ldots,\partial_{x_n} W)$ is a finitely generated free $k$-module;
\item[(iii)] the Koszul complex of $\partial_{x_1} W,\ldots,\partial_{x_n} W$ is exact except in degree zero.
\end{itemize}
\end{definition}

A typical example is a polynomial $W \in \mathbb{C}[x_1,\ldots,x_n]$ with isolated critical points \cite[Example 2.5]{lgdual}. As shown in \cite{lgdual}, these hypotheses on a potential are sufficient to produce all the properties relevant to two-dimensional topological field theory, even if $k$ is not a field. If $k$ is Noetherian then (iii) follows from (i).

\begin{definition} Given a potential $W \in R = k[x_1,\ldots,x_n]$ the DG-category $\AA = \mfdg(R,W)$ has as objects \emph{matrix factorisations} of $W$ over $R$ \cite{EisenbudMF} that is, the pairs $(X, d_X)$ consisting of a $\mathbb{Z}_2$-graded free $R$-module $X$ of finite rank and an odd $R$-linear operator $d_X: X \lto X$ satisfying $d_X^2 = W \cdot 1_X$. We define
\begin{gather*}
\AA(X,Y) = \big( \Hom_R(X,Y) \,, d_{\Hom} \big)\,,\\
d_{\Hom}(\alpha)  = d_Y \circ \alpha - (-1)^{|\alpha|} \alpha \circ d_X\,.
\end{gather*}
The composition is the usual composition of linear maps.
\end{definition}

Throughout we set $I = ( \partial_{x_i} W, \ldots, \partial_{x_n} W )$ to be the defining ideal of the critical locus, and write $\widehat{R}$ for the $I$-adic completion. Let $W \in R$ be a potential and let $\AA$ be a full sub-DG-category of the DG-category of matrix factorisations $\mfdg( R, W )$. The first observation is that we can replace $\AA$ by the completion $\AA \otimes_R \widehat{R}$.

\begin{lemma}\label{lemma:completion_he} Let $W \in R$ be a potential. The canonical DG-functor
\[
\AA \lto \AA \otimes_R \widehat{R}
\]
is a $k$-linear homotopy equivalence, that is, for every pair of matrix factorisations $X,Y$
\[
\Hom_R(X,Y) \lto \Hom_R(X,Y) \otimes_R \widehat{R}
\]
is a homotopy equivalence over $k$.
\end{lemma}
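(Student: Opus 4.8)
The plan is to establish the homotopy equivalence one Hom-complex at a time. Fix matrix factorisations $X,Y$ and write $M = \Hom_R(X,Y)$, a finite-rank free $\nZ_2$-graded $R$-module equipped with the odd differential $d_{\Hom}$ satisfying $d_{\Hom}^2 = 0$ (since $d_X^2 = d_Y^2 = W \cdot 1$ cancel). The claim is that the completion map $M \lto M \otimes_R \widehat{R}$ is a homotopy equivalence of complexes of $k$-modules. The key structural input is the \emph{null-homotopy for the action of $I$}: for a matrix factorisation, multiplication by $\partial_{x_i} W$ on $M$ is chain-homotopic to zero, via the homotopy $\lambda_i = [\partial_{x_i}, -]$ built from $\partial_{x_i} d_X$ and $\partial_{x_i} d_Y$; this is the standard Atiyah/Koszul observation that $\partial_{x_i} W = [d_{\Hom}, \lambda_i]$ as operators on $M$. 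Consequently every element of $I^N$ acts on $H^*(M)$ as zero, and more: the $\lambda_i$ assemble into a contracting-type homotopy showing the pro-system $\{M/I^N M\}$ is, up to homotopy, eventually constant.

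The steps, in order, would be: \textbf{(1)} Record the null-homotopies $\partial_{x_i} W \simeq 0$ on $M$ and note that finite products $\lambda_{i_1}\cdots\lambda_{i_N}$ give null-homotopies for monomials in $I^N$, so the canonical surjection $M \twoheadrightarrow M/I^N M$ induces an isomorphism on cohomology for all $N \ge 1$ (the kernel $I^N M$ is, as a complex, null-homotopic). \textbf{(2)} Deduce that $M \lto M \otimes_R \widehat R = \varprojlim_N M/I^N M$ is a cohomology isomorphism: since each $M/I^N M \twoheadleftarrow M/I^{N+1}M$ is a cohomology isomorphism between complexes of finitely generated $k$-modules, and (by part (iii) of the potential hypothesis, i.e. exactness of the Koszul complex off degree zero) the transition maps $I^N M/I^{N+1}M$ are suitably controlled, the $\varprojlim^1$ term vanishes and $H^*(M) \xrightarrow{\sim} H^*(M \otimes_R \widehat R)$. \textbf{(3)} Upgrade the cohomology isomorphism to a genuine homotopy equivalence over $k$: both $M$ and $M \otimes_R \widehat R$ are complexes of projective (indeed, for $M$, free) $k$-modules — here one uses part (ii) of Definition~\ref{defn:potential}, which forces $R/I$ and hence each $M/I^N M$ to be finitely generated free over $k$, and a limit argument for $M \otimes_R \widehat R$ — so a quasi-isomorphism between them is automatically a homotopy equivalence by the standard fact that a quasi-isomorphism of bounded-below (here $\nZ_2$-graded, hence automatically bounded) complexes of projectives is a homotopy equivalence.

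The main obstacle is step \textbf{(2)}: controlling the inverse limit. One must be careful that completion is exact on the relevant modules and that no $\varprojlim^1$ obstruction appears when passing from the level-wise cohomology isomorphisms $M \simeq M/I^N M$ to the limit $M \otimes_R \widehat R$. This is precisely where the quasi-regularity of $\partial_{x_1}W,\ldots,\partial_{x_n}W$ and the exactness of their Koszul complex (Definition~\ref{defn:potential}(i),(iii)) are needed: they guarantee that $\gr_I R$ is a polynomial ring over $R/I$, so the associated graded of the filtration on $M$ is well-behaved, the system $\{M/I^N M\}$ has surjective (hence Mittag-Leffler) transition maps, and $\varprojlim^1$ vanishes. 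Once the limit is under control, steps (1) and (3) are routine homological algebra. It is worth noting that the hypotheses on $W$ are exactly calibrated so that this works without assuming $k$ Noetherian, matching the discussion after Definition~\ref{defn:potential}.
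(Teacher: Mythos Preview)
Your step~(1) is false, and the argument you give for it does not prove what you claim. Having null-homotopies $\lambda_i$ for the action of each $t_i = \partial_{x_i}W$ tells you that $I$ annihilates $H^*(M)$, but it does \emph{not} tell you that the quotient map $M \to M/I^N M$ is a quasi-isomorphism, i.e.\ that the subcomplex $I^N M$ is acyclic. Concretely: take $k$ a field, $W = x^2 \in R = k[x]$, and $X = Y$ the Koszul factorisation $(R \oplus R[1],\, x\xi^* + x\xi)$. Then $M = \End_R(X)$ has $\dim_k H^*(M) = 2$, whereas $I = (x)$ and $M/IM = M_2(k)$ carries the zero differential, so $\dim_k H^*(M/IM) = 4$. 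Thus $M \to M/IM$ is not a quasi-isomorphism, and your entire strategy of comparing $M$ to $\widehat{M}$ through the tower $\{M/I^N M\}$ collapses.

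The paper's proof (in Appendix~\ref{section:noetherian}, via Remark~\ref{remark:fixing_dm}) takes a genuinely different route that sidesteps this. It never claims $M \simeq M/IM$; instead it shows that $M$ and $M' = M \otimes_R \widehat R$ each arise, in the homotopy category $\HF(k,0)$, as the \emph{same direct summand} of $M/IM$. The Koszul complex over $R$ (resp.\ over $\widehat R$) together with the homotopies $\lambda_i$ produces maps $\psi,\vartheta$ (resp.\ $\psi',\vartheta'$) exhibiting $M[n]$ (resp.\ $M'[n]$) as a retract of $M/IM$, and a diagram chase shows the two idempotents $\vartheta\psi$ and $\vartheta'\psi'$ coincide. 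Since two splittings of the same idempotent are isomorphic, the canonical map $M \to M'$ is a homotopy equivalence. This argument never passes through a quasi-isomorphism-plus-projectivity step, which is just as well: your step~(3) would also be problematic, since $\widehat R \cong R/I \otimes_k k\llbracket\bold t\rrbracket$ as a $k$-module, and $k\llbracket\bold t\rrbracket \cong \prod_{\mathbb N^n} k$ is not projective over a general commutative $\mathbb Q$-algebra $k$ (e.g.\ $k = \mathbb Q[y]$).
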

\begin{proof}
See Appendix \ref{section:noetherian}.
\end{proof}

To construct an idempotent finite model of $\AA$ we form the extension
\be\label{eq:defn_AAtheta}
\AA_{\theta} = \bigwedge( k \theta_1 \oplus \cdots \oplus k \theta_n ) \otimes \AA \otimes_R \widehat{R}
\ee
which is a DG-category with the same objects as $\AA$ and mapping complexes
\[
\AA_{\theta}( X, Y ) = \bigwedge( k \theta_1 \oplus \cdots \oplus k \theta_n ) \otimes \AA(X,Y) \otimes_R \widehat{R}\,.
\]
The differentials in $\AA_{\theta}$ are induced from $\AA$ and the composition rule is obtained from multiplication in the exterior algebra and composition in $\AA$, taking into account Koszul signs when moving $\theta$-forms past morphisms in $\AA$. Next we consider the $\nZ_2$-graded modules $\BB(X,Y)$ and the $Q$-bimodule $\HH_{\BB}$ defined in Section \ref{section:ainfcat}, namely
\begin{gather*}
\BB(X,Y) = R/I \otimes_R \Hom_R(X,Y)\\
\HH_{\BB} = R/I \otimes_R \HH_{\AA} = \bigoplus_{X,Y \in \AA} \BB(X,Y)\,.
\end{gather*}
At the moment this has no additional structure: it is just a module, not an $A_\infty$-category. But we note that since $\Hom_R(X,Y)$ is a free $R$-module of finite rank, and $R/I$ is free of finite rank over $k$, the spaces $\BB(X,Y)$ are free $k$-modules of finite rank. The goal of this section is to construct higher $A_\infty$-operations $\rho_k$ on $\HH_{\BB}$. 

\begin{setup}\label{setup:overall} Throughout we adopt the following notation:
\begin{itemize}
\item $R = k[\bold{x}] = k[x_1,\ldots,x_n]$.
\item $F_\theta = \bigoplus_{i=1}^n k\theta_i$ is a free $\mathbb{Z}_2$-graded $k$-module of rank $n$, with $|\theta_i| = 1$.
\item $t_1,\ldots,t_n$ is a quasi-regular sequence in $R$, such that with $I = (t_1,\ldots,t_n)$
\begin{itemize}
\item $R/I$ is a finitely generated free $k$-module
\item each $t_i$ acts null-homotopically on $\AA(X,Y)$ for all $X,Y \in \AA$
\item the Koszul complex of $t_1,\ldots,t_n$ over $R$ is exact except in degree zero.
\end{itemize}
\item We choose a $k$-linear section $\sigma: R/I \lto R$ of the quotient map $R \lto R/I$ and as in Appendix \ref{section:formaltub} we write $\nabla$ for the associated connection with components $\partial_{t_i}$.
\item $\lambda_i^X$ is a null-homotopy for the action of $t_i$ on $\AA(X,X)$ for each $X \in \AA, 1 \le i \le n$.
\item We choose for $X \in \AA$ an isomorphism of $\nZ_2$-graded $R$-modules
\[
X \cong \coeff{X} \otimes R
\]
where $\coeff{X}$ is a finitely generated free $\nZ_2$-graded $k$-module. Hence
\be
\AA(X,Y) = \Hom_R(X,Y) \cong \Hom_k(\coeff{X},\coeff{Y}) \otimes R\,.\label{eq:chosenCiso}
\ee
\end{itemize}
\end{setup}

\begin{remark} By the hypothesis that $W$ is a potential, the sequence $\bold{t} = (\partial_{x_1} W, \ldots, \partial_{x_n} W)$ satisfies the hypotheses and we may choose $\lambda_i^X$ to be the operator $\partial_{x_i}(d_X)$ defined by choosing a homogeneous basis for $X$ and differentiating entry-wise the matrix $d_X$ in that basis. However some choices of $\bold{t}$ and the $\lambda_i^X$ may be better than others, in the sense that they lead to simpler Feynman rules.
\end{remark}

To explain the construction of the higher operations on $\BB$, it is convenient to switch to an alternative presentation of the spaces $\AA(X,Y), \BB(X,Y)$. Consider the following $\nZ_2$-graded $k$-modules, where the grading comes only from $\bigwedge F_\theta$ and the Hom-space:
\begin{gather*}
\AA'(X,Y) = R/I \otimes \Hom_k(\widetilde{X},\widetilde{Y}) \otimes k\llbracket \bold{t} \rrbracket\,,\\
\AA'_\theta(X,Y) = \bigwedge F_\theta \otimes R/I \otimes \Hom_k(\widetilde{X},\widetilde{Y}) \otimes k\llbracket \bold{t} \rrbracket\,, \\
\BB'(X,Y) = R/I \otimes \Hom_k(\widetilde{X},\widetilde{Y})\,.
\end{gather*}
Using \eqref{eq:chosenCiso} there is an isomorphism of $\nZ_2$-graded $R$-modules $\BB'(X,Y) \cong \BB(X,Y)$. By Lemma \ref{prop_algtube} there is a $k\llbracket \bold{t} \rrbracket$-linear isomorphism $\sigmastar: R/I \otimes k \llbracket \bold{t} \rrbracket \lto \widehat{R}$ and combined with \eqref{eq:chosenCiso} this induces an isomorphism of $\nZ_2$-graded $k\llbracket \bold{t} \rrbracket$-modules
\be\label{eq:transfer_iso_intro}
\xymatrix@C+2pc{
\sigmastar: R/I \otimes \Hom_k(\coeff{Y},\coeff{X}) \otimes k\llbracket \bold{t} \rrbracket \ar[r]^-{\cong} & 
\Hom_R(Y,X) \otimes_R \widehat{R}
}
\ee
which induces an isomorphism of $\nZ_2$-graded $k\llbracket \bold{t} \rrbracket$-modules
\[
\xymatrix@C+2pc{
\AA'_\theta(X,Y) \ar[r]_-{\cong}^{\sigmastar} & \AA_\theta(X,Y)
}\,.
\]
Hence there are induced isomorphisms $\HH_{\AA'_\theta} \cong \HH_{\AA_\theta}$ and $\HH_{\BB} \cong \HH_{\BB'}$. Using these identifications we transfer operators on $\AA_\theta,\BB$ to their primed cousins, usually without a change in notation. For example we write $d_\AA$ for
\[
\xymatrix@C+2pc{
\HH_{\AA'_\theta} \ar[r]^-{\sigmastar}_-{\cong} & \HH_{\AA_\theta} \ar[r]^-{d_{\AA}} & \HH_{\AA_\theta} \ar[r]^-{(\sigmastar)^{-1}}_-{\cong} & \HH_{\AA'_\theta}
}\,.
\]
This map is the differential in a $k\llbracket \bold{t} \rrbracket$-linear DG-category structure on $\AA'_\theta$, with the forward suspended composition $r_2$ in this DG-structure given by
\[
\xymatrix@C+2pc{
\HH_{\AA'_\theta}[1] \otimes_Q \HH_{\AA'_\theta}[1] \ar[r]^-{\cong} & \HH_{\AA_\theta}[1] \otimes_Q \HH_{\AA_\theta}[1] \ar[r]^-{r_2} & \HH_{\AA_\theta}[1] \ar[r]^-{\cong} & \HH_{\AA'_{\theta}}[1]
}
\]
where the unlabelled isomorphisms are $\sigmastar \otimes \sigmastar$ and $(\sigmastar)^{-1}$. Going forward when we refer to $\AA'_\theta$ as a DG-category this structure is understood. Finally the tensor product of the inclusions $k \subset \bigwedge F_\theta$ and $k \subset k\llbracket \bold{t} \rrbracket$, respectively the projections $\bigwedge F_\theta \lto k$ and $k \llbracket \bold{t} \rrbracket \lto k$ define $k$-linear maps $\sigma$ and $\pi$ as in the diagram
\[
\xymatrix@C+2pc{
R/I \otimes \Hom_k(\widetilde{X},\widetilde{Y}) \ar@<1ex>[r]^-{\sigma} & \bigwedge F_\theta \otimes R/I \otimes \Hom_k(\widetilde{X},\widetilde{Y}) \otimes k \llbracket \bold{t} \rrbracket\ar@<1ex>[l]^-{\pi}
}
\]
and hence degree zero $k$-linear maps
\[
\xymatrix@C+3pc{
\HH_{\BB'} \ar@<1ex>[r]^-{\sigma} & \HH_{\AA'_\theta}\ar@<1ex>[l]^-{\pi}
}\,.
\]

\begin{definition}\label{defn:atiyah_class} The \emph{critical Atiyah class} of $\AA$ is the operator on $\HH_{\AA'_\theta}$ given by
\[
\vAt_{\AA} = [ d_{\AA}, \nabla ] = d_{\AA} \nabla + \nabla d_{\AA}
\]
where $\nabla = \sum_{i=1}^n \theta_i \partial_{t_i}$ is the connection of Section \ref{section:formaltub}. This is a closed $k\llbracket \bold{t} \rrbracket$-linear operator, independent up to $k$-linear homotopy of the choice of connection.
\end{definition}

We call $\vAt_{\AA}$ the \emph{critical} Atiyah class since it is defined using the connection $\nabla$, which is a kind of derivative in the directions normal to the critical locus, and some name seems useful to distinguish $\vAt_{\AA}$ from various other Atiyah classes also playing a role in the theory of matrix factorisations, for example the associative Atiyah classes of \cite{lgdual}.

\begin{definition}\label{definition:zeta} Since $\HH_{\AA'_\theta}$ is a module over $\bigwedge F_\theta \otimes k\llbracket \bold{t} \rrbracket$ we may define
\be
(\boldsymbol{\theta}, \bold{t}) \HH_{\AA'_\theta} \subseteq \HH_{\AA'_\theta}
\ee
where $(\boldsymbol{\theta}, \bold{t})$ is the two-sided ideal spanned by the $\theta_i, t_j$. We define the $k$-linear operator
\begin{gather*}
\zeta: (\boldsymbol{\theta}, \bold{t}) \HH_{\AA'_\theta} \lto (\boldsymbol{\theta}, \bold{t}) \HH_{\AA'_\theta}\\
\zeta\big( \omega \otimes z \otimes \alpha \otimes f \big) = \sum_{\delta \in \mathbb{N}^n} \frac{1}{|\omega| + |\delta|} \omega \otimes z \otimes \alpha \otimes f_\delta t^\delta
\end{gather*}
for a homogeneous $\theta$-form $\omega$ and $f = \sum_{\delta \in \mathbb{N}^n} f_\delta t^\delta \in k\llbracket \bold{t} \rrbracket$. Evaluated on polynomial $f$ this is the inverse of the grading operator for \emph{virtual degree} which is a $\nZ$-grading $\Vert - \Vert$ in which $\Vert\theta_i\Vert = \Vert t_i \Vert = 1$ for $1 \le i \le n$ and $\Vert z \Vert = \Vert \alpha \Vert = 0$ for $z \in R/I, \alpha \in \Hom_k(\widetilde{X},\widetilde{Y})$.
\end{definition}

\begin{definition}\label{defn:important_operators} We introduce the following operators:
\begin{align*}
\sigma_\infty &= \sum_{m \ge 0} (-1)^m (\zeta \vAt_{\AA})^m \sigma: \HH_{\BB'} \lto \HH_{\AA'_\theta}\\
\phi_\infty &= \sum_{m \ge 0} (-1)^m (\zeta \vAt_{\AA})^m \zeta \nabla: \HH_{\AA'_\theta} \lto \HH_{\AA'_\theta}\\
\delta &= \sum_{i=1}^n \lambda^\bullet_i \theta_i^*: \HH_{\AA'_\theta} \lto \HH_{\AA'_\theta}
\end{align*}
where $\lambda_i^\bullet$ acts on $\Hom_R(X,Y)$ by post-composition
\[
\lambda_i^\bullet(\alpha) = \lambda_i^Y \circ \alpha\,.
\]
Note that the sums involved are all finite, since $\vAt_{\AA}$ has positive $\theta$-degree. The $\nZ_2$-degrees of these operators are $|\delta| = |\sigma_\infty| = 0$ and $|\phi_\infty| = 1$.
\end{definition}

We now have the notation to state the main theorem. Let $\cat{BT}_k$ denote the set of all valid plane binary trees with $k$ inputs (in the sense of Appendix \ref{section:trees}). Given such a tree $T$, we add some additional vertices and then \emph{decorate} the tree by inserting operators at each vertex. The \emph{denotation} of such an operator decorated tree is defined by reading the tree as a ``flowchart'' with inputs inserted at the leaves and the output read off from the root. For example the tree $T$ in Figure \ref{fig:opdectree} has for its denotation the operator
\begin{figure}
\begin{center}
\includegraphics[scale=0.35]{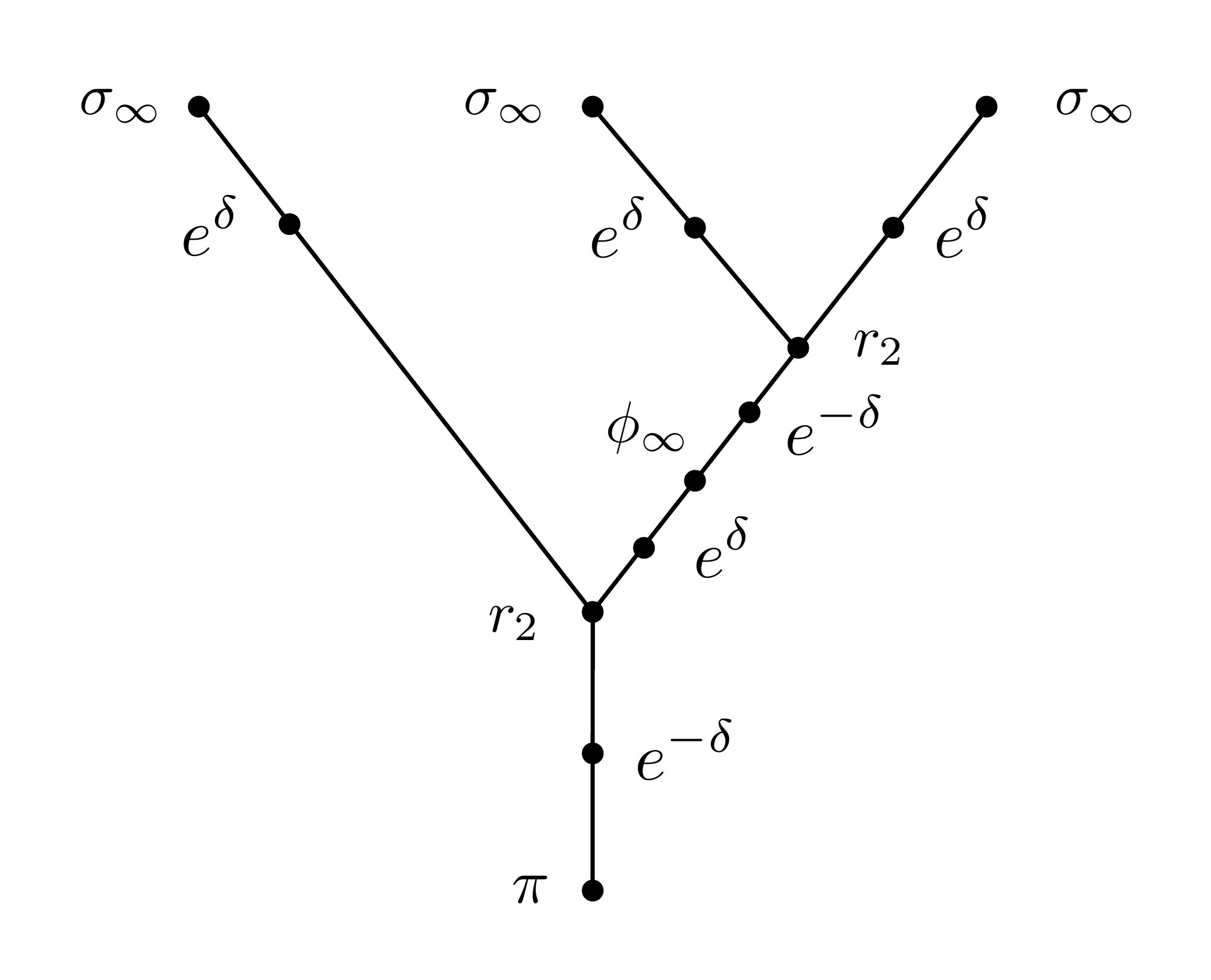}
\end{center}
\centering
\caption{Example of an operator decorated tree.}\label{fig:opdectree}
\end{figure}
\be\label{eq:explicit_tree_operator}
\pi e^{-\delta} r_2\Big( e^{\delta}\sigma_\infty \otimes e^{\delta} \phi_\infty e^{-\delta} r_2\Big( e^{\delta} \sigma_\infty \otimes e^{\delta} \sigma_\infty \Big) \Big)\,.
\ee
See Appendix \ref{section:trees} for our conventions on trees, decorations and denotations. We note that these denotations involve Koszul signs when evaluated, arising from the $\nZ_2$-degree (with respect to the tilde grading) of the involved operators (recall for example that $r_2$ is odd). See Section \ref{section:ainfcat} for the definition of the ring $Q$ and the $Q$-bimodule structure on $\HH_{\BB}$, and note that we write $e_i(T)$ for the number of internal edges in a tree $T$.

\begin{theorem}\label{theorem:main_ainfty_products} Define the odd $Q$-bilinear map
\be\label{eq:rho_kintermsoftrees}
\rho_k = \sum_{T \in \cat{BT}_k} (-1)^{e_i(T)} \rho_T : \HH_{\BB'}[1]^{\otimes_Q k} \lto \HH_{\BB'}[1]
\ee
where $\rho_T$ is the denotation of the decoration with coefficient ring $Q$ which assigns $\HH_{\AA'_\theta}[1]$ to every leaf and $\HH_{\BB'}[1]$ to every edge, and to
\begin{center}
\begin{itemize}
\item \textbf{inputs:} $e^\delta \sigma_\infty$
\item \textbf{internal edges:} $e^\delta \phi_\infty e^{-\delta}$
\item \textbf{internal vertices:} $r_2$
\item \textbf{root:} $\pi e^{-\delta}$
\end{itemize}
\end{center}
Then $( \BB, \rho = \{ \rho_k \}_{k \ge 1} )$ is a strictly unital $A_\infty$-category and there are $A_\infty$-functors
\[
\xymatrix@C+3pc{
\AA_\theta \ar@<1ex>[r]^F & \BB \ar@<1ex>[l]^G
}
\]
and an $A_\infty$-homotopy $G \circ F \simeq 1_{\AA_{\theta}}$. 
\end{theorem}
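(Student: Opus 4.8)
The plan is to obtain $(\BB,\rho)$ and the functors $F,G$ from the homological perturbation lemma, applied to a strong deformation retract (SDR) of the complex underlying $\AA'_\theta$ onto $\HH_{\BB'}$, followed by the homotopy transfer theorem for the composition $r_2$. Since $\sigmastar$ identifies $\AA'_\theta$ with $\AA_\theta$ as $k\llbracket\bold{t}\rrbracket$-linear DG-categories it suffices to work with $\AA'_\theta$ and transport the output along $\sigmastar$ at the end. The first ingredient is a \emph{Koszul conjugation}: since each $\lambda_i^Y$ is a null-homotopy for $t_i$ and is a morphism in $\AA$, hence $R$-linear, one gets $[d_\AA,\lambda_i^\bullet]=t_i\cdot(-)$, whence $[d_\AA,\delta]=\sum_{i=1}^n t_i\theta_i^*=:K$, the Koszul differential of $\bold{t}$ on $\bigwedge F_\theta\otimes k\llbracket\bold{t}\rrbracket$; as $K$ commutes with $\delta$ this gives $e^{-\delta}d_\AA e^{\delta}=d_\AA+K$. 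Thus $e^{\delta}$ is an isomorphism of complexes from $(\HH_{\AA'_\theta},d_\AA+K)$ to $(\HH_{\AA'_\theta},d_\AA)$ fixing $\HH_{\BB'}$ pointwise (as $\delta$ kills virtual degree zero), and it is enough to produce an SDR of $(\HH_{\AA'_\theta},d_\AA+K)$ onto $\HH_{\BB'}$ and then conjugate.

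Next I would establish the unperturbed SDR --- the \emph{formal Poincar\'e lemma} --- taking $K$ as the unperturbed differential. The key computation, $[K,\nabla]=\Vert-\Vert$ (the virtual degree operator), follows from the contraction/wedge and derivative identities of Section~\ref{section:ainfcat}; this is essentially Lemma~\ref{prop_algtube}, and is the one place where the hypotheses that $k$ is a $\nQ$-algebra and that $R/I$ is $k$-free (so $\widehat{R}\cong R/I\otimes k\llbracket\bold{t}\rrbracket$ via $\sigmastar$) really enter. It gives $[K,\zeta\nabla]=1-\sigma\pi$ on $\HH_{\AA'_\theta}$, and the side conditions $(\zeta\nabla)^2=0$, $\pi(\zeta\nabla)=0$, $(\zeta\nabla)\sigma=0$, $\pi\sigma=1$ are immediate from $\nabla^2=0$, $\nabla\sigma=0$, the fact that $\zeta$ respects virtual degree, and degree bookkeeping; so $(\sigma,\pi,\zeta\nabla)$ is an SDR of $(\HH_{\AA'_\theta},K)$ onto $(\HH_{\BB'},0)$. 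Now I would perturb $K\rightsquigarrow K+d_\AA$ --- one checks $(K+d_\AA)^2=0$ directly, it being conjugate to $d_\AA^2=0$. Because $\nabla$ raises $\theta$-degree by one while $d_\AA$ preserves it, $(\zeta\nabla d_\AA)^m$ vanishes for $m>n$, so the perturbation series are finite and the perturbation lemma applies with no convergence issue. Using $\nabla\sigma=0$ and $\nabla\zeta\nabla=0$ one identifies $(\zeta\vAt_\AA)^m\sigma=(\zeta\nabla d_\AA)^m\sigma$ and $(\zeta\vAt_\AA)^m\zeta\nabla=(\zeta\nabla d_\AA)^m\zeta\nabla$, so the perturbed inclusion is exactly $\sigma_\infty$ and the perturbed homotopy exactly $\phi_\infty$; the perturbed projection is again $\pi$, because $\pi d_\AA\zeta\nabla=0$ (same $\theta$-degree argument) annihilates every correction term. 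Conjugating by $e^{\delta}$ yields the SDR of $(\HH_{\AA'_\theta},d_\AA)$ onto $\HH_{\BB'}$ with inclusion $e^{\delta}\sigma_\infty$, projection $\pi e^{-\delta}$, homotopy $e^{\delta}\phi_\infty e^{-\delta}$ and transferred differential $\rho_1=\pi e^{-\delta}d_\AA e^{\delta}\sigma_\infty$.

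I would then feed this SDR and the composition $r_2$ into the homotopy transfer theorem in the tree formalism of Appendix~\ref{section:trees} (proved in Appendix~\ref{section:proofs}): it produces an $A_\infty$-structure $\rho=\{\rho_k\}$ on $\HH_{\BB'}$, with $\rho_k$ the sum over plane binary trees with $k$ leaves carrying $e^{\delta}\sigma_\infty$ at the leaves, the propagator $e^{\delta}\phi_\infty e^{-\delta}$ on internal edges, $r_2$ at the binary internal vertices, $\pi e^{-\delta}$ at the root, and the auxiliary vertices prescribed in Appendix~\ref{section:trees} carrying $r_1=d_\AA$ (so that $\rho_1$ is the transferred differential $\pi e^{-\delta}d_\AA e^{\delta}\sigma_\infty$), together with the Koszul and tree signs, which collect into the global factor $(-1)^{e_i(T)}$; this is precisely \eqref{eq:rho_kintermsoftrees}. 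The same theorem gives the $A_\infty$-functors $G$ (extending the inclusion $e^{\delta}\sigma_\infty$) and $F$ (extending the projection $\pi e^{-\delta}$), with $F\circ G=1_{\BB}$ strictly and an $A_\infty$-homotopy $G\circ F\simeq 1$ built from the propagator; transporting along $\sigmastar$ gives the functors $\AA_\theta\rightleftarrows\BB$ in the statement. Strict unitality is inherited: the SDR respects units, since $\delta(1_X)=0$, $\nabla(1_X)=0$ and $d_{\Hom}(1_X)=0$ give $e^{\delta}\sigma_\infty(\overline{1_X})=1_X$, $e^{\delta}\phi_\infty e^{-\delta}(1_X)=0$ and $\pi e^{-\delta}(1_X)=\overline{1_X}$, so $\overline{1_X}$ is a strict unit for $\rho$ and $F,G$ are unital.

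The only step carrying genuine analytic content is the formal Poincar\'e lemma, and that has already been isolated as Lemma~\ref{prop_algtube}. The hard part is the last one: verifying that the perturbation lemma and the homotopy transfer theorem, run in the $\nZ_2$-graded suspended conventions where Koszul signs proliferate, reproduce \emph{exactly} the decorated-tree formula \eqref{eq:rho_kintermsoftrees} --- in particular the precise placement of the auxiliary vertices and the exact sign $(-1)^{e_i(T)}$. This sign- and combinatorics-heavy bookkeeping is what the detailed proof in Appendix~\ref{section:proofs} must carry out; the remaining assertions (the $A_\infty$-relations for $\rho$, the functor axioms for $F,G$, and the homotopy $G\circ F\simeq 1$) then follow formally from the general machinery.
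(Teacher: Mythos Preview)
Your proposal is correct and follows essentially the same route as the paper's proof in Appendix~\ref{section:proofs}: the Koszul SDR $(\sigma,\pi,\zeta\nabla)$ for $K=\sum_i t_i\theta_i^*$, perturbation by $d_\AA$ to obtain $(\sigma_\infty,\pi,\phi_\infty)$, conjugation by $e^{\delta}$, and then homotopy transfer \`a la \cite{lazaroiu,markl_transfer}; your rewriting $(\zeta\nabla d_\AA)^m\sigma=(\zeta\vAt_\AA)^m\sigma$ via $\nabla\sigma=0$, $\nabla\zeta\nabla=0$ is exactly the paper's argument at \eqref{eq:rewritten_sigma_infty}, and your unitality check matches the paper's verification that $\widehat{H}(u_X)=0$. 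One small slip: Appendix~\ref{section:trees} does not insert ``auxiliary vertices carrying $r_1$'' --- since $\AA_\theta$ is a DG-category only binary trees with $r_2$ at internal vertices appear for $k\ge 2$, and $\rho_1$ is simply the transferred differential $\overline{d_{\Hom}}$ coming directly from the SDR, not from the tree sum.
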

\begin{proof}
The full details are given in Appendix \ref{section:proofs}, but in short this is the usual transfer of $A_\infty$-structure via homological perturbation applied to a particular choice of strong deformation retract arising from the connection $\nabla$ and the isomorphism $e^{\delta}$.
\end{proof}

The projector $e$ of \eqref{eq:projector_e} can be written in terms of creation and annihilation operators
\[
e = \theta_n^* \cdots \theta_1^* \theta_1 \cdots \theta_n: \bigwedge F_\theta \lto \bigwedge F_\theta
\]
where $\theta_i$ denotes the operator $\theta_i \wedge (-)$ and $\theta_i^*$ denotes contraction $\theta_i^* \,\lrcorner\, (-)$. This is a morphism of algebras and induces a functor of DG-categories $e: \AA_\theta \lto \AA_\theta$.

\begin{definition} Let $E$ be the following composite of $A_\infty$-functors 
\[
\xymatrix@C+2pc{
\BB \ar[r]^-G & \AA_\theta \ar[r]^-{e} & \AA_\theta \ar[r]^-F & \BB
}\,.
\]
\end{definition}

\begin{corollary}\label{corollary:idempotent_finite_model} The tuple $(\BB, \rho, E)$ is an idempotent finite $A_\infty$-model of $\AA \otimes_R \widehat{R}$.
\end{corollary}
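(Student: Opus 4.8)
The plan is to present $\AA \otimes_R \widehat{R}$ as a strict DG-retract of $\AA_\theta$ and then transport the data of Theorem \ref{theorem:main_ainfty_products} along it. The DG-functor $e: \AA_\theta \to \AA_\theta$ induced by \eqref{eq:projector_e} factors as $e = j \circ q$ through DG-functors
\[
\xymatrix@C+2pc{
\AA \otimes_R \widehat{R} \ar@<1ex>[r]^-{j} & \AA_\theta \ar@<1ex>[l]^-{q}
}
\]
each the identity on objects: $j$ includes $\AA(X,Y) \otimes_R \widehat{R}$ as the $\theta$-degree-zero summand of $\AA_\theta(X,Y)$, and $q$ applies the projector $\bigwedge F_\theta \to k\cdot 1$ to the $\theta$-component. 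Since $\bigwedge F_\theta$ is generated in $\theta$-degree one with $\theta_i^2 = 0$, a product of two homogeneous $\theta$-forms lies in $\theta$-degree zero only when both factors do; from this one checks at once that $j$ and $q$ are DG-functors and that $q \circ j = 1_{\AA \otimes_R \widehat{R}}$, while $j \circ q = e$ by construction.

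Set $I := F \circ j: \AA \otimes_R \widehat{R} \to \BB$ and $P := q \circ G: \BB \to \AA \otimes_R \widehat{R}$, which are $A_\infty$-functors as composites of $A_\infty$-functors (any DG-functor being an $A_\infty$-functor whose higher Taylor components vanish). The category $\BB$ is Hom-finite, as already observed in Section \ref{section:the_model}: $\BB(X,Y) = R/I \otimes_R \Hom_R(X,Y)$ is finitely generated free over $k$ since $R/I$ is finitely generated free over $k$ and $\Hom_R(X,Y)$ is finitely generated free over $R$. And $E$ is an $A_\infty$-functor by definition, being $F \circ e \circ G$.

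It remains to produce the two $A_\infty$-homotopies. Because the $A_\infty$-composite of two DG-functors, viewed as $A_\infty$-functors, is their ordinary composite, we have $j \circ q = e$ as $A_\infty$-functors, so by associativity of $A_\infty$-functor composition
\[
I \circ P = F \circ j \circ q \circ G = F \circ e \circ G = E\,,
\]
and in particular $I \circ P \simeq E$. For the other homotopy we whisker the $A_\infty$-homotopy $G \circ F \simeq 1_{\AA_\theta}$ of Theorem \ref{theorem:main_ainfty_products} on the left by $q$ and on the right by $j$; since $A_\infty$-homotopy of $A_\infty$-functors is stable under pre- and post-composition by a fixed $A_\infty$-functor --- a standard fact of the homotopy theory of $A_\infty$-categories, see e.g. \cite{seidel} --- this yields
\[
P \circ I = q \circ (G \circ F) \circ j \;\simeq\; q \circ 1_{\AA_\theta} \circ j = q \circ j = 1_{\AA \otimes_R \widehat{R}}\,.
\]
Beyond Theorem \ref{theorem:main_ainfty_products} and the two identities above, the only ingredient is the stability of $A_\infty$-homotopies under whiskering, which I expect to be the sole point needing care; the rest is formal.
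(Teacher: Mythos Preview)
Your proof is correct and follows essentially the same approach as the paper's: both factor $e$ through an inclusion/projection pair between $\AA \otimes_R \widehat{R}$ and $\AA_\theta$, set $I = F \circ (\text{inclusion})$ and $P = (\text{projection}) \circ G$, and obtain $PI \simeq 1$ by whiskering the homotopy $GF \simeq 1$ while $IP = E$ holds on the nose. The paper's version is terser---it writes the whiskering step simply as $pGFi \simeq pi = 1$---but the argument is the same.
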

\begin{proof}
Consider the diagram
\[
\xymatrix@C+3pc{
\AA \otimes_R \widehat{R} \ar@<1ex>[r]^-{i} & \AA_\theta \ar@<1ex>[l]^-{p} \ar@<1ex>[r]^F & \BB \ar@<1ex>[l]^G
}
\]
where $i$ is the natural inclusion and $p$ is the projection, so that $p \circ i = 1$ and $i \circ p = e$. These are both DG-functors. We define $I = F \circ i$ and $P = p \circ G$ as $A_\infty$-functors. Since we have an $A_\infty$-homotopy $G \circ F \simeq 1$ we have an $A_\infty$-homotopy
\[
P I = p G F i \simeq p i = 1
\]
and by definition $E = I \circ P$.
\end{proof}

At a cohomological level the pushforward of matrix factorisations is expressed in terms of residues and null-homotopies $\lambda$, see for example the results in \cite[\S 11.2]{pushforward} on Chern characters. These residues can be understood as traces of products of commutators with the connection $\nabla$ \cite[Proposition B.4]{pushforward}. The results just stated extend this ``closed sector'' or cohomological level analysis of pushforwards via residues to the ``open sector'' or categorical level, where the supertraces are removed and the higher operations of the idempotent finite model $\BB$ are described directly in terms of the commutators $\vAt_{\AA} = [d_{\AA}, \nabla]$ and homotopies $\lambda$. Moreover, these formulas arise from homological perturbation applied to a kind of tubular neighborhood of the critical locus, so it seems natural to interpret $(\BB, E)$ as a kind of ``$A_\infty$-categorical residue'' of $\AA$ along the subscheme $\Spec(R/I)$.
\vspace{0.2cm}

Recall that the purpose of the idempotent $E$ is that it encodes the information necessary to ``locate'' $\AA$ within the larger object $\BB$. The information in the lowest piece $E_1$ of this $A_\infty$-idempotent is the simplest, as it locates $\AA$ as a subcomplex within $\BB$. 

\begin{definition}\label{defn:gamma_anddagger} Let $\gamma_i, \gamma_i^\dagger$ be the $k$-linear cochain maps
\begin{gather*}
\xymatrix@C+2pc{
\HH_{\BB'} \ar[r]^-{G_1} & \HH_{\AA'_{\theta}} \ar[r]^-{\theta_i^*} & \HH_{\AA'_{\theta}} \ar[r]^-{F_1} & \HH_{\BB'}
}\\
\xymatrix@C+2pc{
\HH_{\BB'} \ar[r]^-{G_1} & \HH_{\AA'_{\theta}} \ar[r]^-{\theta_i} & \HH_{\AA'_{\theta}} \ar[r]^-{F_1} & \HH_{\BB'}\,.
}
\end{gather*}
respectively.
\end{definition}

\begin{theorem}\label{theorem:homotopy_clifford} There is a $k$-linear homotopy
\be
E_1 \simeq \gamma_n \cdots \gamma_1 \gamma_1^\dagger \cdots \gamma_n^\dagger
\ee
and $k$-linear homotopies $\gamma_i \simeq \vAt_i$ and
\be
\gamma_i^\dagger \simeq -\lambda_i - \sum_{m \ge 1} \sum_{q_1,\ldots,q_m} \frac{1}{(m+1)!} \big[ \lambda_{q_m}\,, \big[ \lambda_{q_{m-1}}, \big[ \cdots [ \lambda_{q_1}, \lambda_i ] \cdots \big] \At_{q_1} \cdots \At_{q_m}
\ee
where $\vAt_i = [ d_{\AA}, \partial_{t_i} ]$ denotes the $i$th component of the Atiyah class $\vAt_{\AA}$, viewed as an odd closed $k$-linear operator on $\HH_{\BB'}$.
\end{theorem}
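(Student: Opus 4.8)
The plan is to trace the three homotopies back through the homological perturbation machinery of Theorem \ref{theorem:main_ainfty_products}, exploiting the fact that $E_1$ is by definition the composite $F_1 \circ e \circ G_1$ where $e = \theta_n^* \cdots \theta_1^* \theta_1 \cdots \theta_n$. The key structural observation is that $e$ already factors as a product of the creation operators $\theta_1,\ldots,\theta_n$ followed by the annihilation operators $\theta_n^*,\ldots,\theta_1^*$ (with appropriate ordering), so the main task is to commute the intervening $F_1, G_1$ past this product and recognize the resulting pieces as $\gamma_i, \gamma_i^\dagger$. Concretely, one writes
\[
E_1 = F_1 \theta_n^* \cdots \theta_1^* \theta_1 \cdots \theta_n G_1
\]
and inserts $G_1 F_1 \simeq 1_{\BB}$ (which holds on the nose or up to homotopy from the $A_\infty$-homotopy $G \circ F \simeq 1$, restricted to the linear term) between each adjacent pair of Clifford operators. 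Each insertion is a homotopy, and chaining $n-1$ such insertions together with a control on the error terms yields $E_1 \simeq (F_1 \theta_n^* G_1)(F_1 \theta_{n-1}^* G_1) \cdots (F_1 \theta_1^* G_1)(F_1 \theta_1 G_1) \cdots (F_1 \theta_n G_1) = \gamma_n \cdots \gamma_1 \gamma_1^\dagger \cdots \gamma_n^\dagger$. Here one must be a little careful: the ordering of the $\gamma$'s versus the $\gamma^\dagger$'s and whether the product reads left-to-right or right-to-left has to be matched against the stated formula, and the $\nZ_2$-signs (tilde grading) incurred when the odd operators $\theta_i$ pass each other must be checked to be absorbed correctly.

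For the individual homotopies $\gamma_i \simeq \vAt_i$ and the formula for $\gamma_i^\dagger$, the plan is to use the explicit form of $F_1$ and $G_1$ coming from the strong deformation retract. From the structure of Theorem \ref{theorem:main_ainfty_products}, the linear parts are $G_1 = e^\delta \sigma_\infty$ (restricted to the appropriate summand) and $F_1 = \pi e^{-\delta}$, modulo the identifications via $\sigmastar$. Thus
\[
\gamma_i \simeq \pi e^{-\delta} \theta_i \, e^\delta \sigma_\infty, \qquad
\gamma_i^\dagger \simeq \pi e^{-\delta} \theta_i^* \, e^\delta \sigma_\infty.
\]
Since $\sigma_\infty = \sum_m (-1)^m (\zeta \vAt_\AA)^m \sigma$ and $\pi$ projects away all positive $\theta$-degree and positive $\bold{t}$-degree, the operator $\pi (\zeta \vAt_\AA)^m \sigma$ survives only when the total virtual degree is balanced, which picks out specific terms. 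For $\gamma_i$, the $\theta_i$ in the middle raises $\theta$-degree by one, so to land back in $\HH_{\BB'}$ after $\pi$ we need exactly one factor of $\vAt_\AA$ (which carries $\theta$-degree one and lowers $\bold{t}$-degree), and extracting its $\theta_i$-component gives $\vAt_i = [d_\AA, \partial_{t_i}]$ up to the $e^{\pm\delta}$ conjugation, which contributes the $\lambda$-commutator corrections — but for $\gamma_i$ these turn out to be homotopically trivial, yielding $\gamma_i \simeq \vAt_i$. For $\gamma_i^\dagger$, the $\theta_i^*$ lowers $\theta$-degree, so the leading contribution is the bare $-\lambda_i$ (coming from the $\delta = \sum_j \lambda_j^\bullet \theta_j^*$ term inside $e^{\pm\delta}$ hitting the $\theta_i$ slot, with the sign from the expansion of $e^{-\delta}$), and the higher terms are precisely the nested-commutator corrections $\frac{1}{(m+1)!}[\lambda_{q_m},[\cdots,[\lambda_{q_1},\lambda_i]\cdots] \At_{q_1}\cdots\At_{q_m}$ that arise from expanding $e^{-\delta}(\cdots)e^{\delta}$ and organizing the Baker--Campbell--Hausdorff-type combinatorics, with the factorials coming from the exponential series.

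\textbf{The main obstacle} I expect is the bookkeeping of the $e^{\pm\delta}$-conjugation combined with the $\sigma_\infty$-expansion: disentangling which terms of $\sum_m (-1)^m(\zeta\vAt_\AA)^m$ contribute after applying $\pi$ (a virtual-degree count), and then reorganizing the resulting sum of iterated commutators of the $\lambda_i$ with the $\At_j$ into the closed form with the correct $\frac{1}{(m+1)!}$ coefficients. This is essentially a BCH/Magnus-type computation and the signs (tilde grading of $\theta_i^*$, which is odd) must be threaded through carefully; a clean way to handle it is to first establish the identity $e^{-\delta} \vAt_\AA e^{\delta} = \vAt_\AA + [\text{corrections}]$ and $e^{-\delta}\theta_i^* e^{\delta} = \theta_i^* + (\text{lower order in }\theta)$ as operator identities on $\HH_{\AA'_\theta}$, push everything to the point where $\pi$ can be applied, and only then collect terms. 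The verification that the higher-order pieces of $\gamma_i$ (as opposed to $\gamma_i^\dagger$) vanish up to homotopy should follow from the fact that $\vAt_i$ is already closed and the correction terms are exact for degree reasons, but this requires checking that no genuine contribution survives the $\pi$-projection — a point worth stating carefully rather than glossing.
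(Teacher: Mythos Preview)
Your overall architecture matches the paper's proof: factor $e = \theta_n^* \cdots \theta_1^* \theta_1 \cdots \theta_n$, insert the homotopy $G_1 F_1 \simeq 1$ between consecutive factors to produce the product of $\gamma$'s and $\gamma^\dagger$'s, and then analyse each $\gamma_i, \gamma_i^\dagger$ via the explicit forms $F_1 = \pi e^{-\delta}$ and $G_1 = e^{\delta}\sigma_\infty$. That part is fine.

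However, you have \emph{swapped} $\gamma_i$ and $\gamma_i^\dagger$. By Definition~\ref{defn:gamma_anddagger}, $\gamma_i = F_1 \theta_i^* G_1$ (annihilation in the middle) and $\gamma_i^\dagger = F_1 \theta_i G_1$ (creation in the middle), the opposite of what you wrote. This is not merely notational: your subsequent degree-counting (``$\theta_i$ raises $\theta$-degree by one, so we need exactly one factor of $\vAt_\AA$'') is incoherent as stated, since $\vAt_\AA$ also raises $\theta$-degree, and your explanation of where the bare $-\lambda_i$ comes from refers to $\theta_j^*$ hitting a $\theta_i$ that is not present in your version.

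The paper's route, once the swap is corrected, is cleaner than the one you sketch. The key simplification you are missing is the \emph{exact} operator identity
\[
e^{-\delta}\,\theta_i^*\,e^{\delta} = \theta_i^*,
\]
which holds because $\delta = \sum_j \lambda_j^\bullet \theta_j^*$ contains only annihilation operators and hence commutes with $\theta_i^*$. Thus $\gamma_i = \pi\,\theta_i^*\,\sigma_\infty$ on the nose (no $\lambda$-corrections at all), and then a direct virtual-degree count on $\sigma_\infty = \sum_m (-1)^m(\zeta\vAt_\AA)^m\sigma$ shows that $\pi\,\theta_{q_1}^*\cdots\theta_{q_m}^*\,\sigma_\infty$ equals $\tfrac{1}{m!}\sum_{\tau\in S_m}(-1)^{|\tau|}\At_{q_{\tau 1}}\cdots\At_{q_{\tau m}}$, which is homotopic to $\At_{q_1}\cdots\At_{q_m}$ since Atiyah classes anti-commute up to homotopy. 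For $\gamma_i^\dagger$ one instead computes $e^{-\delta}\,\theta_i\,e^{\delta}$ via iterated commutators with $\delta$ (this is where the BCH-type formula and the $\tfrac{1}{(m+1)!}$ coefficients enter, cf.\ \cite[Theorem~4.28]{cut}); the $\theta_i$ term itself contributes nothing after $\pi$, and the remaining terms $-\lambda_i - \sum \tfrac{1}{(m+1)!}[\lambda_{q_m},[\cdots,[\lambda_{q_1},\lambda_i]\cdots]\theta_{q_1}^*\cdots\theta_{q_m}^*$ combine with $\sigma_\infty$ via the identity just established to give the stated formula. You do not need to conjugate $\vAt_\AA$ by $e^{\pm\delta}$; conjugating $\theta_i$ is both easier and what actually produces the nested commutators.
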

\begin{proof}
This is essentially immediate from \cite{cut}, see Appendix \ref{section:proofs} for details.
\end{proof}

\subsection{Algorithms}\label{section:the_algorithms}

When $k$ is a field there are algorithms which compute the $A_\infty$-functors $I, P, E$ and the higher $A_\infty$-products $\rho$ in the sense that once we choose a $k$-basis for $R/I$ and homogeneous $R$-bases for the matrix factorisations, for each fixed $k \ge 1$ there is an algorithm computing the coefficients in the matrices $I_k, P_k, E_k, \rho_k$. We explain this algorithm in detail only for $\rho_k$ as the algorithms for $I_k, P_k, E_k$ are variations on the same theme using \cite{markl_transfer}.

The algorithm is implicit in the presentation of $\rho_k$ as the sum of denotations of operator decorated trees, provided we have algorithms for computing the section $\sigma$, Atiyah classes $\vAt_{\AA}$ and homotopies $\lambda_i$ as operators on $\HH_{\AA'_\theta}$. If $k$ is a field, then by choosing a Gr\"obner basis of the ideal $I$ we obtain such algorithms; see Remark \ref{remark:compute_rdelta} and Remark \ref{remark:grobner}. Over the course of Section \ref{section:towards} and Section \ref{section:feynman_diagram} we present the details of this algorithm in the case where the matrix factorisations are of Koszul type, using Feynman diagrams.

\begin{remark} For general $k$ the algorithmic content of the theory depends on the availability of a replacement for Gr\"obner basis methods. One important case where such methods are available is the example of potentials $W \in k[x_1,\ldots,x_n]$ with $k = \mathbb{C}[u_1,\ldots,u_v]$, using Gr\"obner systems \cite{weispfenning} and constructible partitions.
\end{remark}

\begin{remark} Finding a Hom-finite $A_\infty$-category $A_\infty$-homotopy-equivalent to $\AA$ is equivalent to \emph{splitting} the idempotent $E$ within Hom-finite $A_\infty$-categories. We do not know a general algorithm which performs this splitting. However, this can be done when we have the data of a chosen cohomological splitting, for example in the case of the endomorphism DG-algebra of the standard generator when $k$ is a field; see Section \ref{section:generator}.
\end{remark}

\section{Towards Feynman diagrams}\label{section:towards}

In this section we collect some technical lemmas needed in the presentation of the Feynman rules, in the next section. Throughout the conventions of Setup \ref{setup:overall} remain in force. See Appendix \ref{section:trees} for our conventions on trees, decorations and denotations. Given a binary plane tree $T$ we denote by $T'$ the \emph{mirror} of $T$, which is obtained by exchanging the left and right branch at every vertex. Associated to a decoration $D$ of $T$ is a mirror decoration $D'$ of $T'$. Given a plane tree $T$ decorated by $D$ as explained in Theorem \ref{theorem:main_ainfty_products} let $\operatorname{eval}_{D'}$ be the mirror decoration evaluated without Koszul signs (Definition \ref{defn:evaluation_tree}).

\begin{lemma}\label{prop:replacer2} We have
\be\label{eq:proprhoT}
\rho_T( \beta_1, \ldots, \beta_k ) = (-1)^{\sum_{i < j} \widetilde{\beta}_i \widetilde{\beta}_j + \sum_i \widetilde{\beta}_i P_i + k + 1} \operatorname{eval}_{D'}( \beta_k, \ldots, \beta_1 )\,.
\ee
where $P_i$ is the number of times the path from the $i$th leaf in $T$ (counting from the left) enters a trivalent vertex as the right-hand branch on its way to the root, and $k+1$ is the number of internal vertices in $T$.
\end{lemma}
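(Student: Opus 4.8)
The plan is to reduce \eqref{eq:proprhoT} to a purely combinatorial sign computation by carefully tracking the Koszul signs that appear when the denotation $\rho_T$ is evaluated on a tensor $\beta_1 \otimes \cdots \otimes \beta_k$, and comparing them with the (sign-free) mirror evaluation $\operatorname{eval}_{D'}$ of the mirror tree $T'$ on the reversed tensor $\beta_k \otimes \cdots \otimes \beta_1$. I would proceed by induction on the number of internal vertices of $T$. The base case $k=1$ is the tree with a single leaf, decorated edge and root, where both sides agree up to the constant sign coming from the conventions; one checks $P_1 = 0$ and the stated sign reduces to $(-1)^{1+1}=1$ times the appropriate identity.

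First I would fix, once and for all, the bookkeeping: every operator decorating an internal vertex is $r_2$, which is odd in the tilde grading, and $e^{\delta}$, $e^{-\delta}$, $\sigma_\infty$, $\phi_\infty$ carry the $\nZ_2$-degrees recorded in Definition \ref{defn:important_operators} and surrounding text (so $e^{\pm\delta}$ and $\sigma_\infty$ are even, $\phi_\infty$ is odd, but importantly $\phi_\infty$ always appears on an internal edge where it is composed into the next $r_2$, so for sign-counting it is the oddness of the $r_2$'s that matters). The key mechanism is: when we apply a tensor product $\mathrm{id}^{\otimes a} \otimes r_2 \otimes \mathrm{id}^{\otimes b}$ (or more precisely the operator decorating a subtree) to a tensor, the Koszul rule produces $(-1)$ raised to the product of $\widetilde{r_2}=1$ (or the degree of whichever odd operator is being slotted in) with the sum of tilde-degrees of the inputs lying to its \emph{left} in the current tensor. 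Passing to the mirror tree exchanges "left" and "right" at every vertex, and reversing the input tuple $\beta_1,\ldots,\beta_k \mapsto \beta_k,\ldots,\beta_1$ is precisely what is needed to make the mirror evaluation see the same set of inputs but in the opposite order.

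The inductive step: write $T$ as a root attached to a top internal vertex with left subtree $T_L$ (inputs $\beta_1,\ldots,\beta_\ell$) and right subtree $T_R$ (inputs $\beta_{\ell+1},\ldots,\beta_k$). The denotation is $\pi e^{-\delta} r_2( e^\delta \phi_\infty e^{-\delta}(\text{denotation of }T_L) \otimes e^\delta \phi_\infty e^{-\delta}(\text{denotation of }T_R))$ — with the $\phi_\infty$ on an internal edge omitted if that subtree is a single leaf, in which case it is $e^\delta\sigma_\infty$ instead. Applying $r_2$ (odd) to this tensor produces a Koszul sign equal to $(-1)$ to the tilde-degree of the left argument, which after collecting is $\sum_{i\le\ell}\widetilde{\beta}_i$ times the oddness of the intervening operators; meanwhile the mirror $T'$ has $T_R'$ on the left and $T_L'$ on the right, and $\operatorname{eval}_{D'}$ is computed with \emph{no} signs at the vertex, so the discrepancy is exactly that Koszul sign, plus the two sign discrepancies supplied by the inductive hypothesis applied to $T_L$ and $T_R$. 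Now I collect: the $\sum_{i<j}\widetilde\beta_i\widetilde\beta_j$ term is built up from the cross-terms $\big(\sum_{i\le\ell}\widetilde\beta_i\big)\big(\sum_{j>\ell}\widetilde\beta_j\big)$ contributed at each vertex (a standard identity: summing these cross-terms over all vertices of a binary tree with leaves in left-to-right order recovers the full $\sum_{i<j}$); the $\sum_i \widetilde\beta_i P_i$ term is built up from exactly which inputs sit in a right subtree at each vertex, since $P_i$ counts right-branch descents on the path from leaf $i$ to the root; and the $+k+1$ is the count of internal vertices, one power of $-1$ contributed per $r_2$-vertex (together with the base-case constant). The bulk of the work is verifying that these three combinatorial generating identities hold — that the per-vertex contributions telescope correctly into $\sum_{i<j}\widetilde\beta_i\widetilde\beta_j + \sum_i\widetilde\beta_i P_i + k+1$.

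The main obstacle I anticipate is \emph{not} any deep idea but rather the discipline of the sign accounting: one must be scrupulous about (a) the distinction between the $\nZ_2$-degree and the tilde degree, and in particular that the inputs live in $\HH_{\AA'_\theta}[1]$ so all degrees are tilde degrees; (b) the fact that $e^\delta$ is even so it does not itself contribute Koszul signs when moved past inputs, which is what makes the conjugation $e^\delta\phi_\infty e^{-\delta}$ harmless for the \emph{overall} sign even though $\phi_\infty$ is odd — the oddness of $\phi_\infty$ on each internal edge is compensated because it is immediately post-composed into an $r_2$ evaluation in the parent, and I would want to state this cleanly as a lemma ("each internal edge together with the vertex above it contributes net oddness $1$, not $2$") before running the induction; and (c) correctly matching our tree conventions from Appendix \ref{section:trees} and Definition \ref{defn:evaluation_tree} for $\operatorname{eval}_{D'}$, so that "evaluated without Koszul signs" means precisely what the inductive argument needs. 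Once that lemma is isolated, the remainder is a clean induction, and the three generating-function identities for $\sum_{i<j}\widetilde\beta_i\widetilde\beta_j$, $\sum_i\widetilde\beta_i P_i$ and the vertex count are routine.
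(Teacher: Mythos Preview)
Your proposal misidentifies the source of the signs. You plan to track the Koszul signs that arise when evaluating the denotation $\langle D\rangle$ on a tensor (from applying $A\otimes B$ to $x\otimes y$ and picking up $(-1)^{|B|\,\widetilde x}$), and you expect these to assemble into $\sum_{i<j}\widetilde\beta_i\widetilde\beta_j+\sum_i\widetilde\beta_iP_i+(k+1)$. But those Koszul signs are all \emph{zero}. On any branch of $T$ the decorating operator is either $e^{\delta}\sigma_\infty$ (even) at a leaf, or $e^{\delta}\phi_\infty e^{-\delta}\circ r_2\circ(\cdots)$ on a subtree; since $|\phi_\infty|=|r_2|=1$ the pair contributes even parity, and by induction every branch operator has tilde degree $0$. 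This is exactly the observation the paper makes in its first displayed computation (the sign $a=\widetilde{\beta_1}(|\phi_\infty|+|r_2|)\equiv 0$). Your proposed lemma ``each internal edge together with the vertex above it contributes net oddness $1$, not $2$'' is therefore false: the net oddness is $0$, and if you carried out your plan you would find no signs at all from this mechanism.

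The actual source of the signs, which you do not mention, is the conversion formula \eqref{eq:mu2vsr2} in the form
\[
r_2(\alpha,\beta)=(-1)^{\widetilde\alpha\,\widetilde\beta+\widetilde\beta+1}\mu_2(\beta\otimes\alpha)\,,
\]
applied once at each internal vertex. The mirror decoration $D'$ has $\mu_2$ (not $r_2$) at internal vertices --- this is implicit in the paper's definition of ``mirror decoration'' and explicit in the worked example in the proof. With the Koszul signs already known to vanish, one simply replaces each $r_2$ by $\mu_2$ using the formula above: the $\widetilde\alpha\,\widetilde\beta$ term at the vertex where the paths from leaves $i$ and $j$ first meet gives $\widetilde\beta_i\widetilde\beta_j$; the $\widetilde\beta$ term contributes $\widetilde\beta_i$ once for each vertex where leaf $i$ enters on the right branch, yielding $\sum_i\widetilde\beta_iP_i$; and the $+1$ per vertex gives $k-1\equiv k+1\pmod 2$. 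Your three ``combinatorial generating identities'' are correct, but they describe the bookkeeping for the $r_2\to\mu_2$ conversion, not for Koszul signs. Once you make this correction your inductive framework works, though the paper's direct approach (verify on the $k=3$ example, then induct on height using only the conversion formula) is shorter.
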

\begin{proof}
Let us begin with the special case given in Figure \ref{fig:opdectree}, using
\be\label{eq:mu2vsr2_v2}
r_2( \beta_1, \beta_2 ) = (-1)^{\widetilde{\beta_1} \widetilde{\beta_2} + \widetilde{\beta_2} + 1}
\mu_2(\beta_2 \otimes \beta_1)
\ee
and the operator given in \eqref{eq:explicit_tree_operator} to compute that
\begin{align*}
\rho_T( \beta_1, \beta_2, \beta_3 ) &= \pi e^{-\delta} r_2\Big( e^{\delta}\sigma_\infty \otimes e^{\delta} \phi_\infty e^{-\delta} r_2\Big( e^{\delta} \sigma_\infty \otimes e^{\delta} \sigma_\infty \Big) \Big)( \beta_1 \otimes \beta_2 \otimes \beta_3 )\\
&= (-1)^{a} \pi e^{-\delta} r_2\Big( e^{\delta}\sigma_\infty(\beta_1) \otimes e^{\delta} \phi_\infty e^{-\delta} r_2\Big( e^{\delta} \sigma_\infty(\beta_2) \otimes e^{\delta} \sigma_\infty(\beta_3) \Big) \Big)
\end{align*}
where $a = \widetilde{\beta_1}( |\phi_\infty| + |r_2| ) \equiv 0$ gives the Koszul sign arising from moving the inputs ``into position''. Note that since $|\delta| = |\sigma_\infty| = 0$ and every $r_2$ decorating the tree $T$, except for the one adjacent to the root, is followed immediately by a $\phi_\infty$, this sign is always $+1$.

Hence the signs that arise in computing $\rho_T(\beta_1,\ldots,\beta_k)$ in terms of $\mu_2$ on the mirrored tree arise entirely from \eqref{eq:mu2vsr2_v2}. If we continue to calculate, we find
\begin{align*}
&= (-1)^{b} \pi e^{-\delta} \mu_2\Big( e^{\delta} \phi_\infty e^{-\delta} r_2\Big( e^{\delta} \sigma_\infty(\beta_2) \otimes e^{\delta} \sigma_\infty(\beta_3) \Big) \otimes e^{\delta}\sigma_\infty(\beta_1) \Big)\\
&= (-1)^{b + c} \pi e^{-\delta} \mu_2\Big( e^{\delta} \phi_\infty e^{-\delta} \mu_2\Big( e^{\delta} \sigma_\infty(\beta_3) \otimes e^{\delta} \sigma_\infty(\beta_2) \Big) \otimes e^{\delta}\sigma_\infty(\beta_1) \Big)\\
&= (-1)^{b + c} \operatorname{eval}_{D'}( \beta_3, \beta_2, \beta_1 )
\end{align*}
where
\[
b = \widetilde{\beta_1}( \widetilde{\beta_2} + \widetilde{\beta_3} ) + \widetilde{\beta_2} + \widetilde{\beta_3} + 1\,, \qquad c = \widetilde{\beta_2}\widetilde{\beta_3} + \widetilde{\beta_3} + 1\,.
\]
This verifies the sign when $P_1 = 0, P_2 = 1, P_3 = 2$. By induction on the height of tree, it is easy to check that in general there is a contribution to the sign of a $\widetilde{\beta_i}\widetilde{\beta_j}$ at the vertex where the path from the $i$th and $j$th leaves to the root meet for the first time, and a $\widetilde{\beta_i}$ every time the path from the $i$th leaf enters a trivalent vertex on the right branch (of the original tree $T$), as claimed.
\end{proof}

\subsection{Transfer to $R/I \otimes k\llbracket \bold{t} \rrbracket$}\label{section:transfer_to_ri}

Recall that given a choice of section $\sigma: R/I \lto R$, which we have fixed above in Setup \ref{setup:overall}, there is by Lemma \ref{prop_algtube} an associated $k\llbracket \bold{t} \rrbracket$-linear isomorphism
\[
\sigmastar: R/I \otimes k \llbracket \bold{t} \rrbracket \lto \widehat{R}\,.
\]
From this we obtain \eqref{eq:transfer_iso_intro} which is used to transfer operators on $\Hom_R(X,Y) \otimes_R \widehat{R}$ (such as the differential or the homotopies $\lambda$) to operators on $R/I \otimes \Hom_k(\widetilde{X}, \widetilde{Y}) \otimes k\llbracket \bold{t} \rrbracket$. Since this introduces various complexities we should first justify why such transfers are necessary: that is, why do we prefer the left hand side of \eqref{eq:transfer_iso_intro} to the right hand side?

Recall that the higher products $\rho_k$ on $\HH_{\BB}$ are defined in terms of operators on the larger space $\HH_{\AA_\theta}$. If we are to reason about these higher products using Feynman diagrams, then to the extent that it is possible, the operators involved should be written as polynomials in creation and annihilation operators for either bosonic or fermionic Fock spaces (that is, in terms of multiplication by or the derivative with respect to ordinary polynomial variables $t$ or odd Grassmann variables $\theta$). It is not obvious \emph{a priori} how to do this: recall that in order to ensure that the connection $\nabla$ existed we had to pass from $R = k[x_1,\ldots,x_n]$ to the $I$-adic completion $\widehat{R}$, which in general is not a power series ring. For example, it is not clear how to express the operation of multiplication by $r \in R$, which we denote by $r^{\#}$, in terms of creation and annihilation operators on $\Hom_R(X,Y) \otimes_R \widehat{R}$.

The purpose of this section is then to explain how the isomorphism $\sigma_{\bold{t}}$ is the canonical means by which to express $r^{\#}$ in terms of creation operators for ``bosonic'' degrees of freedom, here represented by polynomials in the $t_i$.
\\

In what follows we fix a chosen $k$-basis of $R/I$, which we denote
\[
R/I = k z_1 \oplus \cdots \oplus k z_\mu\,.
\]
When $k$ is a field there is a natural monomial basis for $R/I$ associated to any choice of a monomial ordering on $k[x_1,\ldots,x_n]$ and Gr\"obner basis for $I$, see Remark \ref{remark:grobner}. Since $\sigmastar$ is not, in general, an algebra isomorphism (see Lemma \ref{prop_algtube}) there is information in the transfer of the multiplicative structure on $\widehat{R}$ to an operator on $R/I \otimes k\llbracket \bold{t} \rrbracket$, and we record this information in the following tensor:

\begin{definition}\label{defn_gamma} Let $\Gamma$ denote the $k$-linear map
\[
\xymatrix@C+2pc{
R/I \otimes R/I \ar[r]^-{ \sigma \otimes \sigma } & \widehat{R} \otimes \widehat{R} \ar[r]^-{m} & \widehat{R} \ar[r]^-{(\sigmastar)^{-1}} & R/I \otimes k\llbracket \bold{t} \rrbracket
}
\]
where $m$ denotes the usual multiplication on $\widehat{R}$. We define $\Gamma$ as a tensor via the formula
\[
\sigma(z_i)\sigma(z_j) = \sum_{k=1}^\mu \sum_{\delta \in \mathbb{N}^n} \Gamma^{ij}_{k \delta} \sigma(z_k) t^\delta\,.
\]
\end{definition}

\begin{definition}\label{defn:rsharp} Given $r \in R$ we write $r_{(i,\delta)}$ for the unique collection of coefficients in $k$ with the property that in $\widehat{R}$ there is an equality
\[
r = \sum_{i = 1}^\mu \sum_{\delta \in \mathbb{N}^n} r_{(i,\delta)} \,\sigma(z_i) t^{\delta}\,.
\]
Given $r \in R$ we denote by $r^{\#}$ the $k\llbracket \bold{t} \rrbracket$-linear operator
\[
\xymatrix@C+2pc{
R/I \otimes k\llbracket \bold{t} \rrbracket \ar[r]^-{\sigmastar} & \widehat{R} \ar[r]^{r} & \widehat{R} \ar[r]^-{(\sigmastar)^{-1}} & R/I \otimes k\llbracket \bold{t} \rrbracket
}
\]
where $r: \widehat{R} \lto \widehat{R}$ denotes multiplication by $r$.
\end{definition}

\begin{remark}\label{remark:compute_rdelta} For the overall construction of the idempotent finite model to be \emph{constructive} in the sense elaborated above, it is crucial that we have an algorithm for computing these coefficients $r_{(i, \delta)}$. In the notation of Section \ref{section:formaltub}, $r_{(i, \delta)}$ is the coefficient of $z_i$ in the vector $r_\delta \in R/I$, so it suffices to understand how to compute the $r_\delta$.

As a trivial example, if $\bold{t} = (x_1,\ldots,x_n)$ then $R/I = k$ so $\mu = 1$ and $r_{(1,\delta)}$ is just the coefficient of the monomial $t^{\delta} = x^{\delta}$ in the polynomial $r$. In general, when $k$ is a field there is an algorithm for computing $r_\delta$, see Remark \ref{remark:grobner}.
\end{remark}

\begin{lemma}\label{lemma:rsharp_explicit} The operator $r^{\#}$ is given in terms of the tensor $\Gamma$ by the formula
\be
r^{\#}(z_i) = \sum_{l=1}^\mu \sum_{\delta \in \mathbb{N}^n} \Big[ \sum_{\alpha + \beta = \delta } \sum_{k=1}^\mu r_{(k,\alpha)} \Gamma^{ki}_{l\beta} \Big] z_l \otimes t^\delta\,.
\ee
\end{lemma}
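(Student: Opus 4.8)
The plan is to prove the formula by directly unwinding the definitions of $r^{\#}$ (Definition \ref{defn:rsharp}), of the structure tensor $\Gamma$ (Definition \ref{defn_gamma}), and of the coefficients $r_{(k,\alpha)}$, taking care only to justify the rearrangements of the infinite sums that appear; these take place in the $I$-adically complete ring $\widehat{R}$, equivalently in $R/I \otimes k\llbracket \bold{t}\rrbracket \cong k\llbracket\bold{t}\rrbracket^{\oplus \mu}$ with its $(\bold{t})$-adic topology. The starting observation is that $r^{\#}$ is $k\llbracket\bold{t}\rrbracket$-linear, hence determined by the images $r^{\#}(z_i \otimes 1)$ for $1 \le i \le \mu$, and that by Definition \ref{defn:rsharp} one has $r^{\#}(z_i \otimes 1) = (\sigmastar)^{-1}\big( r \cdot \sigma(z_i)\big)$, the product being taken in $\widehat{R}$. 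The isomorphism $\sigmastar$ of Lemma \ref{prop_algtube} is $k\llbracket\bold{t}\rrbracket$-linear, and since $\sigmastar(t^\gamma x) = t^\gamma \sigmastar(x)$ with $t^\gamma \in I^{|\gamma|}$ it carries the $(\bold{t})$-adic filtration of its source onto the $I$-adic filtration of $\widehat{R}$; in particular it is a topological isomorphism, so $I$-adically convergent sums may be pushed through $\sigmastar$ and $(\sigmastar)^{-1}$ term by term.

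Next I would expand. By Definition \ref{defn:rsharp}, $r = \sum_{k,\alpha} r_{(k,\alpha)}\sigma(z_k)t^\alpha$ in $\widehat{R}$, a sum which converges because $t^\alpha \in I^{|\alpha|}$ and for each $N$ only finitely many $\alpha$ have $|\alpha| < N$. Multiplying by $\sigma(z_i)$ and substituting the defining relation $\sigma(z_k)\sigma(z_i) = \sum_{l,\beta} \Gamma^{ki}_{l\beta}\sigma(z_l)t^\beta$ of Definition \ref{defn_gamma} gives
\[
r\cdot\sigma(z_i) = \sum_{k,\alpha}\sum_{l,\beta} r_{(k,\alpha)}\,\Gamma^{ki}_{l\beta}\,\sigma(z_l)\,t^{\alpha+\beta}\,,
\]
where for each $N$ only the finitely many terms with $|\alpha|+|\beta| < N$ lie outside $I^N\widehat{R}$, so the reordering is legitimate. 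Applying $(\sigmastar)^{-1}$, which sends $\sigma(z_l)t^{\alpha+\beta}$ to $z_l \otimes t^{\alpha+\beta}$, and then collecting terms according to the value of the multi-index $\delta = \alpha + \beta$ and the index $l$ yields precisely the asserted formula.

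In summary the argument is a routine substitution; the only point requiring genuine care is the bookkeeping for the infinite sums, i.e. verifying $I$-adic summability so that the reorderings above are valid, and I expect this to be the only (and a minor) obstacle. As a sanity check one can specialise to $\bold{t} = \bold{x}$, where $\mu = 1$ and $\sigma(z_1) = 1$, so that $\Gamma^{11}_{1\beta}$ vanishes unless $\beta = 0$ and the formula collapses to $r^{\#}(z_1) = \sum_\delta r_{(1,\delta)}\, z_1 \otimes t^\delta$, i.e. the monomial expansion of $r$, consistent with Remark \ref{remark:compute_rdelta}.
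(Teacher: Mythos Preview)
Your proposal is correct and follows essentially the same approach as the paper: compute $\sigmastar r^{\#}(z_i\otimes 1) = r\,\sigma(z_i)$, expand $r$ via its coefficients $r_{(k,\alpha)}$, substitute the defining relation for $\Gamma$, and regroup by $\delta = \alpha+\beta$. Your version is in fact more careful than the paper's, which presents the computation as a bare chain of equalities without commenting on the $I$-adic convergence; your attention to the topological bookkeeping and the sanity check are welcome but not strictly necessary additions.
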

\begin{proof}
We have
\begin{align*}
\sigmastar r^{\#}(z_i \otimes 1) &= r \sigma(z_i)\\
&= \sum_{k, \alpha} r_{(k,\alpha)} [ \sigma(z_k) \sigma(z_i) ] t^{\alpha}\\
&= \sum_{k,\alpha,l,\beta} r_{(k,\alpha)} \Gamma^{ki}_{l \beta} \sigma(z_l) t^{\alpha + \beta}\\
&= \sum_\delta \sum_{k,l} \sum_{\alpha + \beta = \delta} r_{(k,\alpha)} \Gamma^{ki}_{l \beta} \sigma(z_l)t^\delta
\end{align*}
as claimed.
\end{proof}

\subsection{The operator $\zeta$}\label{section:propagator}

One of the most complex aspects of calculating the $A_\infty$-products described by Theorem \ref{theorem:main_ainfty_products} are the scalar factors contributed by the operator $\zeta$ which is the inverse of the grading operator for the virtual degree. In this section we provide a closer analysis of these factors.

While the virtual degree of Definition \ref{definition:zeta} is not a genuine $\nZ$-grading because $\HH_{\AA'_\theta}$ involves power series, for any given tree our calculations of higher $A_\infty$-product on $\BB$ only involve polynomials in the $t_i$, so there is no harm in thinking about the virtual degree as a $\nZ$-grading and $\zeta$ as its inverse. Observe that that the critical Atiyah class $\vAt_{\AA} = [ d_{\AA}, \nabla ]$ is not homogeneous with respect to this grading, because while $\nabla$ is homogeneous of degree zero with respect to the virtual degree (since $\theta_i$ has virtual degree $+1$ and $\partial_{t_i}$ has virtual degree $-1$) the operator $d_{\AA}$ involves multiplications by polynomials $r$ which need not have a consistent degree (viewed as operators $r^{\#}$ on $R/I \otimes k\llbracket \bold{t} \rrbracket$ as in the previous section). To analyse this operator on $\HH_{\AA'_\theta}$ we write
\[
d_{\AA} = \sum_{\delta \in \mathbb{N}^n} d_{\AA}^{\,(\delta)} t^\delta
\]
for some $k$-linear odd operators $d_{\AA}^{\,(\delta)}$ on $\bigoplus_{X,Y} R/I \otimes \Hom_k(\widetilde{X},\widetilde{Y})$. Then
\begin{align*}
(\zeta \vAt_{\AA})^m &= \sum_{\delta_1,\ldots,\delta_m} \zeta [ d^{\,(\delta_1)}_{\AA} t^{\delta_1}, \nabla ] \cdots \zeta [ d^{\,(\delta_m)}_{\AA} t^{\delta_m}, \nabla ]\\
 &= \sum_{i_1,\ldots,i_m} \sum_{\delta_1,\ldots,\delta_m} \zeta \Big\{ \theta_{i_1} \partial_{t_{i_1}}(t^{\delta_1}) d_{\AA}^{\,(\delta_1)} \Big\} \cdots \zeta \Big\{ \theta_{i_m} \partial_{t_{i_m}}(t^{\delta_m}) d_{\AA}^{\,(\delta_m)} \Big\}\,.
\end{align*}
Evaluated on a tensor $\alpha$ of virtual degree $a = \Vert \alpha \Vert$ this gives
\begin{align*}
\sum_{i_1, \ldots, i_m} \sum_{\delta_1,\ldots,\delta_m} Z^{\,\rightarrow}(|\alpha|,|\delta_1|,\ldots,|\delta_m|) \Big\{ \theta_{i_1} \partial_{t_{i_1}}(t^{\delta_1}) d_{\AA}^{\,(\delta_1)} \Big\} \cdots \Big\{ \theta_{i_m} \partial_{t_{i_m}}(t^{\delta_m}) d_{\AA}^{\,(\delta_m)} \Big\}(\alpha)
\end{align*}
where the scalar factor is computed by

\begin{definition}\label{defn:Z_factors} Given integers $a > 0$ and a sequence $d_1, \ldots, d_m > 0$ we define
\be\label{eq:defn_Z_factor_oriented}
Z^{\,\rightarrow}(a,d_1,\ldots,d_m) = \frac{1}{a + d_1} \frac{1}{a + d_1 + d_{2}} \cdots \frac{1}{a + d_1 + \cdots + d_m}
\ee
and a symmetrised version
\be
Z(a,d_1,\ldots,d_m) = \sum_{\sigma \in S_m} Z^{\,\rightarrow}(a,d_{\sigma 1},\ldots,d_{\sigma m})\,.
\ee
\end{definition}

In general there is no more to say, and the generic factors contributed by $\zeta$ to Feynman diagrams have the form given in \eqref{eq:defn_Z_factor_oriented} above. However, there is a useful special case:

\begin{lemma}\label{lemma:technical_antic} Let $\KK \subseteq \HH_{\AA'_\theta}$ be a subspace with the following properties
\begin{itemize}
\item[(a)] $\KK$ is closed under $d_{\AA}^{\,(\delta)}$ for every $\delta \neq \bold{0}$.
\item[(b)] As operators on $\KK$, we have $\big[ d_{\AA}^{\,(\delta)}, d_{\AA}^{\, (\gamma)} \big] = 0$ for all $\delta, \gamma \neq \bold{0}$.
\end{itemize}
Then for any $\alpha \in \KK$, $(\zeta \vAt_{\AA})^m(\alpha)$ is equal to
\be
\sum_{i_1 < \ldots < i_m} \sum_{\delta_1,\ldots,\delta_m} (-1)^{\binom{m}{2}} \theta_{i_1} \cdots \theta_{i_m} Z(|\alpha|,|\delta_{1}|,\ldots,|\delta_{m}|) \prod_{r=1}^m \Big\{ \partial_{t_{i_r}}(t^{\delta_{r}}) d_{\AA}^{\,(\delta_{r})} \Big\} (\alpha)\,.
\ee
\end{lemma}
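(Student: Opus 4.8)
The plan is to begin from the expansion of $(\zeta\vAt_{\AA})^m$ evaluated on a virtual-degree-homogeneous $\alpha$, which was derived in the discussion preceding Definition \ref{defn:Z_factors}:
\[
(\zeta\vAt_{\AA})^m(\alpha) = \sum_{i_1,\ldots,i_m}\sum_{\delta_1,\ldots,\delta_m} Z^{\,\rightarrow}(|\alpha|,|\delta_1|,\ldots,|\delta_m|)\,\bigl\{\theta_{i_1}\partial_{t_{i_1}}(t^{\delta_1})d_{\AA}^{\,(\delta_1)}\bigr\}\cdots\bigl\{\theta_{i_m}\partial_{t_{i_m}}(t^{\delta_m})d_{\AA}^{\,(\delta_m)}\bigr\}(\alpha).
\]
The general $\alpha\in\KK$ then follows by $k$-linearity. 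The two moves to carry out are: pull all the $\theta$'s to the far left of each operator monomial, and then symmetrise the sum over the indices $i_\bullet$.

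For the first move, in each summand I would slide every $\theta_{i_r}$ to the left past the operators to its left. On its way it passes the $r-1$ operators $d_{\AA}^{\,(\delta_1)},\ldots,d_{\AA}^{\,(\delta_{r-1})}$; each of these is odd and, acting on a different tensor factor of $\HH_{\AA'_\theta}$ than $\theta_{i_r}$, graded-commutes with it, i.e.\ anticommutes; meanwhile $\theta_{i_r}$ commutes with all the even central polynomial multiplications $\partial_{t_{i_s}}(t^{\delta_s})$, and the $\theta$'s never cross one another. The accumulated sign is $(-1)^{0+1+\cdots+(m-1)}=(-1)^{\binom{m}{2}}$, so
\[
\bigl\{\theta_{i_1}\partial_{t_{i_1}}(t^{\delta_1})d_{\AA}^{\,(\delta_1)}\bigr\}\cdots\bigl\{\theta_{i_m}\partial_{t_{i_m}}(t^{\delta_m})d_{\AA}^{\,(\delta_m)}\bigr\} = (-1)^{\binom{m}{2}}\,\theta_{i_1}\cdots\theta_{i_m}\prod_{r=1}^m\bigl\{\partial_{t_{i_r}}(t^{\delta_r})d_{\AA}^{\,(\delta_r)}\bigr\}.
\]
Since $\theta_{i_1}\cdots\theta_{i_m}$ is a wedge of the $\theta$'s, it vanishes whenever two of the $i_r$ coincide, so only the pairwise-distinct tuples $(i_1,\ldots,i_m)$ survive.

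For the symmetrisation I would write each pairwise-distinct tuple uniquely as $(j_{\tau(1)},\ldots,j_{\tau(m)})$ with $j_1<\cdots<j_m$ and $\tau\in S_m$, and simultaneously substitute $\delta_r=\epsilon_{\tau(r)}$, so that $(\epsilon_\bullet)$ runs over all tuples as $(\delta_\bullet)$ does. Reordering the wedge $\theta_{j_{\tau(1)}}\cdots\theta_{j_{\tau(m)}}$ to $\theta_{j_1}\cdots\theta_{j_m}$ contributes $\sgn(\tau)$. This is exactly where hypotheses (a) and (b) enter: since $\partial_{t_i}(t^{\delta})$ vanishes unless $\delta\neq\bold{0}$, every $d_{\AA}^{\,(\delta)}$ that occurs has $\delta\neq\bold{0}$, so by (a) the partially evaluated vectors stay in $\KK$ and by (b) the blocks $\partial_{t_{j_s}}(t^{\epsilon_s})d_{\AA}^{\,(\epsilon_s)}$ pairwise anticommute on $\KK$ (the polynomial factors being central), whence the parallel reordering of $\prod_r\bigl\{\partial_{t_{j_{\tau(r)}}}(t^{\epsilon_{\tau(r)}})d_{\AA}^{\,(\epsilon_{\tau(r)})}\bigr\}$ into $\prod_r\bigl\{\partial_{t_{j_r}}(t^{\epsilon_r})d_{\AA}^{\,(\epsilon_r)}\bigr\}$ also contributes $\sgn(\tau)$. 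The two signs cancel; the scalar becomes $Z^{\,\rightarrow}(|\alpha|,|\epsilon_{\tau(1)}|,\ldots,|\epsilon_{\tau(m)}|)$, and summing over $\tau\in S_m$ turns it into $Z(|\alpha|,|\epsilon_1|,\ldots,|\epsilon_m|)$ as in Definition \ref{defn:Z_factors}. Renaming $(j_\bullet,\epsilon_\bullet)$ back to $(i_\bullet,\delta_\bullet)$ yields the asserted identity.

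Essentially all of this is sign bookkeeping; the one substantive point is the cancellation of the $\sgn(\tau)$ coming from the fermionic reordering of the $\theta$'s against the $\sgn(\tau)$ coming from reordering the $d_{\AA}^{\,(\delta)}$'s, which is precisely what hypothesis (b) provides. I would be careful to record that hypothesis (a) is what makes this anticommutation relation applicable to the already-evaluated tails $d_{\AA}^{\,(\delta_{r+2})}\cdots d_{\AA}^{\,(\delta_m)}(\alpha)\in\KK$.
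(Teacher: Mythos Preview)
Your proof is correct and follows essentially the same route as the paper's own argument: pull the $\theta$'s to the left to acquire the $(-1)^{\binom{m}{2}}$, restrict to pairwise-distinct index tuples, rewrite as a sum over increasing tuples together with a permutation, and then cancel the sign from reordering the $\theta$'s against the sign from anticommuting the $d_{\AA}^{\,(\delta)}$ blocks (using (a) and (b)), so that the sum over permutations of $Z^{\,\rightarrow}$ collapses to $Z$. The only cosmetic difference is that the paper first passes to increasing indices and then reindexes the $\delta$'s by a separate change of summation variable, whereas you do both substitutions in one step; the content is identical.
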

\begin{proof}
Only sequences of distinct $\theta$'s contribute, so we find that $(\zeta \vAt_{\AA})^m(\alpha)$ is equal to
\begin{align*}
&\sum_{\substack{i_1,\ldots,i_m \\ \delta_1,\ldots,\delta_m}} Z^{\,\rightarrow}(|\alpha|,|\delta_1|,\ldots,|\delta_m|) (-1)^{\binom{m}{2}} \theta_{i_1} \cdots \theta_{i_m} \prod_{r=1}^m \Big\{ \partial_{t_{i_r}}(t^{\delta_r}) d_{\AA}^{\,(\delta_r)} \Big\} (\alpha)\\
&= \sum_{\substack{i_1 < \ldots < i_m \\ \delta_1,\ldots,\delta_m}} \sum_{\sigma \in S_m} Z^{\,\rightarrow}(|\alpha|,|\delta_1|,\ldots,|\delta_m|) (-1)^{\binom{m}{2}} \theta_{i_{\sigma 1}} \cdots \theta_{i_{\sigma m}} \prod_{r=1}^m \Big\{ \partial_{t_{i_{\sigma r}}}(t^{\delta_r}) d_{\AA}^{\,(\delta_r)} \Big\} (\alpha)\\
&= \sum_{i_1 < \ldots < i_m} \theta_{\bold{i}} \sum_{\sigma \in S_m} (-1)^{\binom{m}{2} + |\sigma|} \sum_{\delta_1,\ldots,\delta_m} Z^{\,\rightarrow}(|\alpha|,|\delta_1|,\ldots,|\delta_m|)  \prod_{r=1}^m \Big\{ \partial_{t_{i_{\sigma r}}}(t^{\delta_r}) d_{\AA}^{\,(\delta_r)} \Big\} (\alpha)\,.
\end{align*}
Note that only sequences $\delta_1,\ldots,\delta_m \in \mathbb{N}^n$ with all $\delta_i \neq \bold{0}$ contribute to this sum, so by hypothesis all the operators $d_{\AA}^{\,(\delta_r)}$ involved anti-commute
\begin{align*}
&= \sum_{i_1 < \ldots < i_m} \theta_{\bold{i}} \sum_{\sigma \in S_m} (-1)^{\binom{m}{2} + |\sigma|} \sum_{\delta_1,\ldots,\delta_m} Z^{\,\rightarrow}(|\alpha|,|\delta_{\sigma 1}|,\ldots,|\delta_{\sigma m}|)  \prod_{r=1}^m \Big\{ \partial_{t_{i_{\sigma r}}}(t^{\delta_{\sigma r}}) d_{\AA}^{\,(\delta_{\sigma r})} \Big\} (\alpha)\\
&= \sum_{i_1 < \ldots < i_m} \theta_{\bold{i}} \sum_{\sigma \in S_m} (-1)^{\binom{m}{2} + |\sigma|} \sum_{\delta_1,\ldots,\delta_m} Z^{\,\rightarrow}(|\alpha|,|\delta_{\sigma 1}|,\ldots,|\delta_{\sigma m}|) (-1)^{|\sigma|} \prod_{r=1}^m \Big\{ \partial_{t_{i_r}}(t^{\delta_{r}}) d_{\AA}^{\,(\delta_{r})} \Big\} (\alpha)\\
&= \sum_{i_1 < \ldots < i_m} (-1)^{\binom{m}{2}} \theta_{\bold{i}}   \sum_{\delta_1,\ldots,\delta_m} \sum_{\sigma \in S_m} Z^{\,\rightarrow}(|\alpha|,|\delta_{\sigma 1}|,\ldots,|\delta_{\sigma m}|) \prod_{r=1}^m \Big\{ \partial_{t_{i_r}}(t^{\delta_{r}}) d_{\AA}^{\,(\delta_{r})} \Big\} (\alpha)\\
&= \sum_{i_1 < \ldots < i_m} (-1)^{\binom{m}{2}} \theta_{\bold{i}}   \sum_{\delta_1,\ldots,\delta_m} Z(|\alpha|,|\delta_{1}|,\ldots,|\delta_{m}|) \prod_{r=1}^m \Big\{ \partial_{t_{i_r}}(t^{\delta_{r}}) d_{\AA}^{\,(\delta_{r})} \Big\} (\alpha)
\end{align*}
as claimed.
\end{proof}

The lemma is sometimes useful in reducing the number of Feynman diagrams that one has to actually calculate, see Remark \ref{remark:symmetry_A_type}. While the hypotheses of Lemma \ref{lemma:technical_antic} are technical, in the typical cases they are easy to check:

\begin{example}\label{example:computing_min_kstab} Suppose that $X$ is a Koszul matrix factorisation as in \eqref{eq:defn_X_tilde} and that we use the isomorphism of Lemma \ref{lemma:iso_rho} to identify $\HH_{\AA'_\theta}(X,X)$ with
\[
\bigwedge \big( F_\theta \oplus F_\xi \oplus F_{\Bar{\xi}} \big) \otimes R/I \otimes k\llbracket \bold{t} \rrbracket
\]
on which space by Lemma \ref{lemma:transfer_rho} we have
\[
d_{\AA}^{\,(\delta)} = \sum_{i=1}^r f_i^{\,(\delta)} \xi_i^* + \sum_{i=1}^r g_i^{\,(\delta)} \Bar{\xi}_i^*
\]
for some operators $f_i^{\,(\delta)},g_i^{\,(\delta)}$ on $R/I$ computed by Lemma \ref{lemma:rsharp_explicit}. Let $\KK$ be the subspace
\[
\bigwedge \big( F_\theta \oplus F_{\Bar{\xi}} ) \otimes R/I \otimes k\llbracket \bold{t} \rrbracket
\]
with no $\xi_i$'s, then as an operator on $\KK$
\[
d_{\AA}^{\,(\delta)}\Big|_{\KK} = \sum_{i=1}^r g_i^{\,(\delta)} \Bar{\xi}_i^*\,.
\]
These operators will all pair-wise anticommute, provided that $[ g_i^{\,(\delta)}, g_j^{\,(\varepsilon)}] = 0$ as operators on $R/I$ for all $1 \le i,j \le r$ and $\delta, \epsilon \neq \bold{0}$. This is true trivially when $R/I = k$, which means that the previous Lemma applies to calculating $(\zeta \vAt_{\AA})^m$ everywhere in Feynman diagrams computing the minimal model of $\AA(k^{\stab},k^{\stab})$, see Section \ref{section:generator}.
\end{example}

\begin{remark} Scalar factors like $Z$ occur in the context of infrared divergences involving soft virtual particles (such as soft virtual photons in quantum electrodynamics) see for instance \cite[Ch. 13]{weinberg} and \cite[p.204]{ps}. The operator $\zeta$ is part of a propagator \cite[\S 4.1.3]{lazaroiu_sft} which like $\frac{1}{p^2 - m^2 + i \varepsilon}$ in QFT has the effect generically of suppressing contributions from terms far off the mass-shell (the further off the mass-shell you are, the larger $p^2-m^2$ is). In our case, Feynman diagrams with large numbers of internal virtual particle lines ($\theta$ and $t$ lines) are suppressed with respect to the usual metric on $\mathbb{Q}$. 
\end{remark}

The most commonly treated case of the soft amplitudes in textbooks is the case of an on-shell external electron line, which corresponds to taking $a = 0$. In this case there is a simple formula for $Z$, which is easily proved by induction:

\begin{lemma} Given a sequence $d_1,\ldots,d_m > 0$ of integers,
\be
Z(0,d_1,\ldots,d_m) = \sum_{\sigma \in S_m} \frac{1}{d_{\sigma 1}} \frac{1}{d_{\sigma 1} + d_{\sigma 2}} \cdots \frac{1}{d_{\sigma 1} + \cdots + d_{\sigma m}} = \frac{1}{d_1 \cdots d_m}\,.
\ee
\end{lemma}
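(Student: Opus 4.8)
The plan is to prove the identity by induction on $m$, peeling off the index that the permutation places last. The base case $m=1$ is the tautology $Z(0,d_1)=\tfrac{1}{d_1}$, so I would assume $m\ge 2$ and that the formula holds for every sequence of $m-1$ positive integers. The only genuinely relevant observation is that the last factor of each summand $Z^{\,\rightarrow}(0,d_{\sigma 1},\ldots,d_{\sigma m})$ equals $\tfrac{1}{d_{\sigma 1}+\cdots+d_{\sigma m}}=\tfrac1D$ with $D=d_1+\cdots+d_m$, independently of $\sigma$; pulling this common scalar out of the sum leaves
\[
Z(0,d_1,\ldots,d_m)=\frac1D\sum_{\sigma\in S_m}\frac{1}{d_{\sigma 1}}\,\frac{1}{d_{\sigma 1}+d_{\sigma 2}}\cdots\frac{1}{d_{\sigma 1}+\cdots+d_{\sigma(m-1)}}\,.
\]

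Next I would group the permutations according to the value $j=\sigma(m)$. The restriction $\sigma\mapsto(\sigma 1,\ldots,\sigma(m-1))$ identifies $\{\sigma\in S_m:\sigma m=j\}$ with the set of all orderings of $\{1,\ldots,m\}\setminus\{j\}$, so the inner sum with $j$ held fixed is precisely $Z\big(0,(d_i)_{i\neq j}\big)$, which by the inductive hypothesis equals $\prod_{i\neq j}d_i^{-1}$. Summing over $j$ then gives
\[
Z(0,d_1,\ldots,d_m)=\frac1D\sum_{j=1}^m\prod_{i\neq j}\frac{1}{d_i}=\frac{1}{D\,d_1\cdots d_m}\sum_{j=1}^m d_j=\frac{1}{d_1\cdots d_m}\,,
\]
which is the assertion.

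I do not expect any real obstacle here: the one point that deserves a line of justification is the re-indexing bijection used to recognise the inner sum as $Z$ of the reduced sequence, and that is routine. If one prefers to avoid the induction, an alternative is to start from the Feynman-parameter identity $d_1^{-1}\cdots d_m^{-1}=\int_{[0,\infty)^m}e^{-\sum_i d_i s_i}\,\ud s$, decompose the octant $[0,\infty)^m$ into the $m!$ closed simplices $\{s_{\sigma 1}\ge\cdots\ge s_{\sigma m}\ge 0\}$ indexed by $\sigma\in S_m$, and on each simplex perform the substitution $u_k=s_{\sigma k}-s_{\sigma(k+1)}$ (with $s_{\sigma(m+1)}:=0$); since $\sum_i d_i s_{\sigma i}=\sum_k u_k(d_{\sigma 1}+\cdots+d_{\sigma k})$, the integral over that simplex is $\prod_{k=1}^m(d_{\sigma 1}+\cdots+d_{\sigma k})^{-1}$, and summing over $\sigma$ reproduces $Z(0,d_1,\ldots,d_m)$, so both sides of the claimed equality compute the same integral.
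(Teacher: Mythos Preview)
Your induction argument is correct and is exactly the approach the paper has in mind: the lemma is stated with the remark ``which is easily proved by induction'' and no further details are given, so your write-up in fact supplies the proof the paper omits. The alternative Feynman-parameter argument you sketch is also valid and gives a pleasant conceptual explanation, though it goes beyond what the paper indicates.
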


We do not know any simple formula for $Z$ in general. 

\section{Feynman diagrams}\label{section:feynman_diagram}

In quantum field theory, the calculus of Feynman diagrams provides algorithms for computing scattering amplitudes (with some caveats) and reasoning about physical processes. The role of Feynman diagrams in the theory of $A_\infty$-categories is similar: they provide an algorithmic method for computing the higher $A_\infty$-products on $\BB$ as well as a set of tools for reasoning about these products. The connection between $A_\infty$-structures, homological perturbation and Feynman diagrams is well-known; see \cite{lazaroiu_sft,lazaroiu_roiban}, \cite[p.42]{lazaroiu} and \cite[\S 2.5]{gwilliam}. However, in this context nontrivial examples with fully explicit Feynman rules accounting for all signs and symmetry factors and \emph{actual diagrams} like Figure \ref{fig:feynman_1} below, are rare. For background in the physics of Feynman diagrams we recommend \cite[Ch. 6]{weinberg}, \cite[\S 4.4]{ps} and for a more mathematical treatment \cite{qftstring}. 

The presentation of $A_\infty$-products in terms of Feynman diagrams is most useful when the objects of $\AA$ are matrix factorisations of Koszul type, and so we will focus on this case below. Throughout we adopt the hypotheses of Setup \ref{setup:overall}, and we write
\be\label{eq:hhalt_koszul}
\HH = \HH_{\AA'_\theta}\,, \qquad \HH(X,Y) = \bigwedge F_\theta \otimes R/I \otimes \Hom_k(\widetilde{X},\widetilde{Y}) \otimes k\llbracket \bold{t} \rrbracket\,.
\ee
Our aim is give a diagrammatic interpretation of the operators $\rho_T$, as given for example in \eqref{eq:explicit_tree_operator}. Implicitly $\rho_T$ consists of many summands, obtained by expanding the $e^{\delta}, e^{-\delta}$ and $\sigma_\infty, \phi_\infty$ operators. Among the summands generated from \eqref{eq:explicit_tree_operator} is for example
\be\label{eq:contrib_feynman}
\pi \delta^2 r_2\Big( (\zeta \vAt_{\AA})^2 \sigma \otimes \delta^3 (\zeta \vAt_{\AA})^3 \zeta \nabla \delta r_2\Big( \delta^5 (\zeta \vAt_{\AA})^6 \sigma \otimes (\zeta \vAt_{\AA}) \sigma \Big) \Big)\,.
\ee
The aim is to
\begin{itemize}
\item represent the space $\HH$ on which these operators act as a tensor product of exterior algebras and (completed) symmetric algebras, and
\item represent the operators as polynomials in creation and annihilation operators (that is, as multiplication with, or the derivative with respect to, even or odd generators of the relevant algebras).
\end{itemize}
Once this is done we can represent the operator \eqref{eq:contrib_feynman} as the contraction of a set of polynomials in creation and annihilation operators, with the pattern of contractions dictated by the structure of the original tree. The process of \emph{reducing this contraction to normal form} (with all annihilation operators on the right, and creation operators on the left) involves commuting creation and annihilation operators past one another, and their commutation relations generate many new terms. Feynman diagrams provide a calculus for organising these terms, and thus computing the normal form. There are three classes of operators making up \eqref{eq:contrib_feynman} which need to be given a diagrammatic interpretation:
\begin{itemize}
\item In Section \ref{section:koszul_mf} we represent $\HH$ as suggested above.
\item In Section \ref{section:fenyman_diagram_1} we treat $\vAt_{\AA}, \delta$.
\item In Section \ref{section:fenyman_diagram_2} we treat $\zeta$.
\item In Section \ref{section:feynman_diagram_3} we treat $\mu_2$.
\end{itemize}
Finally, in Section \ref{section:feynman_diagram_4} we give the Feynman rules and explain the whole process of computing with Feynman diagrams in an example.

\subsection{Koszul matrix factorisations}\label{section:koszul_mf}

Our Feynman diagrams will have vertices representing certain operators on $\HH(X,Y)$ for a pair of matrix factorisations $X,Y$ of $W$ of Koszul type. This means that we suppose given collections of polynomials $\{ f_i, g_i \}_{i=1}^r$ and $\{ u_j, v_j \}_{j=1}^s$ in $R$ satisfying
\[
W = \sum_{i=1}^r f_i g_i = \sum_{j=1}^s u_j v_j\,.
\]
To these polynomials we may associate matrix factorisations $X,Y$ defined as follows: we take odd generators $\xi_1,\ldots,\xi_r,\eta_1,\ldots,\eta_s$, set $F_\xi = \bigoplus_{i=1}^r k \xi_i, F_\eta = \bigoplus_{j=1}^s k \eta_j$ and
\begin{align*}
\widetilde{X} &= \bigwedge F_\xi = \bigwedge( k \xi_1 \oplus \cdots \oplus k \xi_r )\\
\widetilde{Y} &= \bigwedge F_\eta = \bigwedge( k \eta_1 \oplus \cdots \oplus k \eta_s )
\end{align*}
and then define
\begin{align}
X &= \big( \widetilde{X} \otimes R, \sum_{i=1}^r f_i \xi_i^* + \sum_{i=1}^r g_i \xi_i \big)\,,\label{eq:defn_X_tilde}\\
Y &= \big( \widetilde{Y} \otimes R, \sum_{j=1}^s u_j \eta_j^* + \sum_{j=1}^s v_j \eta_j \big)\,.\label{eq:defn_Y_tilde}
\end{align}
We ultimately want to give a graphical representation of operators on the $k$-module \eqref{eq:hhalt_koszul}, for which relevant operators are polynomials in creation and annihilation operators. It is therefore convenient to rewrite $\Hom_k( \widetilde{X}, \widetilde{Y} )$ in the form of an exterior algebra.

\begin{lemma}\label{lemma:iso_nu} There is an isomorphism of $\nZ_2$-graded $k$-modules
\[
\nu: \Hom_k( \widetilde{X}, \widetilde{Y} ) \lto \bigwedge F_\eta \otimes \bigwedge F_\xi^* 
\]
defined by
\[
\nu( \phi ) = \sum_{p \ge 0} \sum_{i_1 < \cdots < i_p} (-1)^{\binom{p}{2}} \phi( \xi_{i_1} \cdots \xi_{i_p} ) \xi_{i_1}^* \cdots \xi_{i_p}^*\,.
\]
\end{lemma}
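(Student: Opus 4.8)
Lemma~\ref{lemma:iso_nu} asserts that the $k$-linear map $\nu\colon \Hom_k(\widetilde X,\widetilde Y)\to \bigwedge F_\eta\otimes\bigwedge F_\xi^*$ defined by $\nu(\phi)=\sum_{p\ge 0}\sum_{i_1<\cdots<i_p}(-1)^{\binom p2}\phi(\xi_{i_1}\cdots\xi_{i_p})\,\xi_{i_1}^*\cdots\xi_{i_p}^*$ is an isomorphism of $\nZ_2$-graded $k$-modules.

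**Approach.** The plan is to exhibit an explicit two-sided inverse, or — more efficiently — to recognise $\nu$ as a composite of standard isomorphisms. Since $\widetilde X=\bigwedge F_\xi$ and $\widetilde Y=\bigwedge F_\eta$ are finitely generated free $k$-modules, we have the canonical isomorphism $\Hom_k(\widetilde X,\widetilde Y)\cong \widetilde Y\otimes_k \widetilde X^*=\bigwedge F_\eta\otimes_k(\bigwedge F_\xi)^*$, sending $\phi$ to $\sum_J \phi(e_J)\otimes e_J^{\vee}$ where $\{e_J\}$ runs over the monomial basis $\xi_{i_1}\cdots\xi_{i_p}$ ($i_1<\cdots<i_p$) of $\bigwedge F_\xi$ and $\{e_J^\vee\}$ is the dual basis of $(\bigwedge F_\xi)^*$. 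First I would establish the standard fact that $(\bigwedge F_\xi)^*\cong\bigwedge F_\xi^*$ as $\nZ_2$-graded $k$-modules via the pairing under which the monomial $\xi_{i_1}^*\cdots\xi_{i_p}^*$ (with $i_1<\cdots<i_p$) is sent to $(-1)^{\binom p2}$ times the functional dual to $\xi_{i_1}\cdots\xi_{i_p}$; the sign $(-1)^{\binom p2}$ is exactly the Koszul sign accumulated in evaluating the nested contractions $\xi_{i_1}^*\lrcorner\cdots\lrcorner\,\xi_{i_p}^*\lrcorner(\xi_{i_1}\wedge\cdots\wedge\xi_{i_p})$, which by the contraction formula in~\eqref{eq:wedge_contract_comm} and the definition of $\xi_j^*\lrcorner(-)$ telescopes to that scalar. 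Composing $\Hom_k(\widetilde X,\widetilde Y)\cong\bigwedge F_\eta\otimes(\bigwedge F_\xi)^*\cong\bigwedge F_\eta\otimes\bigwedge F_\xi^*$ then gives precisely the map $\nu$ written in the statement; being a composite of isomorphisms between finitely generated free modules, it is an isomorphism, and each step plainly preserves the $\nZ_2$-grading since $|\xi_{i_1}^*\cdots\xi_{i_p}^*|\equiv p\equiv|\xi_{i_1}\cdots\xi_{i_p}|$.

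**Key steps, in order.** (1) Fix the monomial bases $\{\xi_I=\xi_{i_1}\cdots\xi_{i_p}: i_1<\cdots<i_p\}$ of $\bigwedge F_\xi$ and likewise for $\bigwedge F_\eta$, $\bigwedge F_\xi^*$. (2) Write down the inverse map directly: define $\mu\colon\bigwedge F_\eta\otimes\bigwedge F_\xi^*\to\Hom_k(\widetilde X,\widetilde Y)$ on basis tensors by $\mu(\eta_K\otimes\xi_I^*)=(-1)^{\binom p2}\,(\xi_J\mapsto \delta_{IJ}\,\eta_K)$, i.e. the elementary map killing all monomials $\xi_J\ne\xi_I$ and sending $\xi_I\mapsto \eta_K$, twisted by $(-1)^{\binom p2}$ where $p=|I|$. (3) Verify $\nu\circ\mu=\mathrm{id}$ and $\mu\circ\nu=\mathrm{id}$ on these bases; the only content is that the composite sign $(-1)^{\binom p2}(-1)^{\binom p2}=1$, together with the contraction identity $\xi_{i_1}^*\cdots\xi_{i_p}^*(\xi_{j_1}\cdots\xi_{j_p})=\delta_{I J}\,(-1)^{\binom p2}$ for $i_1<\cdots<i_p$, $j_1<\cdots<j_p$, which one proves by an induction on $p$ using the single contraction formula already recorded before Section~\ref{section:ainfcat}. (4) Note both source and target are free of rank $2^{r+s}$ (or observe degreewise: in even/odd parity the ranks match because $\binom p2$ has no effect on parity), so either the explicit inverse or a rank count finishes the argument; preservation of $\nZ_2$-grading is immediate from step (1).

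**Main obstacle.** There is no real obstacle here — the lemma is essentially a bookkeeping statement — but the one point requiring care is the sign $(-1)^{\binom p2}$: one must check that the natural pairing between $\bigwedge F_\xi^*$ (monomials built from contraction operators $\xi_i^*$) and $\bigwedge F_\xi$ produces exactly $(-1)^{\binom p2}$ and not some other permutation sign, so that $\nu$ and its inverse carry reciprocal twists that cancel. The cleanest way to pin this down is to compute $\xi_{i_1}^*\lrcorner(\cdots(\xi_{i_p}^*\lrcorner(\xi_{i_1}\wedge\cdots\wedge\xi_{i_p}))\cdots)$ step by step: peeling off $\xi_{i_p}^*$ moves it past $\xi_{i_1}\cdots\xi_{i_{p-1}}$ contributing $(-1)^{p-1}$, and summing $\sum_{l=0}^{p-1}l=\binom p2$ gives the claimed sign. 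Once this single computation is in hand, the rest of the proof is the routine verification above, and I would present it tersely.
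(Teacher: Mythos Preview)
Your proposal is correct; the paper states this lemma without proof, treating it as an elementary bookkeeping fact. Your argument via the canonical identification $\Hom_k(\widetilde X,\widetilde Y)\cong \widetilde Y\otimes\widetilde X^*$ together with the contraction computation yielding the sign $(-1)^{\binom p2}$ is exactly the standard verification the paper leaves implicit.
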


The contraction operator which removes $\xi^*_i$ from a wedge product in $\bigwedge F_\xi^*$ can be written $(\xi_i^*)^* \lrcorner (-)$ or $(\xi_i^*)^*$ for short, but this is awkward. Even worse, the operation of wedge product $\xi_i^* \wedge (-)$ in this exterior algebra cannot be safely abbreviated to $\xi_i^*$ because some of our formulas will involve precisely the same notation to denote the contraction operator on $\bigwedge F_\xi$. So we introduce the following notational convention:

\begin{definition}\label{defn:bar_convention} We write $\Bar{\xi}_i$ for $\xi_i^*$ and $F_{\Bar{\xi}} = F_\xi^*$ so that as operators on $\bigwedge F_\xi^*$ we have
\[
\Bar{\xi}_i = \xi_i^* \wedge (-)\,, \qquad \Bar{\xi}_i^* = (\xi_i^*)^* \lrcorner (-)\,.
\]
The same conventions apply to $\eta$ and any other odd generators.
\end{definition}

Using $\nu$ we may identify $\HH(X,Y)$ as a $\nZ_2$-graded $k$-module with
\be\label{eq:presentationHHalt}
\bigwedge \big( F_\theta \oplus F_\eta \oplus F_{\Bar{\xi}} \big) \otimes R/I \otimes k\llbracket \bold{t} \rrbracket
\ee
which we view as the tensor product of a $k$-module of coefficients $R/I$ with the (completed) bosonic Fock space $k\llbracket \bold{t} \rrbracket$ with creation and annihilation operators $t_i, \partial_{t_i}$ and fermionic Fock spaces $\bigwedge F_{\Bar{\xi}}, \bigwedge F_\eta, \bigwedge F_\theta$ with creation operators $\Bar{\xi}_i, \eta_j, \theta_k$ and annihilation operators $\Bar{\xi}_i^*, \eta_j^*, \theta_k^*$ respectively. 

\subsection{Diagrams for $\vAt_{\AA}, \delta$}\label{section:fenyman_diagram_1}

In the notation of the previous section we now elaborate on the explicit formulas for $\vAt_{\AA},\delta$ in terms of creation and annihilation operators.

\begin{lemma}\label{lemma:transfer} The operator $(*)$ which makes the diagram
\[
\xymatrix@C+3pc@R+2pc{
\Hom_k( \widetilde{X}, \widetilde{Y} ) \otimes R \ar[d]_-{d_{\AA}} \ar[r]_-{\cong}^-{ \nu \otimes 1} & \bigwedge F_\eta \otimes \bigwedge F_{\Bar{\xi}} \otimes R \ar[d]^-{(*)}\\
\Hom_k( \widetilde{X}, \widetilde{Y} ) \otimes R \ar[r]^-{\cong}_-{ \nu \otimes 1 } & \bigwedge F_\eta \otimes \bigwedge F_{\Bar{\xi}} \otimes R
}
\]
is given by the formula
\[
\sum_{j=1}^s u_j \eta_j^* + \sum_{j=1}^s v_j \eta_j - \sum_{i=1}^r f_i \Bar{\xi}_i + \sum_{i=1}^r g_i \Bar{\xi}_i^*\,.
\]
\end{lemma}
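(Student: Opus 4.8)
The plan is to split the differential $d_{\AA} = d_{\Hom}$ into its two constituent operations and recognise each of them under the isomorphism $\nu \otimes 1$. Since the polynomial entries $f_i, g_i, u_j, v_j$ are central in $R$, the operator $d_{\Hom}$ is $R$-linear, so it suffices to understand it as a $k$-linear operator on $\Hom_k(\widetilde{X}, \widetilde{Y})$ extended $R$-linearly, and then to treat \emph{post-composition with $d_Y$} and \emph{pre-composition with $d_X$} separately, recalling that $d_{\Hom}(\phi) = d_Y \circ \phi - (-1)^{|\phi|}\phi \circ d_X$ with $d_X = \sum_i f_i\, \xi_i^* + \sum_i g_i\, \xi_i$ on $\widetilde{X} = \bigwedge F_\xi$ and $d_Y = \sum_j u_j\, \eta_j^* + \sum_j v_j\, \eta_j$ on $\widetilde{Y} = \bigwedge F_\eta$.

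For post-composition the point is that $\nu$ is natural in the target: for any $k$-linear operator $B$ on $\widetilde{Y}$ one has $\nu(B \circ \phi) = (B \otimes 1_{\bigwedge F_{\Bar{\xi}}})(\nu(\phi))$, since $\nu$ sees $\phi$ only through the values $\phi(\xi_{i_1}\cdots\xi_{i_p})$, which get replaced by $B\phi(\xi_{i_1}\cdots\xi_{i_p})$ with no Koszul sign (the operator $1$ on the right factor is even). Taking $B = d_Y$ extended $R$-linearly produces exactly the summand $\sum_j u_j\, \eta_j^* + \sum_j v_j\, \eta_j$ of $(*)$, the operators $\eta_j, \eta_j^*$ now acting on the $\bigwedge F_\eta$ factor of \eqref{eq:presentationHHalt}.

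For pre-composition the structural input is that $\nu$ is the composite of the canonical isomorphism $\Hom_k(\widetilde{X}, \widetilde{Y}) \cong \widetilde{Y} \otimes (\widetilde{X})^*$ with the identification $(\bigwedge F_\xi)^* \cong \bigwedge F_\xi^* = \bigwedge F_{\Bar{\xi}}$, the factors $(-1)^{\binom{p}{2}}$ being precisely what makes the latter compatible with the two exterior-algebra structures. Under this description, pre-composition with a $k$-linear operator $A$ on $\widetilde{X}$ becomes, up to the Koszul sign $(-1)^{|A||\phi|}$, the operator $1 \otimes A^\vee$, where $A^\vee$ is the graded transpose of $A$ transported to $\bigwedge F_{\Bar{\xi}}$. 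One then identifies these transposes via the standard wedge/contraction adjunction for the pairing $\bigwedge F_\xi \times \bigwedge F_{\Bar{\xi}} \to k$: the transpose of $\xi_i \wedge (-)$ is $\Bar{\xi}_i^* = (\xi_i^*)^* \lrcorner (-)$ and the transpose of $\xi_i^* \lrcorner (-)$ is $\Bar{\xi}_i = \xi_i^* \wedge (-)$, each up to an explicit sign. Substituting $A = d_X$ and combining with the overall factor $-(-1)^{|\phi|}$ in $d_{\Hom}$ then yields $-\sum_i f_i\, \Bar{\xi}_i + \sum_i g_i\, \Bar{\xi}_i^*$, and adding the two contributions gives $(*)$.

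The real work, and the only subtle point, is the sign bookkeeping: the factors $(-1)^{\binom{p}{2}}$ in the definition of $\nu$, the Koszul signs incurred when the odd operators $\xi_i, \xi_i^*, \eta_j, \eta_j^*$ move past odd tensor factors, and the sign $(-1)^{|\phi|}$ in the Hom-differential must conspire to produce exactly the asymmetric signs in $(*)$ — notably the minus on the $f_i$-term against the plus on the $g_i$-term. I would pin these down by evaluating the pre-composition adjunction identities on the monomial basis of $\Hom_k(\widetilde{X}, \widetilde{Y})$ given by the maps $e_{J,I}$ sending $\xi_I \mapsto \eta_J$ and all other basis vectors to $0$; on such a $\phi$ everything reduces to a short computation about inserting or deleting a single generator $\xi_i^*$ from an ordered wedge $\Bar{\xi}_{i_1} \cdots \Bar{\xi}_{i_p}$ and its interplay with the $(-1)^{\binom{p}{2}}$ normalisation. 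Equivalently, and perhaps more transparently, one can bypass transposes altogether and simply compute $\nu(d_{\Hom}(e_{J,I}))$ and $(*)(\nu(e_{J,I}))$ directly and match them term by term; the content is the same elementary but delicate sign ledger.
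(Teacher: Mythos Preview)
Your proposal is correct and takes essentially the same approach as the paper, which records the proof simply as ``By direct calculation.'' Your decomposition into post-composition (naturality of $\nu$ in the target) and pre-composition (transpose under the pairing $\bigwedge F_\xi \times \bigwedge F_{\Bar{\xi}} \to k$), together with the explicit acknowledgement that the only content is the sign ledger and the suggestion to verify on the basis $e_{J,I}$, is exactly the direct calculation the paper has in mind.
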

\begin{proof}
By direct calculation.
\end{proof}

With this notation, the operator $\vAt_{\AA}$ on $\HH(X,Y)$ corresponds to an operator on \eqref{eq:presentationHHalt} given by the following formula (we use the superscript $\nu$ to record that this isomorphism is used to transfer $\vAt_{\AA}$) 
\begin{align}
\vAt^{\nu}_{\AA} &= [ d_{\AA}, \nabla ] = \sum_{k=1}^n \theta_k [ \partial_{t_k}, d_{\AA} ]\nonumber\\
&= \sum_{k=1}^n \sum_{j=1}^s \theta_k \partial_{t_k} (u_j) \eta_j^* + \sum_{k=1}^n \sum_{j=1}^s \theta_k \partial_{t_k}( v_j ) \eta_j\label{eq:vat_formula}\\
& \qquad - \sum_{k=1}^n \sum_{i=1}^r \theta_k \partial_{t_k}(f_i) \Bar{\xi}_i + \sum_{k=1}^n \sum_{i=1}^r \theta_k \partial_{t_k}(g_i) \Bar{\xi}_i^*\nonumber
\end{align}
where for an element $r \in R$ what we mean by $\partial_{t_k}(r)$ is the $k$-linear operator on $R/I \otimes k\llbracket \bold{t} \rrbracket$ which is the commutator of $\partial_{t_k}$ with the operator $r^{\#}$ of Definition \ref{defn:rsharp}. By Lemma \ref{lemma:rsharp_explicit} we can write this explicitly in terms of the multiplication tensor $\Gamma$ of Definition \ref{defn_gamma} as
\be
\partial_{t_k}(r)(z_h \otimes t^{\tau}) = \sum_{l=1}^\mu \sum_{\delta \in \mathbb{N}^n} \Big[ \sum_{\alpha + \beta = \delta } \sum_{m=1}^\mu r_{(m,\alpha)} \Gamma^{mh}_{l\beta} \Big] z_l \otimes \partial_{t_k}(t^\delta) t^{\tau}
\ee
where the coefficients $r_{(m,\alpha)} \in k$ are as in Definition \ref{defn:rsharp}, $\{ z_h \}_{h=1}^\mu$ is our chosen $k$-basis of $R/I$ and $\Gamma$ is the multiplication tensor. Reading $\partial_{t_k}(t^\delta)$ as the operator of left multiplication by this monomial, we may write
\be\label{eq:partial_derivative_op}
\partial_{t_k}(r) = \sum_{h=1}^\mu \sum_{l=1}^\mu \sum_{\delta \in \mathbb{N}^n} \Big[ \sum_{\alpha + \beta = \delta } \sum_{m=1}^\mu r_{(m,\alpha)} \Gamma^{mh}_{l\beta} \Big] z_l \partial_{t_k}(t^\delta) z_h^*\,.
\ee
Next we describe the operators $\delta = \sum_{k=1}^n \lambda^\bullet_k \theta_k^*$ and for this we need to choose a particular homotopy $\lambda^Y_k: Y \lto Y$ with $[ \lambda^Y_k, d_Y ] = t_k \cdot 1_Y$. Recall that $t_1,\ldots,t_n$ is a quasi-regular sequence satisfying some hypotheses satisfied in particular by the partial derivatives of the potential $W$, but other choices are possible. We assume here that our homotopies $\lambda^Y_k$ are chosen to be of the form
\be\label{eq:formula_lambdak_koszul}
\lambda^Y_k = \sum_{j=1}^s F_{kj} \eta_j^* + \sum_{j=1}^s G_{kj} \eta_j
\ee
for polynomials $\{ F_{kj}, G_{kj} \}_{j=1}^s$ in $R$ which satisfy the equations
\[
\sum_{j=1}^s( F_{kj} v_j + G_{kj} u_j ) = t_k \qquad 1 \le k \le n\,.
\]

\begin{remark}\label{remark:default_homotopies}
If $\bold{t} = (\partial_{x_1} W, \ldots, \partial_{x_n} W)$ then $F_{kj} = \partial_{x_k}(u_j), G_{kj} = \partial_{x_k}(v_j)$ satisfy these requirements and hence define a valid sequence of homotopies $\lambda^Y_1,\ldots,\lambda^Y_n$.
\end{remark}

The operator $\lambda_k^\bullet$ of Definition \ref{defn:important_operators} acts by post-composition with $\lambda^Y_k$, and so the corresponding operator on \eqref{eq:presentationHHalt} under $\nu$ is by the same calculation as Lemma \ref{lemma:transfer} given by the formula \eqref{eq:formula_lambdak_koszul}. In this notation (again using a superscript $\nu$ to indicate the transfer)
\be\label{eq:delta_formula}
\delta^\nu = \sum_{k=1}^n \lambda^\bullet_k \theta_k^* = \sum_{k=1}^n \sum_{j=1}^s F_{kj} \eta_j^* \theta_k^* + \sum_{k=1}^n \sum_{j=1}^s G_{kj} \eta_j \theta_k^*\,.
\ee
Combining \eqref{eq:vat_formula} and \eqref{eq:partial_derivative_op} yields:

\begin{lemma}
The critical Atiyah class $\vAt_{\AA}$ may be presented using $\nu$ as an operator on \eqref{eq:presentationHHalt} given by the sum of the four terms given below, each of which is itself summed over the indices $1 \le h,l \le \mu, 1 \le k \le n$ and $\delta \in \mathbb{N}^n$:
\begin{align}
& \sum_{j=1}^s \Big[ \sum_{\alpha + \beta = \delta } \sum_{m=1}^\mu (u_j)_{(m,\alpha)} \Gamma^{m h}_{l \beta} \Big] \theta_k z_{l} \partial_{t_k}(t^\delta) z_h^* \eta_j^*  \qquad (\textup{A}.1)^{\nu} \label{eq:a1vertex}\\
& \sum_{j=1}^s \Big[ \sum_{\alpha + \beta = \delta } \sum_{m=1}^\mu (v_j)_{(m,\alpha)} \Gamma^{mh}_{l\beta} \Big] \theta_k z_l \partial_{t_k}(t^\delta) \eta_j z_h^*  \qquad (\textup{A}.2)^{\nu} \label{eq:a2vertex}\\
-&\sum_{i=1}^r \Big[ \sum_{\alpha + \beta = \delta } \sum_{m=1}^\mu (f_i)_{(m,\alpha)} \Gamma^{mh}_{l\beta} \Big] \theta_k z_l \partial_{t_k}(t^\delta) \Bar{\xi}_i z_h^*  \qquad (\textup{A}.3)^{\nu} \label{eq:a3vertex}\\
&\sum_{i=1}^r \Big[ \sum_{\alpha + \beta = \delta } \sum_{m=1}^\mu (g_i)_{(m,\alpha)} \Gamma^{mh}_{l\beta} \Big] \theta_k z_l \partial_{t_k}(t^\delta) z_h^* \Bar{\xi}_i^* \qquad (\textup{A}.4)^{\nu}\,. \label{eq:a4vertex}
\end{align}
Schematically, we can write
\be
\vAt^\nu_{\AA} = \sum_{h,l=1}^\mu \sum_{k=1}^n\sum_{\delta \in \mathbb{N}^n} \Big[ (\textup{A}.1)^{\nu} + (\textup{A}.2)^{\nu} + (\textup{A}.3)^{\nu} + (\textup{A}.4)^{\nu} \Big]\,.
\ee
\end{lemma}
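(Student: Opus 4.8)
The plan is to substitute the explicit description \eqref{eq:partial_derivative_op} of the partial-derivative operators $\partial_{t_k}(r)$ into the formula \eqref{eq:vat_formula} for $\vAt^\nu_{\AA}$, and to sort the resulting summands into the four families indexed by the polynomial data $u_j,v_j,f_i,g_i$ defining the Koszul matrix factorisations $X,Y$. No new content beyond the two results already in hand is required; the only step calling for care — and where a slip would be invisible — is the sign and operator-ordering bookkeeping, so I describe that explicitly below.

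First I would re-derive \eqref{eq:vat_formula} for the record. By Definition \ref{defn:atiyah_class}, $\vAt_{\AA} = [d_{\AA},\nabla]$ with $\nabla = \sum_{k=1}^n \theta_k\partial_{t_k}$; since $d_{\AA}$ does not involve the exterior variables $\theta_k$, the Koszul rule gives $d_{\AA}\theta_k = (-1)^{|d_{\AA}||\theta_k|}\theta_k d_{\AA} = -\theta_k d_{\AA}$ (both operators being odd), whence $[d_{\AA},\nabla] = \sum_{k=1}^n \theta_k[\partial_{t_k},d_{\AA}]$, where now $[\partial_{t_k},d_{\AA}] = \partial_{t_k}d_{\AA} - d_{\AA}\partial_{t_k}$ is the ordinary commutator because $\partial_{t_k}$ is an even operator on the bosonic factor $R/I\otimes k\llbracket\bold{t}\rrbracket$. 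Inserting the description of $d_{\AA}$ (transported through $\nu$) provided by Lemma \ref{lemma:transfer}, namely the operator $(*)$ given by multiplication by $u_j$ against $\eta_j^*$, by $v_j$ against $\eta_j$, by $-f_i$ against $\Bar{\xi}_i$, and by $g_i$ against $\Bar{\xi}_i^*$, and using that $\partial_{t_k}$ commutes with each of $\eta_j^*,\eta_j,\Bar{\xi}_i,\Bar{\xi}_i^*$, the commutator distributes onto the multiplication operators and yields \eqref{eq:vat_formula}, with the sign on the $f_i$-term carried through.

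Second, I would expand each $\partial_{t_k}(r)$ for $r\in\{u_j,v_j,f_i,g_i\}$ by Lemma \ref{lemma:rsharp_explicit} in the operator form \eqref{eq:partial_derivative_op}, writing $\partial_{t_k}(r) = \sum_{h,l=1}^\mu\sum_{\delta\in\mathbb{N}^n}\big[\sum_{\alpha+\beta=\delta}\sum_{m=1}^\mu r_{(m,\alpha)}\Gamma^{mh}_{l\beta}\big]\,z_l\,\partial_{t_k}(t^\delta)\,z_h^*$ with $\partial_{t_k}(t^\delta)$ read as left multiplication by that monomial. Since $z_l\,\partial_{t_k}(t^\delta)\,z_h^*$ is an even operator acting only on $R/I\otimes k\llbracket\bold{t}\rrbracket$, it may be slotted between $\theta_k$ on the left and the relevant $\eta$- or $\Bar{\xi}$-operator on the right with no Koszul sign; collecting the contributions of $r = u_j,v_j,f_i,g_i$ then produces exactly $(\textup{A}.1)^\nu$, $(\textup{A}.2)^\nu$, $(\textup{A}.3)^\nu$ (with the inherited minus sign) and $(\textup{A}.4)^\nu$, each summed over $1\le h,l\le\mu$, $1\le k\le n$ and $\delta\in\mathbb{N}^n$, which is the schematic formula asserted. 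The main (minor) obstacle is thus only to verify that anticommuting $\theta_k$ past the odd operator $d_{\AA}$ returns $+\theta_k[\partial_{t_k},d_{\AA}]$ with no stray sign, and that moving the purely bosonic operator from \eqref{eq:partial_derivative_op} into place is sign-free — both immediate once one notes that $d_{\AA}$ is $\theta$-independent and that $\partial_{t_k}(t^\delta)$, $z_l$, $z_h^*$ are all even.
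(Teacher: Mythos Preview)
Your proposal is correct and follows exactly the paper's own approach: the paper simply says ``Combining \eqref{eq:vat_formula} and \eqref{eq:partial_derivative_op} yields'' before stating the lemma, and you have carried out precisely that substitution, with the additional service of spelling out the sign and operator-ordering checks that the paper leaves implicit.
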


Combining \eqref{eq:delta_formula} and \eqref{eq:partial_derivative_op} yields:

\begin{lemma}
The operator $\delta$ may be presented using $\nu$ as an operator on \eqref{eq:presentationHHalt} given by the sum of the two terms given below, each of which is itself summed over the indices $1 \le h,l \le \mu, 1 \le k \le n$ and $\delta \in \mathbb{N}^n$:
\begin{align}
& \sum_{j=1}^s \Big[ \sum_{\alpha + \beta = \delta } \sum_{m=1}^\mu (F_{kj})_{(m,\alpha)} \Gamma^{m h}_{l \beta} \Big] z_{l} t^\delta \eta_j^* z_h^* \theta_k^* \qquad (\textup{C}.1)^{\nu}\label{eq:c1vertex}\\
& \sum_{j=1}^s \Big[ \sum_{\alpha + \beta = \delta } \sum_{m=1}^\mu (G_{kj})_{(m,\alpha)} \Gamma^{m h}_{l \beta} \Big] z_{l} t^\delta \eta_j z_h^* \theta_k^* \qquad (\textup{C}.2)^{\nu}\,.\label{eq:c2vertex}
\end{align}
Schematically, we can write
\be
\delta^\nu = \sum_{h,l=1}^\mu \sum_{k=1}^n\sum_{\delta \in \mathbb{N}^n} \Big[ (\textup{C}.1)^{\nu} + (\textup{C}.2)^{\nu} \Big]\,.
\ee
\end{lemma}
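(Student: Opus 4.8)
The statement is a direct assembly of the two preceding results, so the plan is essentially organised bookkeeping. First I would record the homotopy analogue of Lemma \ref{lemma:transfer}: since $\lambda^Y_k$ has the Koszul form \eqref{eq:formula_lambdak_koszul} and $\lambda_k^\bullet$ acts by post-composition with $\lambda^Y_k$, the same direct calculation as in Lemma \ref{lemma:transfer} — now using only the post-composition (``$d_Y$'') part of that argument and none of the pre-composition (``$d_X$'') part, so that no $\Bar{\xi}$-operators appear — shows that, transferred along $\nu$ and the isomorphism \eqref{eq:transfer_iso_intro}, the operator $\lambda_k^\bullet$ on $\HH(X,Y)$ becomes the operator $\sum_{j=1}^s (F_{kj})^{\#}\eta_j^* + \sum_{j=1}^s (G_{kj})^{\#}\eta_j$ on the space \eqref{eq:presentationHHalt}, where $(F_{kj})^{\#},(G_{kj})^{\#}$ are the $k\llbracket \bold{t}\rrbracket$-linear multiplication operators of Definition \ref{defn:rsharp}. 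This is exactly the content already encoded in \eqref{eq:delta_formula}, with the convention that the polynomials $F_{kj},G_{kj}$ appearing there act on $R/I\otimes k\llbracket \bold{t}\rrbracket$ via $(-)^{\#}$ rather than by naive multiplication on $R$.

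Next I would substitute this into $\delta^\nu=\sum_{k=1}^n\lambda_k^\bullet\theta_k^*$, which gives $\delta^\nu=\sum_{k,j}(F_{kj})^{\#}\eta_j^*\theta_k^*+\sum_{k,j}(G_{kj})^{\#}\eta_j\theta_k^*$, and then expand each multiplication operator $(F_{kj})^{\#}$ and $(G_{kj})^{\#}$ using Lemma \ref{lemma:rsharp_explicit} (cf.\ the analogous expansion \eqref{eq:partial_derivative_op}), which writes any $r^{\#}$ as $\sum_{h,l=1}^\mu\sum_{\delta\in\mathbb{N}^n}\bigl[\sum_{\alpha+\beta=\delta}\sum_{m=1}^\mu r_{(m,\alpha)}\Gamma^{mh}_{l\beta}\bigr]\,z_l\,t^\delta\,z_h^*$ as an operator on $R/I\otimes k\llbracket \bold{t}\rrbracket$. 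Plugging in $r=F_{kj}$, respectively $r=G_{kj}$, and collecting all the sums over $h,l,k,\delta$ produces precisely the two terms $(\textup{C}.1)^\nu$ and $(\textup{C}.2)^\nu$, up only to the position of the coefficient-extraction operator $z_h^*$ relative to the fermionic operators $\eta_j^*$ and $\eta_j$.

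Hence the only genuine verification — and the single place where a sign could in principle appear — is operator ordering: one must check that transposing $z_h^*$ (a $\nZ_2$-even operator on the even module $R/I$) past $\eta_j^*$ or $\eta_j$, and past the even operators $z_l$ and $t^\delta$, contributes no Koszul sign, and that in the composite $\lambda_k^\bullet\circ\theta_k^*$ the annihilation operator $\theta_k^*$ correctly sits to the right of everything produced by $\lambda_k^\bullet$ (which it does, since $\theta_k^*$ is applied first). Since $z_l,t^\delta,z_h^*$ all lie in degree $0$, every such transposition is trivial, so no signs arise and the displayed schematic $\delta^\nu=\sum_{h,l=1}^\mu\sum_{k=1}^n\sum_{\delta\in\mathbb{N}^n}\bigl[(\textup{C}.1)^\nu+(\textup{C}.2)^\nu\bigr]$ holds verbatim. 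The ``main obstacle'', such as it is, is purely clerical; the mathematical content was already established in Lemma \ref{lemma:transfer}, Lemma \ref{lemma:rsharp_explicit} and the derivation of \eqref{eq:delta_formula}.
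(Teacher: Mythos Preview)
Your proposal is correct and follows exactly the same route as the paper, which proves the lemma in a single sentence (``Combining \eqref{eq:delta_formula} and \eqref{eq:partial_derivative_op} yields''): you simply spell out that combination in full, correctly invoking Lemma \ref{lemma:rsharp_explicit} (the source of \eqref{eq:partial_derivative_op}) to expand $(F_{kj})^{\#}$ and $(G_{kj})^{\#}$, and add the routine sign/ordering check. There is nothing to add.
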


Each of these monomials in creation and annihilation operators is associated with its own type of \emph{interaction vertex} in our Feynman diagrams. Eventually these vertices will be drawn on the same trees used to define the $A_\infty$-products $\rho_k$ and they will be given a formal interpretation by the Feynman rules, but for the moment they are just pictures. In our description we tend to imagine time evolving from from top of the page (the input) to the bottom (the output). At an interaction vertex associated with a monomial, each annihilation operator becomes an \emph{incoming line} (entering the vertex from above) and each creation operator becomes an \emph{outgoing line} (leaving the vertex downward). Different line styles are used to distinguish creation and annihilation operators of different ``types''.

\begin{center}
\begin{tabular}{ >{\centering}m{8cm} >{\centering}m{6cm} }
\[
\xymatrix@C+2pc@R+3pc{
& \ar@{.}[d]^-{h} & \ar[dl]^-{\eta_j}\\
& \bullet \ar@{=}[dl]^-{\theta_k} \ar@{.}[d]^-{l} \ar@{~}[dr]^-{\partial_{t_k}(t^\delta)}\\
& &
}
\]
&
\textbf{(A.1)${}^\nu$}
\vspace{1cm}
\[\sum_{\alpha + \beta = \delta } \sum_{m=1}^\mu (u_j)_{(m,\alpha)} \Gamma^{m h}_{l \beta}\]
\vspace{0.5cm}
$\theta_k z_{l} \partial_{t_k}(t^\delta) z_h^* \eta_j^*$
\end{tabular}
\end{center}

\begin{center}
\begin{tabular}{ >{\centering}m{8cm} >{\centering}m{6cm} }
\[
\xymatrix@C+1pc@R+3pc{
& & \ar@{.}[d]^-{h}\\
& & \bullet \ar@{=}[dll]_-{\theta_k} \ar@{.}[dl]^-{l} \ar@{~}[dr]_-{\partial_{t_k}(t^\delta)} \ar[drr]^-{\eta_j}\\
& & & &
}
\]
&
\textbf{(A.2)${}^\nu$}
\vspace{1cm}
\[ \sum_{\alpha + \beta = \delta } \sum_{m=1}^\mu (v_j)_{(m,\alpha)} \Gamma^{mh}_{l\beta}\]
\vspace{0.5cm}
$\theta_k z_l \partial_{t_k}(t^\delta) \eta_j z_h^*$
\end{tabular}
\end{center}

\begin{center}
\begin{tabular}{ >{\centering}m{8cm} >{\centering}m{6cm} }
\[
\xymatrix@C+1pc@R+3pc{
& & \ar@{.}[d]^-{h}\\
& & \bullet \ar@{=}[dll]_-{\theta_k} \ar@{.}[dl]^-{l} \ar@{~}[dr]_-{\partial_{t_k}(t^\delta)}\\
& & & & \ar[ull]_-{\xi_i}
}
\]
&
\textbf{(A.3)${}^\nu$}
\vspace{1cm}
\[- \sum_{\alpha + \beta = \delta } \sum_{m=1}^\mu (f_i)_{(m,\alpha)} \Gamma^{mh}_{l\beta}\]
\vspace{0.5cm}
$\theta_k z_l \partial_{t_k}(t^\delta) \Bar{\xi}_i z_h^*$
\end{tabular}
\end{center}

\begin{center}
\begin{tabular}{ >{\centering}m{8cm} >{\centering}m{6cm} }
\[
\xymatrix@C+2pc@R+3pc{
& \ar@{.}[d]^-{h} &\\
& \bullet \ar[ur]_-{\xi_i} \ar@{=}[dl]^-{\theta_k} \ar@{.}[d]^-{l} \ar@{~}[dr]^-{\partial_{t_k}(t^\delta)}\\
& &
}
\]
&
\textbf{(A.4)${}^\nu$}
\vspace{1cm}
\[ \sum_{\alpha + \beta = \delta } \sum_{m=1}^\mu (g_i)_{(m,\alpha)} \Gamma^{mh}_{l\beta} \]
\vspace{0.5cm}
$\theta_k z_l \partial_{t_k}(t^\delta) z_h^* \Bar{\xi}_i^*$
\end{tabular}
\end{center}

\begin{center}
\begin{tabular}{ >{\centering}m{8cm} >{\centering}m{6cm} }
\[
\xymatrix@R+3pc{
\ar@{~}[d]^-{t_k}\\
\bullet \ar@{=}[d]^-{\theta_k}\\
\;
}
\]
&
\textbf{(B)}
\vspace{1cm}
\[\theta_k \partial_{t_k}\]
\end{tabular}
\end{center}

\begin{center}
\begin{tabular}{ >{\centering}m{8cm} >{\centering}m{6cm} }
\[
\xymatrix@C+2pc@R+3pc{
\ar[dr]_-{\eta_j} & \ar@{.}[d]^-{h} & \ar@{=}[dl]^-{\theta_k}\\
& \bullet \ar@{.}[d]^-{l} \ar@{~}[dl]^-{t^\delta}\\
& &
}
\]
&
\textbf{(C.1)${}^\nu$}
\vspace{1cm}
\[\sum_{\alpha + \beta = \delta } \sum_{m=1}^\mu (F_{kj})_{(m,\alpha)} \Gamma^{m h}_{l \beta}\]
\vspace{0.5cm}
$z_{l} t^\delta \eta_j^* z_h^* \theta_k^*$
\end{tabular}
\end{center}

\begin{center}
\begin{tabular}{ >{\centering}m{8cm} >{\centering}m{6cm} }
\[
\xymatrix@C+2pc@R+3pc{
& \ar@{.}[d]^-{h} & \ar@{=}[dl]^-{\theta_k} \\
& \bullet \ar[dl]^-{\eta_j}\ar@{.}[d]^-{l} \ar@{~}[dr]^-{t^\delta}\\
& &
}
\]
&
\textbf{(C.2)${}^\nu$}
\vspace{1cm}
\[\sum_{\alpha + \beta = \delta } \sum_{m=1}^\mu (G_{kj})_{(m,\alpha)} \Gamma^{m h}_{l \beta}\]
\vspace{0.5cm}
$z_{l} t^\delta \eta_j z_h^* \theta_k^*$
\end{tabular}
\end{center}

We refer to the vertices \eqref{eq:a1vertex}-\eqref{eq:a4vertex} arising from the Atiyah class respectively as \emph{A-type vertices} (A.1, A.2, A.3, A.4) and the vertices \eqref{eq:c1vertex},\eqref{eq:c2vertex} arising from $\delta$ as \emph{C-type vertices} (C.1, C.2). There are also \emph{B-type vertices} given by the operator $\nabla = \sum_{k=1}^n \theta_k \partial_{t_k}$.

\begin{remark} Some remarks on these diagrams:
\begin{itemize} 

\item We do not think of the vectors in $\HH$ as states of literal particles (this word is generally reserved for state spaces transforming as representations of the inhomogeneous Lorentz group \cite{weinberg}) but the physics terminology is convenient and we sometimes refer to \emph{bosons} (the $t_i$) and \emph{fermions} (the $\xi_i, \eta_j, \theta_k$). Another useful concept is that of \emph{virtual particles} which is a term used to refer to lines propagating in the interior of Feynman diagrams. In our diagrams this role is played by the bosons $t_i$ and fermions $\theta_i$ (hence the virtual degree of Definition \ref{definition:zeta}) which represent the degrees of freedom that are being ``integrated out'' by the process of computing the $A_\infty$-products. 

 
\item Following standard conventions bosons (commuting generators) are denoted by wiggly or dashed lines, and fermions (anticommuting generators) by solid lines \cite[\S 4.7]{ps} (perhaps doubled). For simplicity we distinguish the $\xi$ and $\eta$ lines only by their labels and we write $h$ for a line labelled $z_h$. Strictly speaking a squiggly line labelled $t^\delta$ should be interpreted as $\delta_i$ lines labelled $t_i$ for $1 \le i \le n$. We use the orientation on a fermion line to determine whether it should be read as a creation or annihilation operator for $\xi$ (downward) or for $\Bar{\xi}$ (upward).

\item Each vertex above actually represents a family indexed by possible choices of indices. If we wish to speak about a specific instance we use subscripts, for example $(\textup{B})_{k=2}$ is the interaction vertex with an incoming $t_2$ and outgoing $\theta_2$.\footnote{Depending on the matrix factorisations involved, some families of A or C-type interaction vertex may be \emph{infinite}. For example, if $(f_i)_{(m, \alpha)}$ is nonzero for infinitely many $\alpha$ there may be infinitely many $(A.3)^{\nu}$ vertices with nonzero coefficients. However only finitely many distinct types of interaction vertices can contribute for any particular tree $T$. If $T$ has $k$ leaves there are $k - 2$ internal edges and hence $k - 2$ occurrences of $\nabla$ in the associated operator. In terms of Feynman diagrams, that means there are precisely $k - 2$ B-type interactions. Since each $t_i$ that is generated in a Feynman diagram must eventually annihilate with a $\partial_{t_i}$ at a B-type interaction vertex, and each interaction vertex with indices $\alpha, \beta$ generates either $|\delta|$ or $|\delta| - 1$ copies of the $t_i$'s, only coefficients with $|\alpha|, |\beta| \le k - 2$ contribute. In short, larger trees can support more ``virtual bosons'' and more complex interactions.}
\end{itemize}
\end{remark}

\subsection{Alternative isomorphism $\rho$}\label{section:altrho}

We have used the isomorphism $\nu$ to present operators on $\HH(X,Y)$ as creation and annihilation operators. In the case $X = Y$ there is an alternative isomorphism $\rho$ which leads to less interaction vertices.

\begin{lemma}\label{lemma:iso_rho} There is an isomorphism of $\nZ_2$-graded $k$-modules
\begin{gather*}
\rho: \bigwedge F_\xi \otimes \bigwedge F_{\Bar{\xi}} \lto \End_k\big( \bigwedge F_\xi \big)\\
\rho\big( \xi_{i_1} \wedge \cdots \wedge \xi_{i_a} \otimes \Bar{\xi}_{j_1} \wedge \cdots \wedge \Bar{\xi}_{j_b} \big) = \xi_{i_1} \circ \cdots \circ \xi_{i_a} \circ \xi_{j_1}^* \circ \cdots \circ \xi_{j_b}^*
\end{gather*}
where on the right hand $\xi_i, \xi_j^*$ denote the usual operators $\xi_i = \xi_i \wedge (-)$ and $\xi_j^* = \xi_j^* \lrcorner (-)$.
\end{lemma}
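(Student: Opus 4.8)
The plan is to check in turn that the stated formula for $\rho$ is well defined, that $\rho$ has $\nZ_2$-degree zero, and that $\rho$ is bijective; only the last point requires real work, and even there the content is a rank count together with a triangularity observation.

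For well-definedness I would appeal to the universal property of the exterior algebra. The assignment $\xi_i \mapsto \xi_i \wedge (-)$ defines a $k$-linear map $F_\xi \lto \End_k(\bigwedge F_\xi)$ which squares to zero on every element of $F_\xi$ (because $\xi_i \wedge \xi_i = 0$ and the operators $\xi_i \wedge (-)$ pairwise anticommute), so it extends uniquely to a $k$-algebra homomorphism $L: \bigwedge F_\xi \lto \End_k(\bigwedge F_\xi)$; similarly $\Bar{\xi}_j = \xi_j^* \mapsto \xi_j^* \lrcorner (-)$ extends to a $k$-algebra homomorphism $C: \bigwedge F_{\Bar{\xi}} \lto \End_k(\bigwedge F_\xi)$, using that the contractions pairwise anticommute, cf.\ \eqref{eq:wedge_contract_comm}. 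Then $(\omega, \eta) \mapsto L(\omega) \circ C(\eta)$ is $k$-bilinear, hence factors through a $k$-linear map $\bigwedge F_\xi \otimes \bigwedge F_{\Bar{\xi}} \lto \End_k(\bigwedge F_\xi)$ which on decomposable tensors of basis monomials is exactly the formula in the statement. The degree claim is then immediate: a monomial $\xi_{i_1} \wedge \cdots \wedge \xi_{i_a} \otimes \Bar{\xi}_{j_1} \wedge \cdots \wedge \Bar{\xi}_{j_b}$ and its image $\xi_{i_1} \circ \cdots \circ \xi_{i_a} \circ \xi_{j_1}^* \circ \cdots \circ \xi_{j_b}^*$ are both homogeneous of degree $a + b \bmod 2$.

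For bijectivity, set $r = \rank_k F_\xi$. Both $\bigwedge F_\xi \otimes \bigwedge F_{\Bar{\xi}}$ and $\End_k(\bigwedge F_\xi)$ are free $k$-modules of rank $(2^r)^2$, so it suffices to show $\rho$ is surjective, a surjective $k$-linear map between free modules of the same finite rank being automatically an isomorphism over any commutative ring. For $S, T \subseteq \{1, \ldots, r\}$ let $\xi_S$ and $\Bar{\xi}_T$ denote the increasing wedge monomials indexed by $S$ and $T$, and let $E_{S,T} \in \End_k(\bigwedge F_\xi)$ be the matrix unit determined by $E_{S,T}(\xi_U) = \delta_{TU} \, \xi_S$; these form a $k$-basis. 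A direct contraction computation gives
\[
\rho(\xi_S \otimes \Bar{\xi}_T)(\xi_U) =
\begin{cases}
\pm\, \xi_{S \sqcup (U \setminus T)} & \text{if } T \subseteq U \text{ and } S \cap (U \setminus T) = \emptyset,\\
0 & \text{otherwise,}
\end{cases}
\]
so that $\rho(\xi_S \otimes \Bar{\xi}_T) = \pm E_{S,T}$ modulo the $k$-span of the $E_{S',U}$ with $|U| > |T|$.

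Surjectivity then follows by downward induction on $|T|$. When $|T| = r$ there is no $U \supsetneq T$, so $\rho(\xi_S \otimes \Bar{\xi}_{\{1,\ldots,r\}}) = \pm E_{S, \{1,\ldots,r\}}$, and these matrix units lie in the image of $\rho$; and if every $E_{S',U}$ with $|U| > |T|$ already lies in the image, then the displayed identity exhibits $E_{S,T}$ as a $k$-linear combination of $\rho(\xi_S \otimes \Bar{\xi}_T)$ and such elements, hence in the image too. This gives surjectivity, and so $\rho$ is an isomorphism. The only slightly fiddly point is the Koszul-sign bookkeeping in the contraction computation, but it is entirely routine; alternatively one can avoid the induction altogether by observing that under the canonical identifications $\End_k(\bigwedge F_\xi) \cong \bigwedge F_\xi \otimes (\bigwedge F_\xi)^*$ and $(\bigwedge F_\xi)^* \cong \bigwedge F_{\Bar{\xi}}$ (for a finite-rank free module) the map $\rho$ differs from the identity by an operator that is unipotent with respect to the filtration by $|T|$, hence invertible.
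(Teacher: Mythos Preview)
Your proof is correct. The paper does not give an explicit argument for this lemma; instead, the subsequent Remark~\ref{remark_rhoisoalg} explains that $\bigwedge F_\xi \otimes \bigwedge F_{\Bar{\xi}}$ is identified with the Clifford algebra $C$ on generators $\xi_i,\Bar{\xi}_i$ with relations $[\xi_i,\xi_j]=[\Bar{\xi}_i,\Bar{\xi}_j]=0$ and $[\xi_i,\Bar{\xi}_j]=\delta_{ij}$, and that $\rho$ is the linear map underlying the standard algebra isomorphism $C \cong \End_k(\bigwedge F_\xi)$, which is the classical fact that this Clifford algebra acts faithfully and irreducibly on the spinor module $\bigwedge F_\xi$.

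Your route is more elementary and entirely self-contained: you bypass the Clifford algebra entirely and instead establish surjectivity by a triangularity argument with respect to the filtration by $|T|$, combined with the rank count. This has the virtue of working directly over an arbitrary commutative ring $k$ without needing to import any representation theory. The Clifford algebra viewpoint, on the other hand, is what the paper actually uses downstream (for instance in computing $\delta^\rho$ in Section~\ref{section:altrho}, where Clifford multiplication $\bullet$ appears explicitly), so the paper's framing is better aligned with its later needs even if it leaves the isomorphism itself unproven.
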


\begin{remark}\label{remark_rhoisoalg} Let $C$ be the $\nZ_2$-graded algebra generated by odd $\xi_i, \Bar{\xi}_i$ for $1 \le i \le r$ subject to the relations $[ \xi_i, \xi_j ] = [ \Bar{\xi}_i, \Bar{\xi}_j ] = 0$ and $[ \xi_i, \Bar{\xi}_j ] = \delta_{ij}$. We denote multiplication in this Clifford algebra by $\bullet$. There is an isomorphism of $\nZ_2$-graded $k$-modules
\begin{gather*}
\bigwedge F_\xi \otimes \bigwedge F_{\Bar{\xi}} \lto C\\
\xi_{i_1} \wedge \cdots \wedge \xi_{i_a} \otimes \Bar{\xi}_{j_1} \wedge \cdots \wedge \Bar{\xi}_{j_b} \mapsto \xi_{i_1} \bullet \cdots \bullet \xi_{i_a} \bullet \Bar{\xi}_{j_1} \bullet \cdots \bullet \Bar{\xi}_{j_b}\,.
\end{gather*}
Making this identification, $\rho$ is the linear map underlying an isomorphism of the Clifford algebra $C$ with the endomorphism algebra of $\bigwedge F_\xi$. In particular, the diagram
\be
\xymatrix@C+2pc{
\big( \bigwedge F_\xi \otimes \bigwedge F_{\Bar{\xi}} \big) \otimes \big( \bigwedge F_\xi \otimes \bigwedge F_{\Bar{\xi}} \big) \ar[r]^-{\rho \otimes \rho}\ar[d]_-\bullet & \End_k\big( \bigwedge F_\xi\big) \otimes \End_k\big( \bigwedge F_\xi \big) \ar[d]^-{- \circ -}\\
\bigwedge F_\xi \otimes \bigwedge F_{\Bar{\xi}} \ar[r]_-\rho & \End_k\big( \bigwedge F_\xi \big)
}
\ee
commutes, where on the left $\bullet$ is the multiplication in the Clifford algebra.
\end{remark}

\begin{lemma}\label{lemma:commutators_on_rho} The diagrams
\[
\xymatrix@C+2pc{
\bigwedge F_\xi \otimes \bigwedge F_{\Bar{\xi}} \ar[d]_-{\Bar{\xi}_i^*} \ar[r]^-{\rho} & \End_k\big( \bigwedge F_\xi \big) \ar[d]^-{[\xi_i, -]}\\
\bigwedge F_\xi \otimes \bigwedge F_{\Bar{\xi}} \ar[r]_-{\rho} & \End_k\big( \bigwedge F_\xi \big)
}
\qquad
\xymatrix@C+2pc{
\bigwedge F_\xi \otimes \bigwedge F_{\Bar{\xi}} \ar[d]_-{\xi_i^*} \ar[r]^-{\rho} & \End_k\big( \bigwedge F_\xi \big) \ar[d]^-{[ \xi_i^*, - ]}\\
\bigwedge F_\xi \otimes \bigwedge F_{\Bar{\xi}} \ar[r]_-{\rho} & \End_k\big( \bigwedge F_\xi \big)
}
\]
commute, where graded commutators are in the algebra $\End_k(\bigwedge F_\xi)$.
\end{lemma}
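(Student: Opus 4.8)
The plan is to deduce both commuting squares from the fact, established in Remark~\ref{remark_rhoisoalg}, that $\rho$ underlies an isomorphism of $k$-algebras $C \xlto{\cong} \End_k(\bigwedge F_\xi)$. First I would use the normal-ordering isomorphism of that remark to identify $\bigwedge F_\xi \otimes \bigwedge F_{\Bar{\xi}}$ with the Clifford algebra $C$, so that $\rho$ becomes the $k$-algebra isomorphism determined by $\xi_i \mapsto \xi_i\wedge(-)$ and $\Bar{\xi}_i \mapsto \xi_i^*\lrcorner(-)$. Being multiplicative, $\rho$ intertwines graded commutators: for $c \in C$,
\[
\rho\big([\xi_i,c]\big) = \big[\xi_i\wedge(-),\,\rho(c)\big]\,, \qquad \rho\big([\Bar{\xi}_i,c]\big) = \big[\xi_i^*\lrcorner(-),\,\rho(c)\big]\,,
\]
with the commutators on the right formed in $\End_k(\bigwedge F_\xi)$. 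So the two diagrams are equivalent to the assertions that, as operators on $C \cong \bigwedge F_\xi \otimes \bigwedge F_{\Bar{\xi}}$, we have $[\xi_i,-] = \Bar{\xi}_i^*$ and $[\Bar{\xi}_i,-] = \xi_i^*$, where $\xi_i^*$ and $\Bar{\xi}_i^*$ denote the contraction operators acting in the first, respectively the second, tensor factor.

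The substance of the proof is then a short computation inside $C$. For odd $x \in C$ the adjoint operator $\operatorname{ad}_x = [x,-]$ is an odd graded derivation, so $\operatorname{ad}_{\xi_i}$ and $\operatorname{ad}_{\Bar{\xi}_i}$ are determined by their effect on the generators, which by the Clifford relations is $\operatorname{ad}_{\xi_i}(\xi_j)=0$, $\operatorname{ad}_{\xi_i}(\Bar{\xi}_j)=\delta_{ij}$ and $\operatorname{ad}_{\Bar{\xi}_i}(\xi_j)=\delta_{ij}$, $\operatorname{ad}_{\Bar{\xi}_i}(\Bar{\xi}_j)=0$. Applying $\operatorname{ad}_{\xi_i}$ to a normal-ordered basis monomial $\xi_{i_1}\cdots\xi_{i_a}\,\Bar{\xi}_{j_1}\cdots\Bar{\xi}_{j_b}$ and using the graded Leibniz rule, every term in which $\operatorname{ad}_{\xi_i}$ hits one of the $\xi_{i_l}$ vanishes, and the remaining contribution is $(-1)^a$ times $\operatorname{ad}_{\xi_i}$ applied to $\Bar{\xi}_{j_1}\cdots\Bar{\xi}_{j_b}$; expanding the latter by the Leibniz rule and $\operatorname{ad}_{\xi_i}(\Bar{\xi}_{j_l})=\delta_{ij_l}$ reproduces exactly the defining formula of the contraction $\Bar{\xi}_i^*\lrcorner(-)$ on $\Bar{\xi}_{j_1}\wedge\cdots\wedge\Bar{\xi}_{j_b}$. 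The prefactor $(-1)^a=(-1)^{|\xi_{i_1}\cdots\xi_{i_a}|}$ is precisely the Koszul sign carried by the odd operator $\Bar{\xi}_i^*$ when it is applied past the $\bigwedge F_\xi$-tensor factor, so $\operatorname{ad}_{\xi_i}$ agrees with $\Bar{\xi}_i^*$ on a $k$-basis and therefore everywhere. The computation for $\operatorname{ad}_{\Bar{\xi}_i}$ is identical, except now the $\Bar{\xi}$-block is annihilated and $\operatorname{ad}_{\Bar{\xi}_i}$ acts on the leftmost, $\xi$-block with no intervening Koszul sign, yielding $\operatorname{ad}_{\Bar{\xi}_i}=\xi_i^*$.

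The routine verifications — that $\operatorname{ad}_x$ is a graded derivation, and that the normal-ordered monomials form a $k$-basis of $C$ (implicit in Remark~\ref{remark_rhoisoalg}) — I would leave to the reader. The step needing care, and the only place signs can go wrong, is the final matching of the $(-1)^a$ produced by commuting $\operatorname{ad}_{\xi_i}$ through the $\xi$-block with the Koszul sign attached to $1\otimes\Bar{\xi}_i^*$ on $\bigwedge F_\xi\otimes\bigwedge F_{\Bar{\xi}}$, together with the observation that no such sign intervenes for $\operatorname{ad}_{\Bar{\xi}_i}$. Composing the resulting identities $[\xi_i,-]=\Bar{\xi}_i^*$ and $[\Bar{\xi}_i,-]=\xi_i^*$ with the intertwining relations of the first paragraph gives both commuting squares.
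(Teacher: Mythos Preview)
Your proposal is correct and is essentially the same as the paper's approach: the paper records only ``By direct calculation'', and what you have written is a clean way to carry out that calculation, organized via the algebra isomorphism of Remark~\ref{remark_rhoisoalg}. The sign-matching you single out as the delicate point is indeed the only place one could slip, and your treatment of it is accurate.
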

\begin{proof}
By direct calculation.
\end{proof}

\begin{lemma}\label{lemma:transfer_rho} The operator $(*)$ which makes the diagram
\[
\xymatrix@C+3pc@R+2pc{
\Hom_k( \widetilde{X}, \widetilde{X} ) \otimes R \ar[d]_-{d_{\AA}} \ar[r]_-{\cong}^-{ \rho \otimes 1} & \bigwedge F_\xi \otimes \bigwedge F_{\Bar{\xi}} \otimes R \ar[d]^-{(*)}\\
\Hom_k( \widetilde{X}, \widetilde{X} ) \otimes R \ar[r]^-{\cong}_-{ \rho \otimes 1 } & \bigwedge F_\xi \otimes \bigwedge F_{\Bar{\xi}} \otimes R
}
\]
is given by the formula
\[
\sum_{i=1}^r f_i \xi_i^* + \sum_{i=1}^r g_i \Bar{\xi}_i^*\,.
\]
\end{lemma}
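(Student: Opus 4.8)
The plan is to recognise $d_{\AA}$ as a graded commutator, split it along the Clifford generators, and transport each piece through $\rho$ by invoking Lemma \ref{lemma:commutators_on_rho}.

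Since $X = Y$, the differential on $\AA(X,X) = \End_k(\widetilde{X})\otimes R = \End_k(\bigwedge F_\xi)\otimes R$ is $d_{\AA}(\alpha) = d_X\circ\alpha - (-1)^{|\alpha|}\alpha\circ d_X = [d_X,\alpha]$, the graded commutator, since $d_X$ is odd. By \eqref{eq:defn_X_tilde} the operator $d_X$ on $\widetilde{X}\otimes R$ is $\sum_{i=1}^r f_i\,\xi_i^* + \sum_{i=1}^r g_i\,\xi_i$, where $\xi_i^* = \xi_i^*\lrcorner(-)$ and $\xi_i = \xi_i\wedge(-)$ are the usual odd operators on $\bigwedge F_\xi$ and $f_i,g_i\in R$ act by multiplication on the second tensor factor. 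Because the $f_i,g_i$ are even and central they come out of the commutator without any sign, so as operators on $\End_k(\bigwedge F_\xi)\otimes R$ one has
\[
d_{\AA} \;=\; \sum_{i=1}^r f_i\,[\xi_i^*, -] \;+\; \sum_{i=1}^r g_i\,[\xi_i, -]\,,
\]
where $[\xi_i^*,-]$ and $[\xi_i,-]$ act on the $\End_k(\bigwedge F_\xi)$-factor alone while $f_i,g_i$ act on the $R$-factor alone, so the two halves of each summand commute. (In the language of Remark \ref{remark_rhoisoalg}: under $\rho$ the operator $d_X$ corresponds to $\sum f_i\Bar{\xi}_i + \sum g_i\xi_i$ in the Clifford algebra $C$, and $d_{\AA}$ to the graded adjoint action of this element.)

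It now remains to transport $[\xi_i^*, -]$ and $[\xi_i, -]$ through $\rho$, since multiplication by $f_i,g_i$ is unaffected by $\rho\otimes 1$. But this is precisely what Lemma \ref{lemma:commutators_on_rho} computes: its second diagram gives that $[\xi_i^*, -]$ on $\End_k(\bigwedge F_\xi)$ corresponds to $\xi_i^*$ (contraction on the $\bigwedge F_\xi$ factor) on $\bigwedge F_\xi\otimes\bigwedge F_{\Bar{\xi}}$, and its first diagram gives that $[\xi_i, -]$ corresponds to $\Bar{\xi}_i^*$ (contraction on the $\bigwedge F_{\Bar{\xi}}$ factor), in the abbreviation of Definition \ref{defn:bar_convention}. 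Substituting into the displayed formula, the operator $(*)$, namely $d_{\AA}$ transported to $\bigwedge F_\xi\otimes\bigwedge F_{\Bar{\xi}}\otimes R$ along $\rho$, is $\sum_{i=1}^r f_i\,\xi_i^* + \sum_{i=1}^r g_i\,\Bar{\xi}_i^*$, which is the claimed formula.

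There is no real obstacle here; the only points needing care are the sign-free extraction of the central coefficients $f_i,g_i$ from the graded commutator, and the bookkeeping that the two contraction symbols $\xi_i^*$ and $\Bar{\xi}_i^*$ in the answer act on different exterior-algebra factors. Alternatively the identity can be checked directly by evaluating both sides on the $k$-basis of $\End_k(\bigwedge F_\xi)$ given by the $\rho(\xi_{i_1}\wedge\cdots\wedge\xi_{i_a}\otimes\Bar{\xi}_{j_1}\wedge\cdots\wedge\Bar{\xi}_{j_b})$, exactly as for Lemma \ref{lemma:transfer}, but routing through Lemma \ref{lemma:commutators_on_rho} sidesteps the wedge-and-contraction sign computations.
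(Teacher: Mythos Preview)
Your proof is correct and follows exactly the same route as the paper's own argument: write $d_{\AA}$ as $\sum_i f_i[\xi_i^*,-] + \sum_i g_i[\xi_i,-]$ and then invoke Lemma~\ref{lemma:commutators_on_rho} to transport each commutator through $\rho$. The paper's proof is a two-line version of what you wrote; your added commentary on sign-free extraction of the central scalars and the distinction between the two contraction operators is helpful but not strictly required.
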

\begin{proof}
By definition the differential is
\[
d_{\AA} = \sum_{i=1}^r f_i [ \xi_i^*, - ] + \sum_{i=1}^r g_i [ \xi_i, - ]
\]
so this is immediate from Lemma \ref{lemma:commutators_on_rho}.
\end{proof}

Using $\rho$ we may identify $\HH(X,X)$ as a $\nZ_2$-graded $k$-module with
\be\label{eq:presentationHHalt_rho}
\xymatrix@C+2pc{
\bigwedge \big( F_\theta \oplus F_\xi \oplus F_{\Bar{\xi}} \big) \otimes R/I \otimes k\llbracket \bold{t} \rrbracket
}\,.
\ee
With this notation, the operator $\vAt_{\AA}$ on $\HH(X,X)$ corresponds to
\begin{align*}
\vAt^\rho_{\AA} &= [ d_{\AA}, \nabla ] = \sum_{k=1}^n \theta_k [ \partial_{t_k}, d_{\AA} ]\\
&= \sum_{k=1}^n \sum_{i=1}^r \theta_k \partial_{t_k}(f_i) \xi_i^* + \sum_{k=1}^n \sum_{i=1}^r \theta_k \partial_{t_k}(g_i) \Bar{\xi}_i^*\,.
\end{align*}
With the same conventions about $\delta = \sum_{k=1}^n \lambda^\bullet_k \theta_k^*$ as above, we have
\be
\lambda^X_k = \sum_{i=1}^r F_{ki} \xi_i^* + \sum_{i=1}^r G_{ki} \xi_i\,.
\ee
The operator $\lambda_k^\bullet$ acts on $\End_k( \bigwedge F_\xi ) \otimes R$ by post-composition with $\lambda^X_k$, that is to say, by left multiplication in the endomorphism ring, and since $\rho$ is an isomorphism of algebras the corresponding operator on \eqref{eq:presentationHHalt_rho} using $\rho$ is
\be
\delta^\rho = \sum_{k=1}^n \lambda^\bullet_k \theta_k^* = \sum_{k=1}^n \sum_{i=1}^r F_{ki} [\Bar{\xi}_i \bullet (-)] \theta_k^* + \sum_{k=1}^n \sum_{i=1}^r G_{ki} [\xi_i \bullet (-)] \theta_k^*
\ee
where $\bullet$ means multiplication in the Clifford algebra structure on $\bigwedge F_\xi \otimes \bigwedge F_{\Bar{\xi}}$. It is easily checked that as operators on this tensor product, we have
\[
\xi_i \bullet (-) = \xi_i \otimes 1\,, \qquad \Bar{\xi}_i \bullet (-) = \xi_i^* \otimes 1 + 1 \otimes \Bar{\xi}_i
\]
with the usual convention that $\xi_i$ means $\xi_i \wedge (-)$ and $\xi_i^*$ means $\xi_i^* \lrcorner (-)$. So finally
\be
\delta = \sum_{k=1}^n \sum_{i=1}^r F_{ki} \xi_i^* \theta_k^* + \sum_{k=1}^n \sum_{i=1}^r F_{ki} \Bar{\xi}_i \theta_k^* + \sum_{k=1}^n \sum_{i=1}^r G_{ki} \xi_i \theta_k^*\,.
\ee

\begin{lemma}
The critical Atiyah class $\vAt_{\AA}$ may be presented using $\rho$ as an operator on \eqref{eq:presentationHHalt_rho}, given by the sum of the four terms below, each of which is itself summed over the indices $1 \le h,l \le \mu, 1 \le k \le n$ and $\delta \in \mathbb{N}^n$:
\begin{align}
&\sum_{i=1}^r \Big[ \sum_{\alpha + \beta = \delta } \sum_{m=1}^\mu (f_i)_{(m,\alpha)} \Gamma^{mh}_{l\beta} \Big] \theta_k z_l \partial_{t_k}(t^\delta) \xi_i^* z_h^*  \qquad (\textup{A}.1)^\rho \label{eq:a3vertex_rho}\\
&\sum_{i=1}^r \Big[ \sum_{\alpha + \beta = \delta } \sum_{m=1}^\mu (g_i)_{(m,\alpha)} \Gamma^{mh}_{l\beta} \Big] \theta_k z_l \partial_{t_k}(t^\delta) z_h^* \Bar{\xi}_i^* \qquad (\textup{A}.4)^\rho \label{eq:a4vertex_rho}
\end{align}
\end{lemma}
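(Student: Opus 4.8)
The plan is to obtain the stated formula as a direct consequence of two facts already established in this section: the description of the differential $d_{\AA}$ under the isomorphism $\rho$ given in Lemma~\ref{lemma:transfer_rho}, and the explicit expansion~\eqref{eq:partial_derivative_op} of the operators $\partial_{t_k}(r)$ on $R/I \otimes k\llbracket \bold{t}\rrbracket$ in terms of the multiplication tensor $\Gamma$ of Definition~\ref{defn_gamma} and the coefficients $r_{(m,\alpha)}$ of Definition~\ref{defn:rsharp}. Thus the proof will consist of invoking Definition~\ref{defn:atiyah_class} to write $\vAt_{\AA} = [d_{\AA},\nabla]$ with $\nabla = \sum_{k=1}^n \theta_k \partial_{t_k}$, transferring everything across $\rho$ so that (by Lemma~\ref{lemma:transfer_rho}) $d_{\AA}$ becomes the operator $\sum_{i=1}^r f_i \xi_i^* + \sum_{i=1}^r g_i \Bar{\xi}_i^*$ on~\eqref{eq:presentationHHalt_rho}, with $f_i, g_i$ read as the multiplication operators $f_i^{\#}, g_i^{\#}$, and then expanding.

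First I would compute the graded commutator $[d_{\AA},\nabla]$. Both $d_{\AA}$ and $\nabla$ are odd, and $d_{\AA}$ anticommutes with each $\theta_k$ (the differential on $\AA_\theta$ being inherited from $\AA$ and hence not involving the $\theta$'s), so $[d_{\AA},\nabla] = \sum_{k=1}^n \theta_k[\partial_{t_k}, d_{\AA}]$. Since the even operator $\partial_{t_k}$ acts only on the $k\llbracket \bold{t}\rrbracket$-factor while $\xi_i^*, \Bar{\xi}_i^*$ act only on the exterior factors, the commutator differentiates only the scalar coefficients, yielding
\[
\vAt^\rho_{\AA} = \sum_{k=1}^n\sum_{i=1}^r \theta_k\, \partial_{t_k}(f_i)\, \xi_i^* \;+\; \sum_{k=1}^n\sum_{i=1}^r \theta_k\, \partial_{t_k}(g_i)\, \Bar{\xi}_i^*\,,
\]
where $\partial_{t_k}(r) := [\partial_{t_k}, r^{\#}]$. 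Substituting~\eqref{eq:partial_derivative_op} for $\partial_{t_k}(f_i)$ and $\partial_{t_k}(g_i)$, and then moving the (even) operators $z_l, z_h^*$ and $\partial_{t_k}(t^\delta)$ past the odd $\xi_i^*, \Bar{\xi}_i^*$ — which act on a disjoint tensor factor, so no signs appear — to normal-ordered form $\theta_k\, z_l\, \partial_{t_k}(t^\delta)\,(\cdot)\, z_h^*$, produces exactly the monomials $(\textup{A}.1)^\rho$ of~\eqref{eq:a3vertex_rho} and $(\textup{A}.4)^\rho$ of~\eqref{eq:a4vertex_rho}. Summing over $1 \le h,l \le \mu$, $1 \le k \le n$ and $\delta \in \mathbb{N}^n$ then gives the stated schematic formula. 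I would also remark that, unlike the $\nu$-presentation of the previous lemma, only these two vertex types occur here because the $\rho$-presentation of $d_{\AA}$ involves no creation operators $\xi_i$ or $\Bar{\xi}_i$, so the analogues of the $(\textup{A}.2)$- and $(\textup{A}.3)$-type terms are absent.

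I do not expect a serious obstacle: the computation has exactly the same shape as the combination of~\eqref{eq:vat_formula} with~\eqref{eq:partial_derivative_op} already carried out for the $\nu$-case, and is essentially Lemma~\ref{lemma:transfer_rho} followed by bookkeeping. The only point that needs care is verifying that under $\rho$ the differential really is a sum of graded commutators $[\xi_i^*, -]$, $[\xi_i, -]$ in $\End_k(\bigwedge F_\xi)$ (so that $[\partial_{t_k}, d_{\AA}]$ differentiates only the scalar coefficients while the Clifford/exterior part is carried along unchanged) — this is Lemma~\ref{lemma:transfer_rho} together with Lemma~\ref{lemma:commutators_on_rho} and Remark~\ref{remark_rhoisoalg} — and then the Koszul-sign accounting when bringing each summand into the normal-ordered form displayed in the vertex tables, where the relevant signs all turn out to be trivial because the factors being commuted act on mutually disjoint tensor factors.
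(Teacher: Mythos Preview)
Your proposal is correct and follows essentially the same approach as the paper: the paper computes $\vAt^\rho_{\AA} = \sum_{k,i}\theta_k\partial_{t_k}(f_i)\xi_i^* + \sum_{k,i}\theta_k\partial_{t_k}(g_i)\Bar{\xi}_i^*$ in the text immediately preceding the lemma, using Lemma~\ref{lemma:transfer_rho} exactly as you do, and the lemma itself is then just the expansion via~\eqref{eq:partial_derivative_op}. Your remark on why only two vertex types survive (compared to four in the $\nu$-presentation) is also the point the paper is implicitly making by labelling them $(\textup{A}.1)^\rho$ and $(\textup{A}.4)^\rho$.
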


\begin{lemma}
The operator $\delta$ may be presented using $\rho$ as an operator on \eqref{eq:presentationHHalt_rho} given by the sum of the two terms given below, each of which is itself summed over the indices $1 \le h,l \le \mu, 1 \le k \le n$ and $\delta \in \mathbb{N}^n$:
\begin{align}
& \sum_{i=1}^r \Big[ \sum_{\alpha + \beta = \delta } \sum_{m=1}^\mu (F_{ki})_{(m,\alpha)} \Gamma^{m h}_{l \beta} \Big] z_{l} t^\delta \xi_i^* z_h^* \theta_k^* \qquad (\textup{C}.1)^\rho\label{eq:c1vertex_rho}\\
& \sum_{i=1}^r \Big[ \sum_{\alpha + \beta = \delta } \sum_{m=1}^\mu (G_{ki})_{(m,\alpha)} \Gamma^{m h}_{l \beta} \Big] z_{l} t^\delta \xi_i z_h^* \theta_k^* \qquad (\textup{C}.2)^\rho\label{eq:c2vertex_rho}\\
& \sum_{i=1}^r \Big[ \sum_{\alpha + \beta = \delta } \sum_{m=1}^\mu (F_{ki})_{(m,\alpha)} \Gamma^{m h}_{l \beta} \Big] z_{l} t^\delta \Bar{\xi}_i z_h^* \theta_k^* \qquad (\textup{C}.3)^\rho\label{eq:c3vertex_rho}
\end{align}
\end{lemma}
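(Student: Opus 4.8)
The plan is to obtain the stated presentation by substituting directly into the formula for $\delta$ already derived above, exactly in parallel with the treatment of $\vAt_{\AA}$ under $\rho$. Recall that, working on the $k$-module \eqref{eq:presentationHHalt_rho} and using the algebra isomorphism $\rho$ together with the identifications $\xi_i \bullet (-) = \xi_i \otimes 1$ and $\Bar{\xi}_i \bullet (-) = \xi_i^* \otimes 1 + 1 \otimes \Bar{\xi}_i$, we have already shown that
\[
\delta = \sum_{k=1}^n \sum_{i=1}^r F_{ki}\,\xi_i^* \theta_k^* + \sum_{k=1}^n \sum_{i=1}^r F_{ki}\,\Bar{\xi}_i \theta_k^* + \sum_{k=1}^n \sum_{i=1}^r G_{ki}\,\xi_i \theta_k^*\,,
\]
where $F_{ki}$ and $G_{ki}$ are to be read as the operators $(F_{ki})^{\#}$ and $(G_{ki})^{\#}$ of Definition \ref{defn:rsharp}, obtained by transporting multiplication by the respective polynomials through the isomorphism $\sigmastar$ to operators on $R/I \otimes k\llbracket \bold{t}\rrbracket$.

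First I would invoke Lemma \ref{lemma:rsharp_explicit} to expand each of $(F_{ki})^{\#}$ and $(G_{ki})^{\#}$ in terms of the multiplication tensor $\Gamma$ of Definition \ref{defn_gamma} and the structure coefficients $(F_{ki})_{(m,\alpha)}$, $(G_{ki})_{(m,\alpha)}$ of Definition \ref{defn:rsharp}; concretely, as an operator on $R/I \otimes k\llbracket \bold{t}\rrbracket$,
\[
(F_{ki})^{\#} = \sum_{h,l=1}^\mu \sum_{\delta \in \mathbb{N}^n} \Big[ \sum_{\alpha + \beta = \delta} \sum_{m=1}^\mu (F_{ki})_{(m,\alpha)}\, \Gamma^{mh}_{l\beta} \Big]\, z_l\, t^\delta\, z_h^*\,,
\]
and similarly for $(G_{ki})^{\#}$. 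Substituting these into the three summands of the displayed formula for $\delta$ yields three families of monomials in creation and annihilation operators, indexed over $1 \le h,l \le \mu$, $1 \le k \le n$ and $\delta \in \mathbb{N}^n$, coming respectively from the factors $\xi_i^*$, $\Bar{\xi}_i$, $\xi_i$.

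The only point requiring care is the ordering of factors: the operators $z_l, t^\delta, z_h^*$ act on the tensor factor $R/I \otimes k\llbracket \bold{t}\rrbracket$, whereas $\xi_i^*, \Bar{\xi}_i, \xi_i, \theta_k^*$ act on $\bigwedge(F_\theta \oplus F_\xi \oplus F_{\Bar{\xi}})$; since $z_l, t^\delta, z_h^*$ are of even $\nZ_2$-degree they commute with all of these fermionic operators with no Koszul sign, so in each term one may move $z_h^*$ to the right past $\xi_i^*$ (respectively $\Bar{\xi}_i$, $\xi_i$) freely. After this rearrangement the three families are exactly the vertices $(\textup{C}.1)^\rho$, $(\textup{C}.3)^\rho$ and $(\textup{C}.2)^\rho$ of the statement, which completes the argument. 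I expect no real obstacle here: unlike $\vAt_{\AA}$, the operator $\delta$ contains no $\partial_{t_k}$ to act on the $t^\delta$ generated by the transfer, so the whole computation is a sign-free bookkeeping exercise once Lemma \ref{lemma:rsharp_explicit} is available.
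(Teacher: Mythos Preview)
Your proposal is correct and follows essentially the same approach as the paper: the lemma is stated immediately after the formula $\delta = \sum_{k,i} F_{ki}\,\xi_i^* \theta_k^* + \sum_{k,i} F_{ki}\,\Bar{\xi}_i \theta_k^* + \sum_{k,i} G_{ki}\,\xi_i \theta_k^*$ has been derived, and the expansion is obtained exactly as you describe, by applying Lemma~\ref{lemma:rsharp_explicit} to each $(F_{ki})^{\#}$ and $(G_{ki})^{\#}$ and reordering the commuting even factors. The paper offers no separate proof beyond this implicit combination.
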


The B-type interaction is as before. The other interactions are

\begin{center}
\begin{tabular}{ >{\centering}m{8cm} >{\centering}m{6cm} }
\[
\xymatrix@C+2pc@R+3pc{
& & \ar@{.}[d]^-{z_h} & \ar[dl]^-{\xi_i}\\
& & \bullet \ar@{=}[dl]_-{\theta_k} \ar@{.}[d]^-{z_l} \ar@{~}[dr]^-{\partial_{t_k}(t^\delta)}\\
& & & & 
}
\]
&
\textbf{(A.1)${}^\rho$}
\vspace{1cm}
\[\sum_{\alpha + \beta = \delta } \sum_{m=1}^\mu (f_i)_{(m,\alpha)} \Gamma^{mh}_{l\beta}\]
\vspace{0.5cm}
$\theta_k z_l \partial_{t_k}(t^\delta) \xi_i^* z_h^*$
\end{tabular}
\end{center}

\begin{center}
\begin{tabular}{ >{\centering}m{8cm} >{\centering}m{6cm} }
\[
\xymatrix@C+2pc@R+3pc{
& \ar@{.}[d]^-{z_h} &\\
& \bullet \ar[ur]_-{\xi_i} \ar@{=}[dl]^-{\theta_k} \ar@{.}[d]^-{z_l} \ar@{~}[dr]^-{\partial_{t_k}(t^\delta)}\\
& &
}
\]
&
\textbf{(A.4)${}^\rho$}
\vspace{1cm}
\[ \sum_{\alpha + \beta = \delta } \sum_{m=1}^\mu (g_i)_{(m,\alpha)} \Gamma^{mh}_{l\beta} \]
\vspace{0.5cm}
$\theta_k z_l \partial_{t_k}(t^\delta) z_h^* \Bar{\xi}_i^*$
\end{tabular}
\end{center}

\begin{center}
\begin{tabular}{ >{\centering}m{8cm} >{\centering}m{6cm} }
\[
\xymatrix@C+2pc@R+3pc{
\ar[dr]_-{\xi_i} & \ar@{.}[d]^-{z_h} & \ar@{=}[dl]^-{\theta_k}\\
& \bullet \ar@{.}[d]^-{z_l} \ar@{~}[dl]^-{t^\delta}\\
& &
}
\]
&
\textbf{(C.1)${}^\rho$}
\vspace{1cm}
\[\sum_{\alpha + \beta = \delta } \sum_{m=1}^\mu (F_{ki})_{(m,\alpha)} \Gamma^{m h}_{l \beta}\]
\vspace{0.5cm}
$z_{l} t^\delta \xi_i^* z_h^* \theta_k^*$
\end{tabular}
\end{center}

\begin{center}
\begin{tabular}{ >{\centering}m{8cm} >{\centering}m{6cm} }
\[
\xymatrix@C+2pc@R+3pc{
& \ar@{.}[d]^-{z_h} & \ar@{=}[dl]^-{\theta_k} \\
& \bullet \ar[dl]^-{\xi_i}\ar@{.}[d]^-{z_l} \ar@{~}[dr]^-{t^\delta}\\
& &
}
\]
&
\textbf{(C.2)${}^\rho$}
\vspace{1cm}
\[\sum_{\alpha + \beta = \delta } \sum_{m=1}^\mu (G_{ki})_{(m,\alpha)} \Gamma^{m h}_{l \beta}\]
\vspace{0.5cm}
$z_{l} t^\delta \xi_i z_h^* \theta_k^*$
\end{tabular}
\end{center}

\begin{center}
\begin{tabular}{ >{\centering}m{8cm} >{\centering}m{6cm} }
\[
\xymatrix@C+2pc@R+3pc{
& \ar@{.}[d]^-{z_h} & \ar@{=}[dl]^-{\theta_k} \\
& \bullet \ar@{.}[d]^-{z_l} \ar@{~}[dr]^-{t^\delta}\\
\ar[ur]^-{\xi_i} & &
}
\]
&
\textbf{(C.3)${}^\rho$}
\vspace{1cm}
\[\sum_{\alpha + \beta = \delta } \sum_{m=1}^\mu (F_{ki})_{(m,\alpha)} \Gamma^{m h}_{l \beta}\]
\vspace{0.5cm}
$z_{l} t^\delta \Bar{\xi}_i z_h^* \theta_k^*$
\end{tabular}
\end{center}

\subsection{Diagrams for $\zeta$}\label{section:fenyman_diagram_2}

A power of $\zeta \vAt_{\AA}$ will contribute a sequence of A-type interactions together with the scalar factors analysed in Section \ref{section:propagator}. To put these factors in the context of Feynman diagrams, consider a power $(\zeta \vAt)^m$ contributing $m$ A-type vertices, each of which emits a $\theta$, some monomial $t^\delta$, and acts in some way on the rest of $\HH$ which we ignore. Such a process is depicted generically in Figure \ref{fig:tapesofU} and the factor contributed by this diagram is
\be\label{eq:diagram_for_zeta_eq}
Z^{\,\rightarrow}(a,|\delta_1|,\ldots,|\delta_m|) = \frac{1}{a + |\delta_1|} \frac{1}{a + |\delta_1| + |\delta_3|} \cdots \frac{1}{a + |\delta_1| + \cdots + |\delta_m|}\,.
\ee
\begin{figure}
\begin{tikzpicture}[scale=0.8,auto]
\draw[line width=2pt] (-8,0) -- (5,0);
\draw[snake=coil,segment aspect=0, segment amplitude=2pt, segment length=7pt] (-6.7,0) -- (-5.7,4);
\draw[double] (-7,0) -- (-6,4);
\node (1t) at (-5.5,4.5) {$\theta_1 \partial_{t_{i_1}}(t^{\delta_1})$};
\draw[snake=coil,segment aspect=0, segment amplitude=2pt, segment length=7pt] (-3.7,0) -- (-2.7,4);
\draw[double] (-4,0) -- (-3,4);
\node (2t) at (-2.5,4.5) {$\theta_2 \partial_{t_{i_2}}(t^{\delta_2})$};
\node (dots) at (0,3) {$\cdots$};
\draw[snake=coil,segment aspect=0, segment amplitude=2pt, segment length=7pt] (2.3,0) -- (3.3,4);
\draw[double] (2,0) -- (3,4);
\node (nt) at (3.7,4.5) {$\theta_m \partial_{t_{i_m}}(t^{\delta_m})$};
\node (1) at (-7.6,-0.5) {$a$};
\node (2) at (-5.5,-0.5) {$a + |\delta_1|$};
\node (m) at (4,-0.5) {$a + \sum_{i=1}^m |\delta_i|$};
\end{tikzpicture}
\centering
\caption{Depiction of a process contributing $\zeta$ factors. The labels indicate the virtual weight of the tensor obtained by cutting the diagram vertically at that position. Moving left to right in this diagram is to be read as moving down the tree.}\label{fig:tapesofU}
\end{figure}
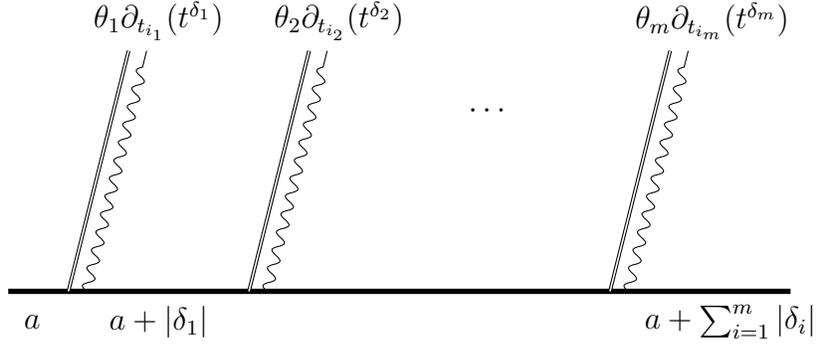
There is also an occurrence of $\zeta$ as $\zeta \nabla$ in $\phi_\infty$, which contributes a scalar factor immediately after every B-type vertex. To be precise, $(\zeta \vAt)^m \zeta$ will contribute
\be\label{eq:diagram_for_zeta_eq_b}
\frac{1}{a} \frac{1}{a + |\delta_1|} \frac{1}{a + |\delta_1| + |\delta_3|} \cdots \frac{1}{a + |\delta_1| + \cdots + |\delta_m|}\,.
\ee

\subsection{Diagrams for $\mu_2$}\label{section:feynman_diagram_3}

Finally, we require a diagrammatic representation for the composition
\[
\mu_2: \HH(Y, Z) \otimes \HH(X, Y) \lto \HH(X,Z)\,.
\]
In addition to the spaces $\widetilde{X} = \bigwedge F_\xi$ and $\widetilde{Y} = \bigwedge F_\eta$ underlying the matrix factorisations $X,Y$ we now introduce odd generators $\varepsilon_1,\ldots,\varepsilon_t$, $F_\varepsilon = \bigoplus_{i=1}^t k \varepsilon_i$ and set
\[
\widetilde{Z} = \bigwedge F_\varepsilon = \bigwedge\big( k \varepsilon_1 \oplus \cdots \oplus k \varepsilon_t \big)
\]
underlying a Koszul matrix factorisation $Z = \widetilde{X} \otimes R$. 

\begin{lemma}\label{lemma_mu2presentation} For $\omega, \omega' \in \bigwedge F_\theta$ and $\alpha \in \Hom_k(\widetilde{Y}, \widetilde{Z}), \beta \in \Hom_k(\widetilde{X}, \widetilde{Y})$
\begin{gather*}
\mu_2\Big( [ \omega \otimes z_h \otimes \alpha ] \otimes [ \omega' \otimes z_l \otimes \beta ] \Big) \\
= (-1)^{|\alpha||\omega'|} \sum_{k=1}^\mu \sum_{\delta} \Gamma^{hl}_{k \delta} \cdot \omega \wedge \omega' \otimes z_k \otimes \alpha \circ \beta \otimes t^{\delta}\,.
\end{gather*}
\end{lemma}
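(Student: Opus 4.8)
The plan is to unwind the transfer isomorphisms $\sigmastar$ that define the DG-category structure on $\AA'_\theta$ and reduce the statement to the definition of the multiplication tensor $\Gamma$, tracking the single Koszul sign that survives. Since the composition on $\AA'_\theta$ is $k\llbracket\bold{t}\rrbracket$-bilinear it suffices to prove the formula for elements with trivial power-series component, namely $\omega\otimes z_h\otimes\alpha$ and $\omega'\otimes z_l\otimes\beta$.

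First I would identify, using \eqref{eq:chosenCiso} and the $k\llbracket\bold{t}\rrbracket$-linear isomorphism $\sigmastar\colon R/I\otimes k\llbracket\bold{t}\rrbracket\xrightarrow{\;\cong\;}\widehat{R}$ of Lemma \ref{prop_algtube}, the image of $\omega\otimes z_h\otimes\alpha$ in $\AA_\theta(Y,Z)=\bigwedge F_\theta\otimes\big(\Hom_k(\widetilde{Y},\widetilde{Z})\otimes\widehat{R}\big)$ as $\omega\otimes\big(\alpha\otimes\sigmastar(z_h\otimes 1)\big)=\omega\otimes\big(\alpha\otimes\sigma(z_h)\big)$, and likewise $\omega'\otimes z_l\otimes\beta\mapsto\omega'\otimes\big(\beta\otimes\sigma(z_l)\big)$.

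Next I would apply the genuine composition of $\AA_\theta$, which is the composition of $\widehat{R}$-linear maps on the $\Hom$-part together with the wedge product on $\bigwedge F_\theta$; the only Koszul sign arises from moving $\omega'$ (of $\nZ_2$-degree $|\omega'|$) past the morphism $\alpha\otimes\sigma(z_h)$, and since $\sigma(z_h)\in\widehat{R}$ is even this morphism has $\nZ_2$-degree $|\alpha|$, giving the factor $(-1)^{|\alpha||\omega'|}$. Because composition of $\widehat{R}$-linear maps is $\widehat{R}$-bilinear, and $\alpha,\beta$ compose without further sign, the result is $(-1)^{|\alpha||\omega'|}\,(\omega\wedge\omega')\otimes\big((\alpha\circ\beta)\otimes\sigma(z_h)\sigma(z_l)\big)$. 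I would then expand $\sigma(z_h)\sigma(z_l)=\sum_{k=1}^\mu\sum_{\delta}\Gamma^{hl}_{k\delta}\,\sigma(z_k)t^\delta$ by Definition \ref{defn_gamma} (with the upper indices in this order), and finally transfer back with $(\sigmastar)^{-1}$, which sends $\sigma(z_k)t^\delta$ to $z_k\otimes t^\delta$. Collecting terms yields exactly the claimed formula.

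There is no real conceptual difficulty here: the content is entirely bookkeeping. The two points requiring care are (i) the Koszul signs, which collapse to the single factor $(-1)^{|\alpha||\omega'|}$ precisely because every ring scalar that appears ($R$-, $\widehat{R}$- and $k\llbracket\bold{t}\rrbracket$-elements, in particular $\sigma(z_h)$ and $1$) is concentrated in even degree, and (ii) invoking Definition \ref{defn_gamma} with $\sigma(z_h)$ as the first factor, so that the upper indices of $\Gamma$ appear as $hl$ and not $lh$.
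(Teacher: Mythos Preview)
Your proposal is correct and is precisely the direct unwinding of the definitions that the paper has in mind; the paper does not give a proof of this lemma (it is one of the ``straightforward direct calculations'' that are omitted). The two care points you flag---that the sole Koszul sign comes from commuting $\omega'$ past $\alpha$ because all ring scalars are even, and that Definition~\ref{defn_gamma} must be applied with $\sigma(z_h)$ first to get the index order $\Gamma^{hl}$---are exactly right.
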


First we explain how to represent $\alpha \circ \beta$ diagrammatically. The following lemmas are proven by straightforward direct calculations, which we omit.

\begin{lemma}\label{lemma:mixedr2_0} The diagram
\be
\xymatrix@C+2pc@R+2pc{
\Hom_k( \widetilde{Y}, \widetilde{Z} ) \otimes \Hom_k( \widetilde{X}, \widetilde{Y} ) \ar[d]_-{\nu \otimes \nu} \ar[r]^-{- \circ -} & \Hom_k( \widetilde{X}, \widetilde{Z} ) \ar[d]^-\nu\\
\big( \bigwedge F_\varepsilon \otimes \bigwedge F_{\Bar{\eta}} \big) \otimes \big( \bigwedge F_\eta \otimes \bigwedge F_{\Bar{\xi}} \big) \ar[d]_-{\exp( \sum_i \eta_i^* \Bar{\eta}_i^* )} & \bigwedge F_\varepsilon \otimes \bigwedge F_{\Bar{\xi}} \ar@{=}[d]\\
\big( \bigwedge F_\varepsilon \otimes \bigwedge F_{\Bar{\eta}} \big) \otimes \big( \bigwedge F_\eta \otimes \bigwedge F_{\Bar{\xi}} \big) \ar[r]_-P & \bigwedge F_\varepsilon \otimes \bigwedge F_{\Bar{\xi}}
}
\ee
commutes, where $P$ denotes the projection onto the subspace with no $\Bar{\eta}$'s or $\eta$'s.
\end{lemma}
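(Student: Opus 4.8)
The plan is to unwind both legs of the square into explicit sums over monomial bases of the exterior algebras and compare them. Since both legs are $k$-bilinear in the pair $(\alpha,\beta)$ it suffices to check the identity on elementary tensors, and the entire content is a sign calculation. For increasing multi-indices $I\subseteq\{1,\dots,r\}$, $J,J'\subseteq\{1,\dots,s\}$, $K\subseteq\{1,\dots,t\}$ write $\xi_I=\xi_{i_1}\wedge\cdots\wedge\xi_{i_{|I|}}$ and likewise $\Bar{\xi}_I,\eta_J,\Bar{\eta}_J,\varepsilon_K$, and expand $\beta(\xi_I)=\sum_J b^I_J\,\eta_J$, $\alpha(\eta_{J'})=\sum_K a^{J'}_K\,\varepsilon_K$. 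By the formula for $\nu$ in Lemma \ref{lemma:iso_nu} one has $\nu(\beta)=\sum_I(-1)^{\binom{|I|}{2}}\beta(\xi_I)\otimes\Bar{\xi}_I$ and $\nu(\alpha)=\sum_{J'}(-1)^{\binom{|J'|}{2}}\alpha(\eta_{J'})\otimes\Bar{\eta}_{J'}$, so that
\[
(\nu\otimes\nu)(\alpha\otimes\beta)=\sum_{I,J,J'}(-1)^{\binom{|I|}{2}+\binom{|J'|}{2}}\,b^I_J\;\alpha(\eta_{J'})\otimes\Bar{\eta}_{J'}\otimes\eta_J\otimes\Bar{\xi}_I
\]
inside $\big(\bigwedge F_\varepsilon\otimes\bigwedge F_{\Bar{\eta}}\big)\otimes\big(\bigwedge F_\eta\otimes\bigwedge F_{\Bar{\xi}}\big)$.

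The key step is to recognise $P\circ\exp\big(\sum_i\eta_i^*\Bar{\eta}_i^*\big)$ as the evaluation pairing $\bigwedge F_{\Bar{\eta}}\otimes\bigwedge F_\eta\to k$ applied in the two inner tensor slots. Precisely, I would establish that for homogeneous $w\in\bigwedge F_\varepsilon$, $w'\in\bigwedge F_{\Bar{\xi}}$ and increasing multi-indices $J,J'$,
\[
P\exp\Big(\sum_i\eta_i^*\Bar{\eta}_i^*\Big)\big(w\otimes\Bar{\eta}_{J'}\otimes\eta_J\otimes w'\big)=\delta_{J,J'}\,(-1)^{\binom{|J|}{2}}\,w\otimes w'\,.
\]
Indeed $P$ annihilates every term with a leftover $\eta$ or $\Bar{\eta}$, so only the degree-$m$ term $\tfrac{1}{m!}\big(\sum_i\eta_i^*\Bar{\eta}_i^*\big)^m$ with $m=|J|=|J'|$ survives, and expanding it gives a nonzero result only for sequences $(i_1,\dots,i_m)$ exhausting the indices of $J$ and of $J'$, forcing $J=J'$ and contributing $m!$ terms, one per bijection. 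A short computation shows that one application of $\eta_i^*\Bar{\eta}_i^*$ to a monomial whose $\Bar{\eta}$-factor currently has degree $p$ produces the sign $(-1)^{p-1}$: the two contraction signs $(-1)^{q-1}$ (from deleting the index at position $q$ from the $\Bar{\eta}$-product and from the $\eta$-product) cancel, while the Koszul signs incurred when $\Bar{\eta}_i^*$ and $\eta_i^*$ pass the $\bigwedge F_\varepsilon$ factor (and the partially contracted $\bigwedge F_{\Bar{\eta}}$ factor) combine to $(-1)^{p-1}$; this is independent of which bijection is used. Hence each of the $m!$ terms equals $\prod_{p=1}^{m}(-1)^{p-1}\,w\otimes w'=(-1)^{\binom{m}{2}}w\otimes w'$, and the factor $\tfrac{1}{m!}$ yields the stated identity. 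Equivalently, one may reduce to the case where $\alpha,\beta$ are rank-one, where the identity collapses to the elementary fact $\eta_{j_m}^*\cdots\eta_{j_1}^*\lrcorner(\eta_{j_1}\wedge\cdots\wedge\eta_{j_m})=1$ for $j_1<\cdots<j_m$, and the $\binom{m}{2}$ in $\nu$ is exactly what converts the increasing product $\eta_{j_1}^*\cdots\eta_{j_m}^*$ occurring in $\nu(\alpha)$ into the reversed product needed there.

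Finally, substituting the auxiliary identity term-by-term into the explicit expression above collapses the sum over $J'$ to the diagonal $J'=J$:
\[
P\exp\Big(\sum_i\eta_i^*\Bar{\eta}_i^*\Big)(\nu\otimes\nu)(\alpha\otimes\beta)=\sum_{I,J}(-1)^{\binom{|I|}{2}+2\binom{|J|}{2}}\,b^I_J\;\alpha(\eta_J)\otimes\Bar{\xi}_I=\sum_I(-1)^{\binom{|I|}{2}}\Big(\sum_J b^I_J\,\alpha(\eta_J)\Big)\otimes\Bar{\xi}_I\,.
\]
Since $\sum_J b^I_J\,\alpha(\eta_J)=\alpha\big(\sum_J b^I_J\,\eta_J\big)=\alpha(\beta(\xi_I))=(\alpha\circ\beta)(\xi_I)$, the right-hand side is precisely $\nu(\alpha\circ\beta)$, which is the other leg of the square; hence the diagram commutes. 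The main obstacle throughout is the sign bookkeeping — confirming that the $\binom{|J|}{2}$ twist built into $\nu$ is exactly cancelled by the sign generated when $P\exp\big(\sum_i\eta_i^*\Bar{\eta}_i^*\big)$ contracts the $\eta$'s against the $\Bar{\eta}$'s — and the cleanest way to keep this under control is the per-application induction above, since it isolates the relevant sign as $(-1)^{p-1}$ at each contraction independently of all choices.
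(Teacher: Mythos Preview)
Your argument is correct and is precisely the kind of ``straightforward direct calculation'' the paper alludes to but omits. The paper gives no proof for this lemma (stating only that it follows by direct calculation), so there is nothing further to compare; your explicit sign bookkeeping --- in particular the identification of $P\circ\exp(\sum_i\eta_i^*\Bar{\eta}_i^*)$ with the evaluation pairing, yielding the sign $(-1)^{\binom{|J|}{2}}$ that cancels the twist built into $\nu$ --- is exactly what is needed.
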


\begin{lemma}\label{lemma:mixedr2_1} The diagram
\be
\xymatrix@C+2pc@R+2pc{
\Hom_k( \widetilde{X}, \widetilde{Y} ) \otimes \Hom_k( \widetilde{X}, \widetilde{X} ) \ar[d]_-{\nu \otimes \rho^{-1}} \ar[r]^-{- \circ -} & \Hom_k( \widetilde{X}, \widetilde{Y} ) \ar[d]^-\nu\\
\big( \bigwedge F_\eta \otimes \bigwedge F_{\Bar{\xi}} \big) \otimes \big( \bigwedge F_\xi \otimes \bigwedge F_{\Bar{\xi}} \big) \ar[d]_-{\exp( \sum_i \xi_i^* \Bar{\xi}_i^* )} & \bigwedge F_\eta \otimes \bigwedge F_{\Bar{\xi}} \\
\big( \bigwedge F_\eta \otimes \bigwedge F_{\Bar{\xi}} \big) \otimes \big( \bigwedge F_\xi \otimes \bigwedge F_{\Bar{\xi}} \big) \ar[r]_-P & \bigwedge F_\eta \otimes \bigwedge F_{\Bar{\xi}} \otimes \bigwedge F_{\Bar{\xi}} \ar[u]_-{1 \otimes m}
}
\ee
commutes, where $P$ denotes the projection onto the subspace with no $\xi$'s and $m$ is multiplication in the exterior algebra.
\end{lemma}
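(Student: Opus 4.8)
The proof is a direct computation parallel to that of Lemma \ref{lemma:mixedr2_0}, and I would organise it as follows. Both composites in the diagram are $k$-bilinear, so it suffices to verify commutativity on pure tensors $\beta \otimes \alpha$ with $\beta$ ranging over a $k$-basis of $\Hom_k(\widetilde{X},\widetilde{Y})$ and $\alpha$ over a $k$-basis of $\End_k(\widetilde{X}) = \Hom_k(\widetilde{X},\widetilde{X})$. By Remark \ref{remark_rhoisoalg} the natural basis for the latter is the set of Clifford monomials: the images under $\rho$ of the basis elements $\xi_I \otimes \Bar{\xi}_J$ of $\bigwedge F_\xi \otimes \bigwedge F_{\Bar{\xi}}$ indexed by increasing tuples $I = (i_1 < \cdots < i_a)$ and $J = (j_1 < \cdots < j_b)$ in $\{1,\ldots,r\}$, so that $\rho^{-1}(\alpha) = \xi_I \otimes \Bar{\xi}_J$ corresponds to the operator $\alpha = \xi_{i_1} \circ \cdots \circ \xi_{i_a} \circ \xi_{j_1}^* \circ \cdots \circ \xi_{j_b}^*$ on $\bigwedge F_\xi$. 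For $\beta$ one may take the basis of $\Hom_k(\widetilde{X},\widetilde{Y})$ dual to the monomial bases of $\widetilde{X}$ and $\widetilde{Y}$.

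Going right then down, one computes $\beta \circ \alpha$ as an honest $k$-linear map $\widetilde{X} \to \widetilde{Y}$ and applies $\nu$. The key ingredient is an ``adjunction'' identity, proved by the same style of direct calculation as Lemma \ref{lemma:commutators_on_rho}: under $\nu$, precomposition of $\phi \in \Hom_k(\widetilde{X},\widetilde{Y})$ with $\xi_i \wedge (-)$ corresponds to the contraction $\Bar{\xi}_i^* \lrcorner (-)$ acting on the $\bigwedge F_{\Bar{\xi}}$ factor of $\nu(\phi)$, and precomposition with $\xi_i^* \lrcorner (-)$ corresponds to the wedge $\Bar{\xi}_i \wedge (-)$, each up to an explicit sign dictated by the Koszul rule. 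Iterating these over the generators making up the Clifford monomial $\alpha$ expresses $\nu(\beta \circ \alpha)$ as an explicit alternating sequence of wedges and contractions applied to $\nu(\beta)$.

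Going down then right, one has $\nu(\beta) \in \bigwedge F_\eta \otimes \bigwedge F_{\Bar{\xi}}$ and $\rho^{-1}(\alpha) = \xi_I \otimes \Bar{\xi}_J$, and one expands the exponential into the finite sum
\[
\exp\Big( \sum_{i} \xi_i^* \Bar{\xi}_i^* \Big) = \sum_{S \subseteq \{1,\ldots,r\}} (-1)^{\binom{|S|}{2}} \Big( \prod_{i \in S} \xi_i^* \Big) \Big( \prod_{i \in S} \Bar{\xi}_i^* \Big)
\]
in which $\xi_i^*$ contracts the $\bigwedge F_\xi$ factor of $\rho^{-1}(\alpha)$ and $\Bar{\xi}_i^*$ the $\bigwedge F_{\Bar{\xi}}$ factor of $\nu(\beta)$. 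The projection $P$ onto the subspace with no $\xi$'s then forces $S = I$ (so that all output $\xi$'s of $\alpha$ are killed and $\Bar{\xi}_I$ must have occurred in the chosen term of $\nu(\beta)$), leaving a tensor in $\bigwedge F_\eta \otimes \bigwedge F_{\Bar{\xi}} \otimes \bigwedge F_{\Bar{\xi}}$ whose first two factors are the surviving part of $\nu(\beta)$ and whose third factor is $\Bar{\xi}_J$; the trailing $1 \otimes m$ concatenates the two $\Bar{\xi}$-blocks. Comparing with the first route is then pure sign bookkeeping: the $(-1)^{\binom{p}{2}}$ normalisations built into $\nu$ (Lemma \ref{lemma:iso_nu}) and the analogous sign in $\rho$ (Remark \ref{remark_rhoisoalg}), the $(-1)^{\binom{|S|}{2}}$ above, and the Koszul signs coming from $m$ and from the adjunction identity must all cancel, leaving equality.

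The conceptual content here is essentially nil; the only real obstacle is arranging these Koszul signs cleanly. I would control this by first proving the two single-generator cases of the adjunction identity stated above, and then bootstrapping to an arbitrary Clifford monomial $\alpha$ using that $\rho$ is an isomorphism of algebras (Remark \ref{remark_rhoisoalg}), so that a general $\alpha$ is a $\bullet$-product of generators and $\exp(\sum_i \xi_i^* \Bar{\xi}_i^*)$ is compatible with that product in the evident way. One might hope instead to deduce the lemma formally from Lemma \ref{lemma:mixedr2_0} via the identification $\End_k(\widetilde{X}) \cong \Hom_k(\widetilde{X},k) \otimes \widetilde{X}$, but checking that the two-sided contraction and the trailing $1 \otimes m$ reproduce the contraction of Lemma \ref{lemma:mixedr2_0} on the nose requires exactly the same sign computation, so there is no genuine shortcut, and I would simply carry out the direct check.
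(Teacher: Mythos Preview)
Your proposal is correct and is precisely the kind of direct calculation the paper has in mind: the paper's own treatment of this lemma (and its two neighbours) is simply the sentence ``proven by straightforward direct calculations, which we omit,'' so your outline supplies the omitted content rather than diverging from it. The adjunction identities you isolate --- that under $\nu$ precomposition by $\xi_i$ and $\xi_i^*$ transport to $\Bar{\xi}_i^*$ and $\Bar{\xi}_i$ respectively --- together with the algebra-morphism property of $\rho$ from Remark~\ref{remark_rhoisoalg}, are exactly the right tools, and your plan to bootstrap from single generators to arbitrary Clifford monomials is the cleanest way to organise the signs.
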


\begin{lemma}\label{lemma:mixedr2_2} The diagram
\be
\xymatrix@C+2pc@R+2pc{
\Hom_k( \widetilde{Y}, \widetilde{Y} ) \otimes \Hom_k( \widetilde{X}, \widetilde{Y} ) \ar[d]_-{\rho^{-1} \otimes \nu} \ar[r]^-{- \circ -} & \Hom_k( \widetilde{X}, \widetilde{Y} ) \ar[d]^-\nu\\
\big( \bigwedge F_\eta \otimes \bigwedge F_{\Bar{\eta}} \big) \otimes \big( \bigwedge F_\eta \otimes \bigwedge F_{\Bar{\xi}} \big) \ar[d]_-{\exp( \sum_i \eta_i^* \Bar{\eta}_i^* )} & \bigwedge F_\eta \otimes \bigwedge F_{\Bar{\xi}} \\
\big( \bigwedge F_\eta \otimes \bigwedge F_{\Bar{\eta}} \big) \otimes \big( \bigwedge F_\eta \otimes \bigwedge F_{\Bar{\xi}} \big) \ar[r]_-P & \bigwedge F_\eta \otimes \bigwedge F_\eta \otimes \bigwedge F_{\Bar{\xi}} \ar[u]_-{m \otimes 1}
}
\ee
commutes, where $P$ projects onto the subspace with no $\Bar{\eta}$'s and $m$ is multiplication in the exterior algebra. The operator $\eta_i^*$ in the exponential acts on the third tensor factor.
\end{lemma}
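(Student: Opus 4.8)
The plan is to check the square by direct computation, using the interpretation of $\nu$ and $\rho$ as normal-ordering isomorphisms. Both routes around the square are $k$-bilinear in the arguments $\psi \in \Hom_k(\widetilde{Y}, \widetilde{Y})$ and $\phi \in \Hom_k(\widetilde{X}, \widetilde{Y})$, so it suffices to check equality on pure tensors $\psi \otimes \phi$ for which $\rho^{-1}(\psi) = \omega_1 \otimes \tau_1$ and $\nu(\phi) = \omega_2 \otimes \tau_2$, with $\omega_1,\omega_2$ basis monomials of $\bigwedge F_\eta$, $\tau_1$ a basis monomial of $\bigwedge F_{\Bar{\eta}}$ and $\tau_2$ a basis monomial of $\bigwedge F_{\Bar{\xi}}$. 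By Remark~\ref{remark_rhoisoalg} the operator $\psi = \rho(\omega_1 \otimes \tau_1)$ is the composite of the creation operators read off from $\omega_1$ followed by the contraction operators read off from $\tau_1$; writing $c_{\tau_1} = \eta^*_{b_1}\circ\cdots\circ\eta^*_{b_q}$ for the iterated contraction attached to $\tau_1 = \Bar{\eta}_{b_1}\wedge\cdots\wedge\Bar{\eta}_{b_q}$, this says $\psi(w) = \omega_1 \wedge c_{\tau_1}(w)$. Unwinding Lemma~\ref{lemma:iso_nu}, the operator $\phi = \nu^{-1}(\omega_2\otimes\tau_2)$ sends $v \in \bigwedge F_\xi$ to $\langle \tau_2, v\rangle\,\omega_2$ where $\langle -,-\rangle$ is the pairing implicit in the definition of $\nu$ (the factors $(-1)^{\binom{p}{2}}$ being exactly the normalization making $\nu^{-1}(\omega\otimes\tau)$ equal to the operator $v \mapsto \langle\tau,v\rangle\,\omega$); in particular $\phi(v)$ always lies in $k\cdot\omega_2$.

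First I would compute the left-then-bottom composite. Since $\phi(v)$ is a scalar multiple of $\omega_2$, one has $(\psi\circ\phi)(v) = \langle \tau_2, v\rangle\,\psi(\omega_2)$, and unwinding the definition of $\nu$ once more gives $\nu(\psi\circ\phi) = \psi(\omega_2)\otimes\tau_2 = \bigl(\omega_1 \wedge c_{\tau_1}(\omega_2)\bigr)\otimes\tau_2$. So the $\bigwedge F_{\Bar{\xi}}$-slot is inert and the identity reduces to a purely Clifford-algebraic statement on $\bigwedge F_\eta$ involving only $\omega_1$, $\tau_1$ and $\omega_2$.

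Next I would compute the top-then-right composite on the same pure tensor. Because each $\eta_i^*$ squares to zero, the exponential $\exp(\sum_i \eta_i^* \Bar{\eta}_i^*)$, with $\eta_i^*$ acting on the third slot ($\omega_2$) and $\Bar{\eta}_i^*$ on the second slot ($\tau_1$), equals $\prod_i(1 + \eta_i^*\Bar{\eta}_i^*)$ and hence produces the signed sum, over subsets $S$ of indices, of the term obtained by deleting $\eta_i$ from $\omega_2$ and $\Bar{\eta}_i$ from $\tau_1$ for each $i\in S$, with Koszul signs dictated by the contraction formula behind \eqref{eq:wedge_contract_comm}. The projection $P$ onto the subspace with no $\Bar{\eta}$'s retains exactly the single term $S = \{b_1,\ldots,b_q\}$ (the index set of $\tau_1$), which is nonzero precisely when that index set is contained in the index set of $\omega_2$; the residue in the third slot is then, up to sign, the monomial $\omega_2'$ with those letters removed, and the second slot has become $1$. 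Finally $m\otimes 1$ multiplies $\omega_1$ (first slot) with $\omega_2'$ (third slot) and leaves $\tau_2$ untouched, yielding $\pm(\omega_1 \wedge \omega_2')\otimes\tau_2$.

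Comparing the two outputs, the lemma comes down to the identity $c_{\tau_1}(\omega_2) = \pm\,\omega_2'$ and to matching the sign produced by the iterated contraction with the sign produced by the single surviving term of the $\exp$-then-$P$ expansion. This is the standard fact that $\eta^*_{b_1}\circ\cdots\circ\eta^*_{b_q}$ acts on a wedge monomial by signed deletion of the letters $\eta_{b_1},\ldots,\eta_{b_q}$, organised as fermionic Wick contractions; it follows by induction on $q = \deg\tau_1$, peeling off one contraction $\eta^*_{b_q}$ and matching it against the factor $(1 + \eta^*_{b_q}\Bar{\eta}^*_{b_q})$, using only the relation $[\eta_i,\eta_j^*] = \delta_{ij}$ (cf.\ \eqref{eq:wedge_contract_comm}) and the fact that odd operators on distinct tensor factors anticommute. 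There is no conceptual obstacle: the only real work is tracking signs --- the $(-1)^{\binom{p}{2}}$ built into $\nu$ (which cancels against the sign appearing in the evaluation of $\langle\tau_2,v\rangle$, exactly as in the proof that $\nu$ is an isomorphism), the Koszul signs from commuting the odd operator $c_{\tau_1}$ past $\omega_2\wedge(-)$, and the interleaving sign produced by $m$ --- and I expect them to match once one fixes the convention that in each factor $(1 + \eta_i^*\Bar{\eta}_i^*)$ the third-slot contraction $\eta_i^*$ is applied before the second-slot contraction $\Bar{\eta}_i^*$, which is the order in which these contractions arise when $\psi\circ\phi$ is normal-ordered starting from the right. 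The argument is a longer but completely parallel version of the computations behind Lemmas~\ref{lemma:mixedr2_0} and \ref{lemma:mixedr2_1}; the present lemma is the ``compose on the left'' analogue of Lemma~\ref{lemma:mixedr2_1}, with the extra multiplication $m$ here reassembling creation operators where in Lemma~\ref{lemma:mixedr2_1} it reassembled annihilation operators.
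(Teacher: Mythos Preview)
Your approach is correct and is exactly the ``straightforward direct calculation'' that the paper alludes to but omits (the paper gives no proof beyond that phrase). Your reduction to pure tensors via $\rho^{-1}(\psi)=\omega_1\otimes\tau_1$, $\nu(\phi)=\omega_2\otimes\tau_2$, the identification $\nu(\psi\circ\phi)=(\omega_1\wedge c_{\tau_1}(\omega_2))\otimes\tau_2$, and the matching of $c_{\tau_1}$ against the unique surviving term of $P\circ\exp(\sum_i\eta_i^*\Bar{\eta}_i^*)$ is the expected route; the only content is the sign bookkeeping you describe, and that goes through.
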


These lemmas allow us to represent the $\alpha \circ \beta$ part of $\mu_2$ as a boundary condition $P$ together with new types of interaction vertices. From Lemma \ref{lemma:mixedr2_0} we obtain the (D.1)-type vertex, in which $\eta_i^*\Bar{\eta}_i^*$ couples an incoming $\eta_i$ in the right branch with an incoming $\Bar{\eta}_i$ (which we view as an $\eta_i$ travelling upward) in the left branch. From Lemma \ref{lemma:mixedr2_1} we obtain the (D.2)-type vertex, which has a similar description. To these interaction vertices we add the (D.3)-type vertex, which represents the $\Gamma^{hl}_{k\delta}z_k( z_h^* \otimes z_l^* )$ part of the $\mu_2$ operator in Lemma \ref{lemma_mu2presentation} (keeping in mind that the incoming $z_h$ and $z_l$ are on the left and right branch at an internal vertex of the tree, respectively):

\begin{center}
\begin{tabular}{ >{\centering}m{5cm} >{\centering}m{5cm} >{\centering}m{5cm} }
\textbf{(D.1) $+1$}
\vspace{0.1cm}
\[
\xymatrix@C+1pc@R+1.5pc{
& & \ar[dl]^-{\eta_i} \\
& \bullet \ar[ul]^-{\eta_i}\\
& & &
}
\]
&
\textbf{(D.2) $+1$}
\vspace{0.1cm}
\[
\xymatrix@C+1pc@R+1.5pc{
& & \ar[dl]^-{\xi_i} \\
& \bullet \ar[ul]^-{\xi_i}\\
& & &
}
\]
&
\textbf{(D.3) $\Gamma^{hl}_{k\delta}$}
\vspace{0.1cm}
\[
\xymatrix@C+1pc@R+1.5pc{
\ar@{.}[dr]^-{h} & & \ar@{.}[dl]^-{l}\\
& \bullet \ar@{.}[d]_-{k} \ar@{~}[dr]^-{t^\delta}\\
& & &
}
\]
\end{tabular}
\end{center}

Note that in the ``mixed'' cases of Lemma \ref{lemma:mixedr2_1} and Lemma \ref{lemma:mixedr2_2} there are still multiplication operators $m$ in the final presentation. For example the $1 \otimes m$ in Lemma \ref{lemma:mixedr2_1} means that an upward travelling $\xi$ entering the vertex can either continue upwards into the left branch, or into the right branch. More precisely, since $\Bar{\xi}_i^*$ is a graded derivation $\Bar{\xi}_i^* m = m (\Bar{\xi}_i^* \otimes 1) + m( 1 \otimes \Bar{\xi}_i^* )$. A similar description applies to the $m \otimes 1$ in Lemma \ref{lemma:mixedr2_2}. 

\subsection{The Feynman rules}\label{section:feynman_diagram_4}

We now integrate the previous sections into a method for reasoning about higher operations $\rho_k$ using Feynman diagrams. We fix matrix factorisations $X_0,\ldots,X_k \in \AA$ which we assume to be Koszul with underlying graded $k$-modules $\widetilde{X}_i = \bigwedge F^{(i)}_\xi$. Once we choose for each pair $(i,i+1)$ and $(0,k)$ either $\nu$ or $\rho$ to present the mapping spaces $\BB(X_i, X_{i+1})$ as a tensor product of exterior algebras, the higher operation $\rho_k$ is a $k$-linear map
\be
\rho_k: \bigotimes_{i=0}^{k-1}\Big[ R/I \otimes \bigwedge F_{\xi}^{(i+1)} \otimes \bigwedge F_{\Bar{\xi}}^{(i)} \Big][1] \lto \Big[ R/I \otimes \bigwedge F_{\xi}^{(k)} \otimes \bigwedge F_{\Bar{\xi}}^{(0)} \Big] [1]
\ee
which is a (signed) sum of operators $\rho_T$ for binary plane trees $T$ with $k$ inputs. According to Lemma \ref{prop:replacer2} evaluating $\rho_T( \beta_1, \ldots, \beta_k )$ involves applying $\operatorname{eval}_{D'}$ to the input $\beta_k \otimes \cdots \otimes \beta_1$. We develop a diagrammatic understanding of the evaluation of $\operatorname{eval}_{D'}$ on this tensor, via Feynman diagrams embedded in a thickening of the mirror tree $T'$. Given a basis vector
\[
\tau \in R/I \otimes \bigwedge F_\xi^{(k)} \otimes \bigwedge F_{\Bar{\xi}}^{(0)}
\]
we wish to know the coefficient of $\tau$ in the evaluation $\eval_{D'}( \beta_k, \ldots, \beta_1 )$, which we denote
\be\label{eq:defn_ctau}
C_\tau := \tau^* \eval_{D'}( \beta_k, \ldots, \beta_1 ) \in k\,.
\ee
The description of diagrams contributing to $C_\tau$ is reached in several stages, which are summarised by the Feynman rules in Definition \ref{defn:feynman_rules}. To explain the algorithm it will be helpful to keep in mind the data structure
\be
\cat{D} = \{ ( 1, \tau, \operatorname{eval}_{D'}, \beta_k \otimes \cdots \otimes \beta_1 ) \}\,.
\ee
This data structure will be modified as we proceed, but it will always be a sequence of tuples $(\lambda, \alpha, \psi, \beta)$ consisting of a scalar $\lambda \in k$, an output basis vector $\alpha$, a $k$-linear operator $\psi$ with the same domain and codomain as $\operatorname{eval}_{D'}$, and an input basis vector $\beta$. Each time we modify $\cat{D}$ the sum will remain invariant, that is, we will always have
\be\label{eq:ctauconstraint}
C_\tau = \sum_{(\lambda, \alpha, \psi, \beta) \in \cat{D}} \lambda \alpha^* \psi( \beta )\,.
\ee
In the following ``the tree'' means $T'$ unless specified otherwise. 
\\

\textbf{Stage one: expansion.} Recall $\operatorname{eval}_{D'}$ is defined as a composition of operators
\[
\sigma_\infty, \phi_\infty, e^{\delta}, e^{-\delta}, \mu_2, \pi
\]
which may be written, with some signs and factorials, in terms of the operators\footnote{For the reader's convenience, here is a cheatsheet: for $\zeta$ see Section \ref{section:propagator}, $\vAt_{\AA}$ is the Atiyah class of Definition \ref{defn:atiyah_class}, $\nabla$ the chosen connection from Corollary \ref{corollary:nabla_sigma}, $\sigma$ the chosen section of the quotient map $\pi: R \lto R/I$, for $\delta$ see Definition \ref{defn:important_operators}, $\mu_2$ is ordinary composition in $\AA'_\theta$.}
\[
\zeta, \vAt_{\AA}, \nabla, \sigma, \delta, \mu_2, \pi\,.
\]
The signs and factorials involved are accounted for carefully in Definition \ref{defn:feynman_rules} below; for clarity we omit them in the present discussion. Choose a presentation $\nu$ or $\rho$ for each edge in $T'$. We expand occurrences of $\vAt_{\AA}, \nabla, \delta$ using \eqref{eq:a1vertex}-\eqref{eq:a4vertex},\eqref{eq:c1vertex},\eqref{eq:c2vertex} on the edges of the tree for which $\nu$ has been chosen and \eqref{eq:a3vertex_rho},\eqref{eq:a4vertex_rho},\eqref{eq:c1vertex_rho},\eqref{eq:c2vertex_rho} on the edges for which $\rho$ is chosen. We replace the occurrences of $\mu_2$ using the lemmas of Section \ref{section:fenyman_diagram_2} with additional exponentials, occurrences of $m$ and $\Gamma$ contributions from (D.3) vertices. Next we absorb the $\tau^*$ and $\beta_k \otimes \cdots \otimes \beta_1$ from \eqref{eq:defn_ctau} into the operator decorated tree, by writing the input tensors $\beta_i$ as products of creation operators and the projection $\tau^*$ as a product of annihilation operators followed by the projection $1^*$ (here we assume for simplicity that $z_1 = 1$ in the chosen basis for $R/I$). In the resulting expression the remaining terms that are \emph{not} creation and annihilation operators are occurrences of $\zeta$ and the multiplications $m$ on exterior algebras from Lemma \ref{lemma:mixedr2_1} and Lemma \ref{lemma:mixedr2_2}. Since the relevant virtual degrees are now all fixed, we can calculate the scalar contribution from the $\zeta$ operators and absorb them into the $\lambda$ coefficients. Hence we can replace $\cat{D}$ by a sequence of tuples
\[
(\lambda, 1^*, \psi, 1)
\]
in which every $\psi$ is the denotation of an operator decorated tree (in the sense of Definition \ref{defn:evaluation_tree}) with operators taken from the list (here $\xi, \Bar{\xi}$ stand for any fermionic generator coming from the matrix factorisations themselves, so we do not separately list $\eta, \Bar{\eta}$)
\be\label{eq:possible_operators}
\theta, \theta^*, z, z^*, t, \partial_t, \xi, \xi^*, \Bar{\xi}, \Bar{\xi}^*, m, \pi\,.
\ee
This completes the expansion stage.
\\

\textbf{Stage two: reduction to normal form.} The operators $\psi$ in $\cat{D}$ are denotations of trees decorated by monomials in creation and annihilation operators. Such an operator (or more precisely the decoration from which it arises) is said to be in \emph{normal form} if any path from an annihilation operator in the tree to an input leaf encounters no creation operators (roughly, annihilation operators appear to the right of creation operators). After stage one the operators $\psi$ are not in normal form, and we now explain a rewrite process which transforms $\cat{D}$ such that after each step \eqref{eq:ctauconstraint} holds, and the process terminates with the operator $\psi$ in every tuple of $\cat{D}$ in normal form. During some steps of the rewrite process a tuple $(\lambda, 1^*, \psi, 1) \in \cat{D}$ is replaced by a pair $\{ (\lambda_i, 1^*, \psi_i, 1) \}_{i=1}^2$, because the rewriting is ``nondeterministic'' in the sense that it involves binary choices. A Feynman diagram is a \emph{graphical representation of such binary choices made during rewriting.} 

Here is the informal algorithm for the rewrite process: take the first tuple $(\lambda, 1^*, \psi, 1)$ in $\cat{D}$ which contains a fermionic annihilation operator, and let $\omega^*$ be the one occurring closest to the root (so $\omega$ is $\theta, \xi$ or $\Bar{\xi}$) and use the available (anti)commutation relations to move it up the tree past the other operators in \eqref{eq:possible_operators}, changing the sign of $\lambda$ as appropriate. The only nontrivial anticommutators that we encounter are:
\begin{itemize}
\item[(a)] $\omega^*$ meets $m$ and generates two additional terms 
\[
\omega^* m = m( 1 \otimes \omega^* ) + m( \omega^* \otimes 1)
\]
\begin{center}
\includegraphics[scale=0.6]{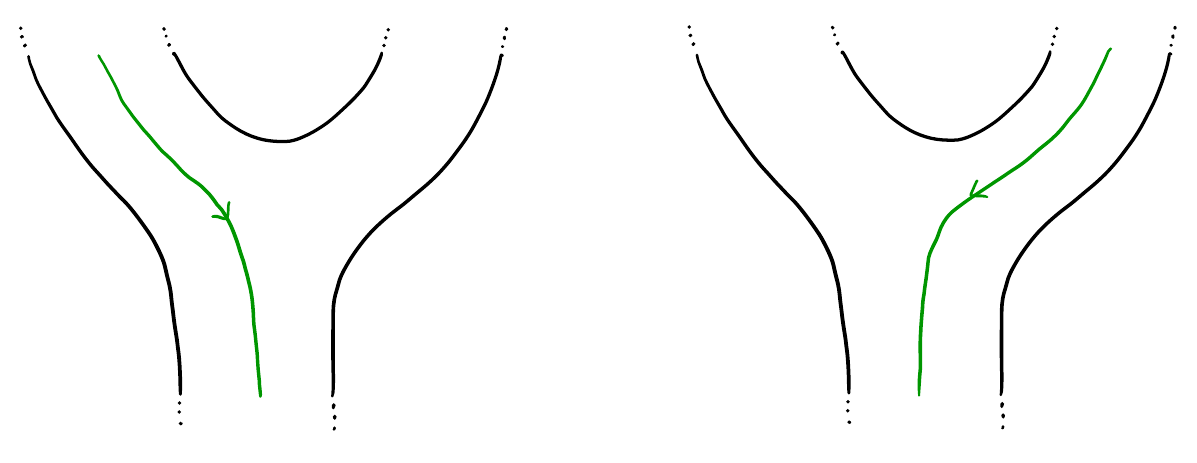}
\end{center}

\item[(b)] $\omega^*$ meets $\omega$ and generates two additional terms
\[
\omega^* \omega = - \omega \omega^* + 1
\]
\begin{center}
\includegraphics[scale=0.6]{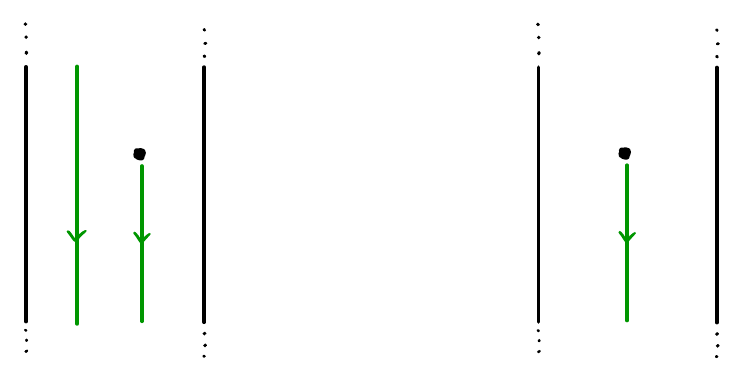}
\end{center}

\item[(c)] $\omega^*$ meets the input $1$.
\end{itemize}
In (a), (b) the tuple $(\lambda, 1^*, \psi, 1)$ is replaced in $\cat{D}$ by two new tuples, in which the decoration of the tree differs from the one determining $\psi$ only in the indicated way (changing the coefficient $\lambda$ by a sign if $\omega^* \omega$ is replaced by $\omega \omega^*$). In (c) we remove the tuple $(\lambda, 1^*, \psi, 1)$ from $\cat{D}$, since it contributes zero to $C_\tau$. The meaning of the pictures will become clear later. We say the occurrence of $\omega^*$ in these new tuples is \emph{descended} from the original $\omega^*$ and we continue the process of commuting these descendents upwards until in $\cat{D}$ there are no tuples containing operators $\omega^*$ descended from our original annihilation operator. Once this is done we return to the beginning of the loop, choosing a new fermionic annihilation operator in $\cat{D}$ as our $\omega^*$. This part of the algorithm terminates when there are no fermionic annihilation operators remaining in $\cat{D}$. It is possible that $\cat{D}$ is now empty, so that $C_\tau = 0$ and the overall algorithm terminates.

We next treat the occurrences of the bosonic annihilation operators $\partial_{t_i}$ in the same way, with the only nontrivial commutation relation being $\partial_{t_i} t_i = t_i \partial_{t_i} + 1$ which generates two new tuples in $\cat{D}$. The $z_h^*$ operators act on the next $z_l$ to give scalar factors $\delta_{h=l}$. This part of the algorithm terminates when there are no annihilation operators remaining, that is, we have replaced $\cat{D}$ by a sequence of tuples $(\lambda, 1^*, \psi, 1)$ in which every $\psi$ is the denotation of an operator decorated tree with operators taken from
\[
\theta, t, \xi, \Bar{\xi}, m, \pi\,.
\]
Since we apply $\pi$ and $1^*$ at the bottom of our diagrams, we do not change the coefficient \eqref{eq:ctauconstraint} if we delete from $\cat{D}$ any tuple in which $\psi$ contains a creation operator. After doing so, the remaining tuples all have the same decoration by $m$'s and $\pi$ and hence the same operator $\psi_{final}$, so at the completion of the second stage we have replaced $\cat{D}$ by a set
\[
\Big\{ (C_{\tau, F}, 1^*, \psi_{final}, 1) \Big\}_{F \in \cat{F}}
\]
where $\cat{F}$ is some index set. Hence $C_\tau = \sum_{F \in \cat{F}} C_{\tau, F}$.
\\

\textbf{Stage three: drawing the diagram.} An index $F \in \cat{F}$ contains the information of a sequence of binary choices made at each nontrivial step of the rewrite process: for example, the choice to replace $\omega^* \omega$ by either $\pm \omega \omega^*$ or $1$, where $\omega$ is one of $\theta, t, \xi, \Bar{\xi}$. The tuple $(\lambda, 1^*, \psi, 1)$ to which this choice refers has a unique ancestor among the tuples in $\cat{D}$ at the end of stage one. In that ancestor tuple, the creation operator $\omega$ is associated with a particular monomial $P$ inserted at a location on $T'$, and the annihilation operator $\omega^*$ is associated with a monomial $P'$ at some other location. By construction $P'$ occurs lower on the tree than $P$. These monomials are precisely the \emph{interaction vertices} to which we have assigned names and diagrams above.

To encode the information in $F$ diagrammatically, we draw all the interaction vertices (in the unique ancestor of the tuple $(C_{\tau, F}, 1^*, \psi_{final}, 1)$ among the tuples of stage one) at the location that they occur in the decoration of $T'$. Each incoming line labelled $\omega$ to such an interaction vertex is uniquely associated by the choices in $F$ with an outgoing line labelled $\omega$ at some vertex higher up the tree, namely, the first and only creation operator where $F$ chooses $1$ rather than $\pm \omega \omega^*$ for the annihilation operator $\omega^*$ whose ancestor is the chosen incoming line. We connect the two interaction vertices by joining them with a line labelled $\omega$. The resulting diagrammatic representation of $F$ is called a \emph{Feynman diagram}, see Figure \ref{fig:feynman_1} for an example.
\\

In summary, we have the following Feynman rules for enumerating Feynman diagrams $F$ and computing their coefficient $C_{\tau, F}$. See \cite[\S 6.1]{weinberg} for real Feynman rules in QFT.

\begin{definition}[(Feynman rules)]\label{defn:feynman_rules} The coefficient $C_\tau$ is a sum $\sum_{F \in \cat{F}} C_{\tau, F}$ of coefficients $C_{\tau, F}$ associated to Feynman diagrams $F$. To enumerate the possible Feynman diagrams, first draw $\beta_k,\ldots,\beta_1$ in order on the input leaves and $\tau$ on the root of $T'$, as sequences of incoming and outgoing particle lines. Then choose for each
\begin{itemize}
\item \textbf{input} a sequence of A-type vertices, then a sequence of C-type vertices (from $e^{\delta} \sigma_\infty$). 
\item \textbf{internal vertex} a sequence of vertices of type (D.1),(D.2) and exactly one (D.3) vertex (this arising from multiplication in $R/I$).
\item \textbf{internal edge} a sequence of C-type vertices, then a single B-type vertex, then a sequence of A-type vertices and a sequence of C-type vertices (from $e^{\delta} \phi_\infty e^{-\delta}$).
\item and for the \textbf{outgoing edge} a sequence of C-type vertices (from $\pi e^{-\delta}$).
\end{itemize}
Draw the chosen interaction vertices on the thickened tree and choose a way of connecting outgoing lines at vertices to incoming lines of the same type lower down the tree. Since the only lines incident with the incoming and outgoing boundary of the tree are those arising from the $\beta_i$ and $\tau$ no ``virtual particles'' ($t$'s or $\theta$'s) may enter or leave the diagram. In each case the number of A, C, D-type vertices chosen may be zero, but every internal edge has precisely one B-type vertex. These choices parametrise a finite set of Feynman diagrams $\cat{F}$. The coefficient $C_{\tau, F}$ contributed by the Feynman diagram $F$ is the product of the following five contributing factors:
\begin{itemize}
\item[(i)] The coefficients associated to each A,C-type interaction vertex as given above (for example the vertex $(A.1)^{\nu}$ has coefficient $\sum_{\alpha + \beta = \delta } \sum_{m=1}^\mu (u_j)_{(m,\alpha)} \Gamma^{m h}_{l \beta}$ multiplied by the scalar arising from the partial derivative $\partial_{t_k}(t^\delta)$).
\item[(ii)] For each sequence of $m$ C-type vertices a factorial $\frac{1}{m!}$ and if the sequence immediately preceedes a B-type vertex or the root, a sign $(-1)^m$ (from $e^{-\delta}$ versus $e^\delta$).
\item[(iii)] $Z^{\,\rightarrow}$ factors from $\zeta$ operators (see Section \ref{section:fenyman_diagram_2}).
\item[(iv)] For each time two fermion lines cross, a factor of $-1$.
\item[(v)] For each A-type vertex a factor of $-1$ (from the $(-1)^m$ in $\sigma_\infty,\phi_\infty$).
\end{itemize}
\end{definition}

\begin{remark}\label{remark:boson_symmetry} Recall that we draw an interaction vertex $v$ with an outgoing line labelled $t^\delta = t_1^{\delta_1} \cdots t_n^{\delta_n}$ in our pictures, the convention is that such a line stands for $|\delta|$ separate lines labelled $t_i$ for some $i$. The above description of the Feynman rules involves choosing, for each such $v$, a series of B-type vertices to pair with each of these lines. 

This leads to $\delta_1! \cdots \delta_n!$ otherwise identical diagrams, in which the only difference is \emph{which} of the $\delta_i$ lines labelled $t_i$ is paired with which $(B)_i$ vertex. The usual convention is to draw just one such diagram, counted with a symmetry factor $\delta_1! \cdots \delta_n!$.
\end{remark}

\begin{remark}\label{remark:symmetry_A_type} The rules involve \emph{sequences} of A and C-type interactions: different orderings are different Feynman diagrams. In general the Feynman diagrams $F,F'$ associated to different orderings have $C_{\tau, F} \neq C_{\tau, F'}$ because the interaction vertices are operators that do not necessarily commute. In practice, however, one can often infer that the incoming state to a particular edge is constrained to lie in a subspace $\KK \subseteq \HH$ on which all the relevant operators \emph{do} commute, in which case the different orderings can be grouped together and counted as a single diagram with an appropriate symmetry factor (cancelling, in the C-type case, the scalar factor of (ii) in the Feynman rules).

The situation for a length $m$ sequence of A-type vertices is more subtle, because these arise from powers of $\zeta \vAt_{\AA}$ and the operators arising from $\vAt_{\AA}$ do not commute with $\zeta$. However, when the incoming state lies in a subspace $\KK$ to which Lemma \ref{lemma:technical_antic} applies, we can group together permutations of the $m$ vertices and the $Z^{\,\rightarrow}$ factors become a symmetrised $Z$. This applies in the context of Section \ref{section:generator}.
\end{remark}

\begin{example} Consider the potential $W = \frac{1}{5} x^5$ and matrix factorisations
\begin{align}
X &= \big( \bigwedge(k \xi) \otimes k[x], x^2 \xi^* + \frac{1}{5} x^3 \xi \big)\\
Y &= \big( \bigwedge(k \eta) \otimes k[x], x^3 \eta^* + \frac{1}{5} x^2 \eta \big)
\end{align}
so $f = x^2, u = x^3, g = \frac{1}{5}x^3, v = \frac{1}{5} x^2$. We set $t = \partial_x W = x^4$ and choose our connection $\nabla$ and operators $\partial_t$ as in Example \ref{example:dt_xd} with $d = 4$. For the homotopies $\lambda^Y, \lambda^X$ we use the default choices of Remark \ref{remark:default_homotopies}, so that
\begin{align*}
\lambda^X &= \partial_x( d_X ) = 2 x \xi^* + \frac{3}{5} x^2 \xi && F^X = 2x, \quad G^X = \frac{3}{5} x^2\\
\lambda^Y &= \partial_x( d_Y ) = 3 x \eta^* + \frac{2}{5} x \eta && F^Y = 3x, \quad G^Y = \frac{2}{5} x\,.
\end{align*}
To compute the coefficients associated to all the interaction vertices, we need to compute for various $r \in R$ the coefficients $r_{(m,\alpha)}$ (see Definition \ref{defn:rsharp}) as well as the tensor $\Gamma$. This involves fixing the $k$-basis $R/I = k[x]/x^4 = k1 \oplus kx \oplus kx^2 \oplus kx^3$ that is, $z_h = x^h$ for $0 \le h \le 3$, and the section $\sigma(x^i) = x^i$. Then for example
\[
x^3 = 1 \cdot \sigma( x^3 ) t^0
\]
and in general $(x^a)_{(m,\alpha)} = \delta_{m = a} \delta_{\alpha = 0}$ for $0 \le a \le 3$. Since all the polynomials occurring in $f,g,u,v,F,G$ have degree $\le 3$ these coefficients are all easily calculated as delta functions in this way. The tensor $\Gamma$ encodes the multiplication in $R/I$ and is given by Definition \ref{defn_gamma} for $0 \le m,h \le 3$ by
\[
\Gamma^{mh}_{l \beta} = \delta_{m+h \le 3}\delta_{l = m+h}\delta_{\beta = 0} + \delta_{m+h > 3} \delta_{l = m+h-4} \delta_{\beta = 1}\,.
\]
To present interaction vertices as creation and annihilation operators we use $\rho$ on all edges of the tree involving pairs $(Y,Y)$ and $(X,X)$ and $\nu$ on edges involving $(X,Y)$. Such edges involve $\eta$'s travelling downward and $\xi$'s travelling upward, and the interactions with their coefficients are (recall our convention is to write $h$ for $z_h$ in these diagrams):
\begin{center}
\begin{tabular}{ >{\centering}m{5cm} >{\centering}m{5cm} >{\centering}m{5cm} }
\textbf{(A.1)${}^\nu_{h > 0}$ $+1$}
\vspace{0.1cm}
\[
\xymatrix@C+1pc@R+1.5pc{
& \ar@{.}[d]^-{h} & \ar[dl]^-{\eta}\\
& \bullet \ar@{=}[dl]^-{\theta} \ar@{.}[d]^-{h-1}\\
& &
}
\] 
&
\textbf{(A.2)${}^\nu_{h > 1}$ $+\frac{1}{5}$}
\vspace{0.1cm}
\[
\xymatrix@C+1pc@R+1.5pc{
& \ar@{.}[d]^-{h}\\
& \bullet \ar@{=}[dl]_-{\theta} \ar@{.}[d]^-{h-2} \ar[dr]^-{\eta}\\
& &
}
\]
&
\textbf{(A.3)${}^\nu_{h>1}$ $-1$}
\vspace{0.1cm}
\[
\xymatrix@C+1pc@R+1.5pc{
& \ar@{.}[d]^-{h}\\
& \bullet \ar@{=}[dl]_-{\theta} \ar@{.}[d]^-{h-2} \\
& & \ar[ul]_-{\xi}
}
\]
\end{tabular}
\end{center}

\begin{center}
\begin{tabular}{ >{\centering}m{5cm} >{\centering}m{5cm} >{\centering}m{5cm} }
\textbf{(A.4)${}^\nu_{h>0}$ $+\frac{1}{5}$}
\vspace{0.1cm}
\[
\xymatrix@C+1pc@R+1.5pc{
& \ar@{.}[d]^-{h} &\\
& \bullet \ar[ur]_-{\xi} \ar@{=}[dl]^-{\theta} \ar@{.}[d]^-{h-1} \\
& &
}
\] 
&
\textbf{(B)}
\vspace{0.1cm}
\[
\xymatrix@R+1.5pc{
\ar@{~}[d]^-{t}\\
\bullet \ar@{=}[d]^-{\theta}\\
\;
}
\]
& 
\textbf{(C.1)${}^\nu_{h\le2}$ $+3$}
\vspace{0.1cm}
\[
\xymatrix@C+1pc@R+1.5pc{
\ar[dr]_-{\eta} & \ar@{.}[d]^-{h} & \ar@{=}[dl]^-{\theta}\\
& \bullet \ar@{.}[d]^-{h+1} \\
& &
}
\] 
\end{tabular}
\end{center}

\begin{center}
\begin{tabular}{ >{\centering}m{5cm} >{\centering}m{5cm} >{\centering}m{5cm} }
\textbf{(C.1)${}^\nu_{h>2}$ $+3$}
\vspace{0.1cm}
\[
\xymatrix@C+1pc@R+1.5pc{
\ar[dr]_-{\eta} & \ar@{.}[d]^-{h} & \ar@{=}[dl]^-{\theta}\\
& \bullet \ar@{.}[d]^-{h-3} \ar@{~}[dl]^-{t}\\
& &
}
\] 
&
\textbf{(C.2)$^\nu_{h \le 2}$ $+\frac{2}{5}$}
\vspace{0.1cm}
\[
\xymatrix@C+1pc@R+1.5pc{
& \ar@{.}[d]^-{h} & \ar@{=}[dl]^-{\theta} \\
& \bullet \ar[dl]^-{\eta}\ar@{.}[d]^-{h+1} \\
& &
}
\]
&
\textbf{(C.2)$^\nu_{h > 2}$ $+\frac{2}{5}$}
\vspace{0.1cm}
\[
\xymatrix@C+1pc@R+1.5pc{
& \ar@{.}[d]^-{h} & \ar@{=}[dl]^-{\theta} \\
& \bullet \ar[dl]^-{\eta}\ar@{.}[d]^-{h-3} \ar@{~}[dr]^-{t}\\
& &
}
\] 
\end{tabular}
\end{center}

On the edges involving $X$ purely, where we are using the $\rho$ presentation, we have the following interaction vertices (we omit the B-type which is as above):

\begin{center}
\begin{tabular}{ >{\centering}m{5cm} >{\centering}m{5cm} >{\centering}m{5cm} }
\textbf{(A.1)${}^{X,\rho}_{h>1}$ $+1$}
\vspace{0.1cm}
\[
\xymatrix@C+1pc@R+1.5pc{
& \ar@{.}[d]^-{h} & \ar[dl]^-{\xi}\\
& \bullet \ar@{=}[dl]_-{\theta} \ar@{.}[d]^-{h-2} \\
& & &
}
\] 
&
\textbf{(A.4)${}^{X,\rho}_{h>0}$ $+\frac{1}{5}$}
\vspace{0.1cm}
\[
\xymatrix@C+1pc@R+1.5pc{
& \ar@{.}[d]^-{h} &\\
& \bullet \ar[ur]_-{\xi} \ar@{=}[dl]^-{\theta} \ar@{.}[d]^-{h-1}\\
& &
}
\] 
&
\textbf{(C.1)${}^{X,\rho}_{h \le 2}$ $+2$}
\vspace{0.1cm}
\[
\xymatrix@C+1pc@R+1.5pc{
\ar[dr]_-{\xi} & \ar@{.}[d]^-{h} & \ar@{=}[dl]^-{\theta}\\
& \bullet \ar@{.}[d]^-{h+1}\\
& &
}
\]
\end{tabular}
\end{center}

\begin{center}
\begin{tabular}{ >{\centering}m{5cm} >{\centering}m{5cm} >{\centering}m{5cm} }
\textbf{(C.1)${}^{X,\rho}_{h > 2}$ $+2$}
\vspace{0.1cm}
\[
\xymatrix@C+1pc@R+1.5pc{
\ar[dr]_-{\xi} & \ar@{.}[d]^-{h} & \ar@{=}[dl]^-{\theta}\\
& \bullet \ar@{.}[d]^-{h-3} \ar@{~}[dl]^-{t}\\
& &
}
\]
&
\textbf{(C.2)${}^{X,\rho}_{h \le 1}$ $+\frac{3}{5}$}
\vspace{0.1cm}
\[
\xymatrix@C+1pc@R+1.5pc{
& \ar@{.}[d]^-{h} & \ar@{=}[dl]^-{\theta} \\
& \bullet \ar[dl]^-{\xi}\ar@{.}[d]^-{h+2}\\
& &
}
\]
&
\textbf{(C.2)${}^{X,\rho}_{h > 1}$ $+\frac{3}{5}$}
\vspace{0.1cm}
\[
\xymatrix@C+1pc@R+1.5pc{
& \ar@{.}[d]^-{h} & \ar@{=}[dl]^-{\theta} \\
& \bullet \ar[dl]^-{\xi}\ar@{.}[d]^-{h-2} \ar@{~}[dr]^-{t}\\
& &
}
\]
\end{tabular}
\end{center}

\begin{center}
\begin{tabular}{ >{\centering}m{6cm} >{\centering}m{6cm} }
\textbf{(C.3)${}^{X,\rho}_{h \le 2}$ $+2$}
\vspace{0.1cm}
\[
\xymatrix@C+2pc@R+1.5pc{
& \ar@{.}[d]^-{h} & \ar@{=}[dl]^-{\theta} \\
& \bullet \ar@{.}[d]^-{h+1} \\
\ar[ur]^-{\xi} & &
}
\]
&
\textbf{(C.3)${}^{X,\rho}_{h > 2}$ $+2$}
\vspace{0.1cm}
\[
\xymatrix@C+2pc@R+1.5pc{
& \ar@{.}[d]^-{h} & \ar@{=}[dl]^-{\theta} \\
& \bullet \ar@{.}[d]^-{h-3} \ar@{~}[dr]^-{t}\\
\ar[ur]^-{\xi} & &
}
\]
\end{tabular}
\end{center}

On edges involving $Y$ purely, where again we use the $\rho$ presentation, we have:

\begin{center}
\begin{tabular}{ >{\centering}m{5cm} >{\centering}m{5cm} >{\centering}m{5cm} }
\textbf{(A.1)${}^{Y,\rho}_{h > 0}$ $+1$}
\vspace{0.1cm}
\[
\xymatrix@C+1pc@R+1.5pc{
& \ar@{.}[d]^-{h} & \ar[dl]^-{\eta}\\
& \bullet \ar@{=}[dl]_-{\theta} \ar@{.}[d]^-{h-1} \\
& &
}
\]
&
\textbf{(A.4)${}^{Y,\rho}_{h>1}$ $+\frac{1}{5}$}
\vspace{0.1cm}
\[
\xymatrix@C+1pc@R+1.5pc{
& \ar@{.}[d]^-{h} &\\
& \bullet \ar[ur]_-{\eta} \ar@{=}[dl]^-{\theta} \ar@{.}[d]^-{h-2} \\
& &
}
\]
&
\textbf{(C.1)${}^{Y,\rho}_{h \le 2}$ $+3$}
\vspace{0.1cm}
\[
\xymatrix@C+1pc@R+1.5pc{
\ar[dr]_-{\eta} & \ar@{.}[d]^-{h} & \ar@{=}[dl]^-{\theta}\\
& \bullet \ar@{.}[d]^-{h+1} \\
& &
}
\]
\end{tabular}
\end{center}

\begin{center}
\begin{tabular}{ >{\centering}m{5cm} >{\centering}m{5cm} >{\centering}m{5cm} }
\textbf{(C.1)${}^{Y,\rho}_{h>2}$ $+3$}
\vspace{0.1cm}
\[
\xymatrix@C+1pc@R+1.5pc{
\ar[dr]_-{\eta} & \ar@{.}[d]^-{h} & \ar@{=}[dl]^-{\theta}\\
& \bullet \ar@{.}[d]^-{h-3} \ar@{~}[dl]^-{t}\\
& &
}
\]
&
\textbf{(C.2)${}^{Y,\rho}_{h \le 2}$ $+\frac{2}{5}$}
\vspace{0.1cm}
\[
\xymatrix@C+2pc@R+1.5pc{
& \ar@{.}[d]^-{h} & \ar@{=}[dl]^-{\theta} \\
& \bullet \ar[dl]^-{\eta}\ar@{.}[d]^-{h+1}\\
& &
}
\]
&
\textbf{(C.2)${}^{Y,\rho}_{h > 2}$ $+\frac{2}{5}$}
\vspace{0.1cm}
\[
\xymatrix@C+2pc@R+1.5pc{
& \ar@{.}[d]^-{h} & \ar@{=}[dl]^-{\theta} \\
& \bullet \ar[dl]^-{\eta}\ar@{.}[d]^-{h-3} \ar@{~}[dr]^-{t}\\
& &
}
\]
\end{tabular}
\end{center}

\begin{center}
\begin{tabular}{ >{\centering}m{6cm} >{\centering}m{6cm} }
\textbf{(C.3)${}^{Y,\rho}_{h \le 2}$ $+3$}
\vspace{0.1cm}
\[
\xymatrix@C+2pc@R+1.5pc{
& \ar@{.}[d]^-{h} & \ar@{=}[dl]^-{\theta} \\
& \bullet \ar@{.}[d]^-{h+1}\\
\ar[ur]^-{\eta} & &
}
\]
&
\textbf{(C.3)${}^{Y,\rho}_{h > 2}$ $+3$}
\vspace{0.1cm}
\[
\xymatrix@C+2pc@R+1.5pc{
& \ar@{.}[d]^-{h} & \ar@{=}[dl]^-{\theta} \\
& \bullet \ar@{.}[d]^-{h-3} \ar@{~}[dr]^-{t}\\
\ar[ur]^-{\eta} & &
}
\]
\end{tabular}
\end{center}

Suppose we want to evaluate the forward suspended product
\[
\rho_3: \HH(X,X)[1] \otimes \HH(X,Y)[1] \otimes \HH(Y,Y)[1] \lto \HH(X,Y)[1]
\]
on an input tensor
\[
x^2 \xi \otimes x \eta \Bar{\xi} \otimes x^3 \Bar{\eta} = z_2 \xi \otimes z_1 \eta \Bar{\xi} \otimes z_3 \Bar{\eta}\,.
\]
We examine one Feynman diagram contributed by our standard tree $T$ of Figure \ref{eq:explicit_tree_operator}. We are in the situation examined in the proof of Lemma \ref{prop:replacer2}, with
\[
\rho_T( z_2 \xi, z_1 \eta \Bar{\xi}, z_3 \Bar{\eta} ) = \pi e^{-\delta} \mu_2\Big\{ e^{\delta} \phi_\infty e^{-\delta} \mu_2\Big( e^\delta \sigma_\infty( z_3 \Bar{\eta} ) \otimes e^\delta \sigma_\infty( z_1 \eta \Bar{\xi} ) \Big) \otimes e^\delta\sigma_\infty( z_2 \xi ) \Big\}\,.
\]
Next we expand the exponentials and $\phi_\infty, \sigma_\infty$, among the summands is
\begin{align*}
\pi \mu_2\Big\{ \delta \zeta \nabla (-\delta) \mu_2\Big( z_3 \Bar{\eta} \otimes (-\zeta \vAt_{\AA})( z_1\eta \Bar{\xi} ) \Big) \otimes \delta (-\zeta \vAt_{\AA})( z_2 \xi ) \Big\}\,.
\end{align*}
If we now further expand $\delta, \vAt_{\AA}, \nabla$ among the summands is
\begin{gather*}
- \pi \mu_2\Big\{ [ 3 z_1 \eta^* \theta^* ] \zeta \theta \partial_t [ \tfrac{2}{5} \eta t z_3^* \theta^* ] \mu_2\Big( z_3 \Bar{\eta} \otimes \zeta [\tfrac{1}{5} \theta z_1^* \Bar{\xi}^*] z_1 \eta \Bar{\xi}\Big) \otimes [2z_1 \Bar{\xi} \theta^*] \zeta [\theta \xi^* z_2^*] z_2 \xi \Big\}\\
= - \tfrac{3 \cdot 2 \cdot 1 \cdot 2}{5^2} \pi \mu_2\Big\{ [ z_1 \eta^* \theta^* ] \zeta \theta \partial_t [ \eta t z_3^* \theta^* ] \mu_2\Big( z_3 \Bar{\eta} \otimes [ \theta z_1^* \Bar{\xi}^*] z_1 \eta \Bar{\xi}\Big) \otimes [z_1 \Bar{\xi} \theta^*] [\theta \xi^* z_2^*] z_2 \xi \Big\}
\end{gather*}
which are, reading from left to right, the vertices $(\textup{C}.1)^\nu_{h=0}, (B), (\textup{C}.2)^{\nu}_{h=3}, (\textup{A}.4)^\nu_{h=1}, (\textup{C}.3)^{X,\rho}_{h=0}$ and $(\textup{A}.1)^{X,\rho}_{h=2}$. Next we replace all occurrences of $\mu_2$ according to Section \ref{section:feynman_diagram_3}. Among the summands are (we omit the boundary condition operators $P$ for legibility)
\begin{gather*}
- \tfrac{3 \cdot 2 \cdot 1 \cdot 2}{5^2} \pi m\Big\{ [ z_1 \eta^* \theta^* ] \zeta \theta \partial_t [ \eta t z_3^* \theta^* ] m ( \eta^* \Bar{\eta}^* ) \Big( z_3 \Bar{\eta} \otimes [ \theta z_1^* \Bar{\xi}^*] z_1 \eta \Bar{\xi}\Big) \otimes [z_1 \Bar{\xi} \theta^*] [\theta \xi^* z_2^*] z_2 \xi \Big\}\,.
\end{gather*}
Now we commute the leftmost fermionic annihilation operator $\eta^*$ to the right. The only nonzero contribution is when this pairs with the next $\eta$. The leftmost $\theta^*$ has to annihilate with the closest $\theta$, the $\partial_t$ has to annihilate with the $t$, and so on. There is only one pattern of contractions which has a nonzero coefficient, and the pairings of fermionic creation and annihilation operators is shown in the diagram
\begin{center}
\includegraphics[scale=0.45]{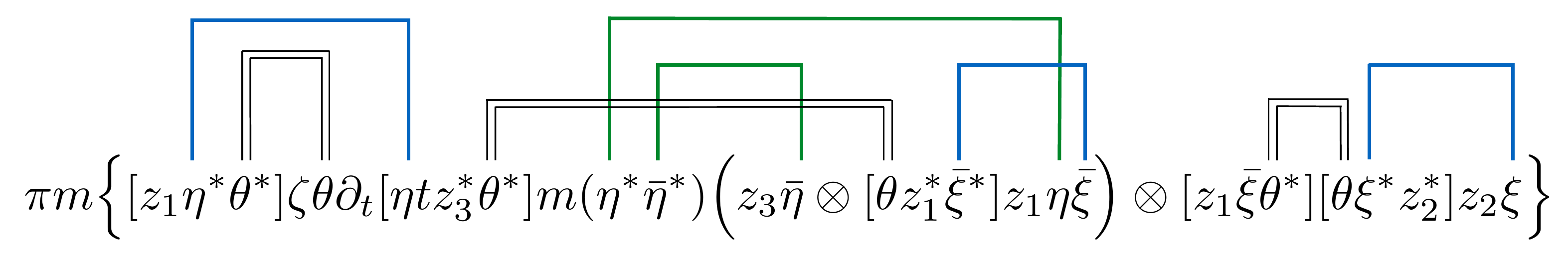}
\end{center}
The corresponding Feynman diagram $F$ with outgoing state $\tau = z_2 \Bar{\xi}$ is shown in Figure \ref{fig:feynman_1}. If we had taken $\tau$ as our outgoing state in the Feynman rules, then this Feynman diagram $F$ would be one of the contributors to $C_{\tau}$ and its contribution is $C_{\tau, F} = - \tfrac{12}{25}$.

\begin{figure}
\begin{center}
\includegraphics[scale=1.0]{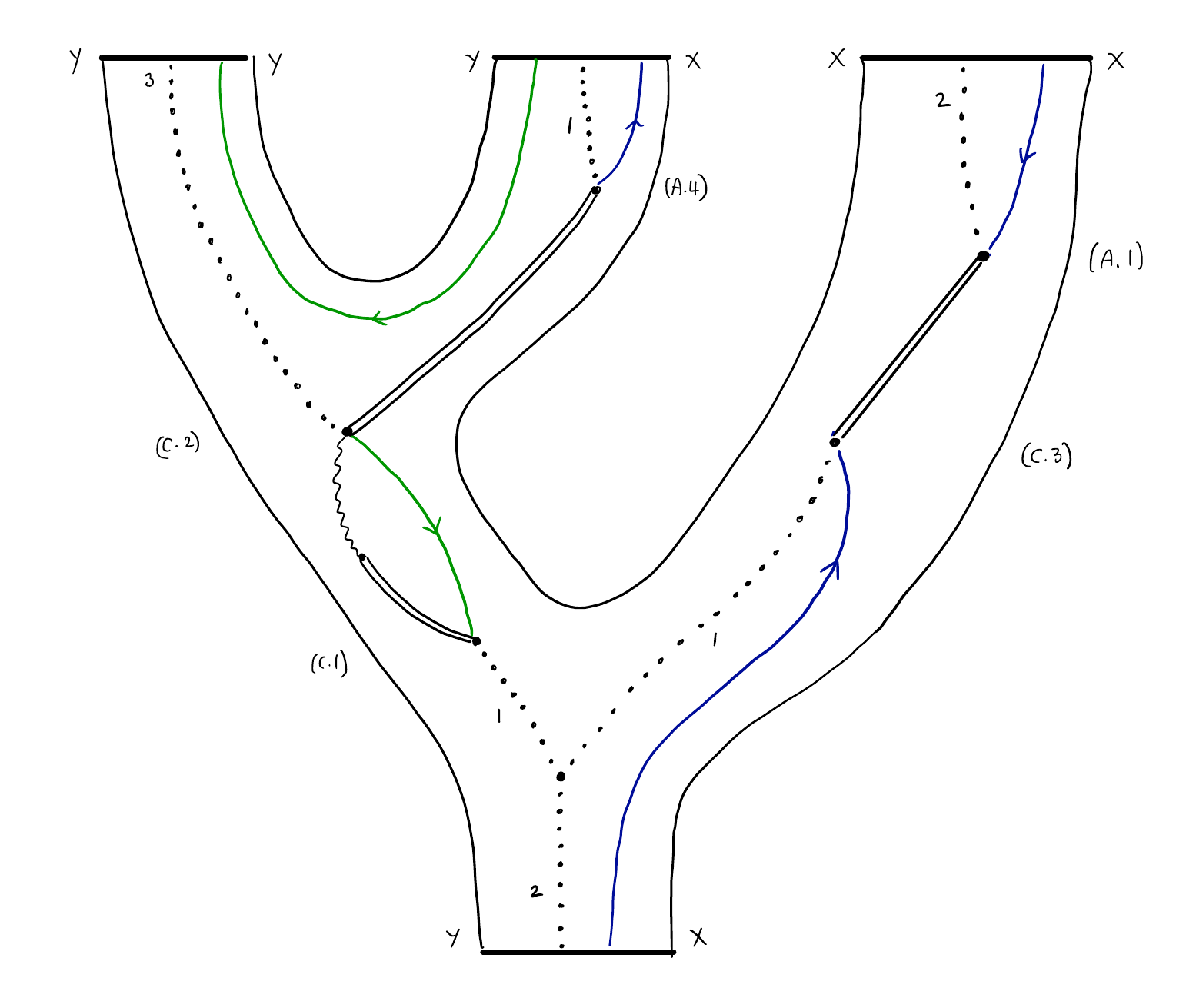}
\end{center}
\centering
\caption{Example of a Feynman diagram, where blue lines denote $\eta$ and green lines $\xi$.}\label{fig:feynman_1}
\end{figure}
\end{example}

\section{The stabilised residue field}\label{section:generator}

In this section we sketch how our approach recovers the usual $A_\infty$-minimal model \cite{seidel_hms, d0904.4713, efimov, sheridan} of $\AA(k^{\stab},k^{\stab})$ when $k$ is a field, as an illustration of an example where $E$ may be split by hand. Let $k$ be a characteristic zero field, $W \in k[x_1,\ldots,x_n]$ a potential and $\AA$ the DG-category with the single object
\[
k^{\stab} = \big( \bigwedge F_\xi \otimes R, \sum_{i=1}^n x_i \xi_i^* + \sum_{i=1}^n W^i \xi_i \big)
\]
for some chosen decomposition $W = \sum_{i=1}^n x_i W^i$ where $F_\xi = \bigoplus_{i=1}^n k \xi_i$. Hence
\be
\AA(k^{\stab}, k^{\stab}) = \Big( \End_k\big(\bigwedge F_\xi\big) \otimes R, \sum_{i=1}^n x_i [\xi_i^*,-] + \sum_{i=1}^n W^i [\xi_i,-] \Big)\,.
\ee
This Koszul matrix factorisation is a classical generator of the homotopy category of matrix factorisations over $k\llbracket \bold{x} \rrbracket$ as was shown first by Schoutens in the setting of maximal Cohen-Macaulay modules \cite{schoutens} and then rediscovered by Orlov \cite{orlov} (see \cite[Lemma 12.1]{seidel_hms}) Dyckerhoff \cite[Corollary 5.3]{d0904.4713} and others \cite[Proposition A.2]{keller}. In Setup \ref{setup:overall} we take:
\begin{itemize}
\item $\bold{t} = (x_1,\ldots,x_n)$ so $R/I = k$.
\item $\lambda = \xi_i$ meaning as usual $\xi_i \wedge (-)$.
\item $\sigma: k \lto R$ is the inclusion of scalars, and $\nabla = \sum_{i=1}^n \partial_{x_i} \theta_i$.
\end{itemize}
It can be shown that
\[
e^{-\delta} r_2 = r_2 e^{\Xi} ( e^{-\delta} \otimes e^{-\delta} )
\]
where $\Xi = \sum_{i=1}^n \theta_i^* \otimes [\xi_i, -]$, and hence in the operator decorated trees computing the higher operations we can remove all the occurrences of $e^{\delta}, e^{-\delta}$ at the cost of replacing $r_2$ at each internal vertex by $r_2 e^{\Xi}$. Under the isomorphism $\rho$ of Lemma \ref{lemma:iso_rho}
\be\label{eq:rho_now}
\xymatrix@C+2pc{
\bigwedge F_\xi \otimes \bigwedge F_{\Bar{\xi}} \otimes R \ar[r]^-{\rho}_-{\cong} & \End_k\big( \bigwedge F_\xi \big) \otimes R
}
\ee
the differential $d_{\AA}$ on the right hand corresponds on the left hand side to
\be\label{eq:daa_stab}
d^{\rho}_{\AA} = \sum_{i=1}^n x_i \xi_i^* + \sum_{i=1}^n W^i \Bar{\xi}_i^*
\ee
by Lemma \ref{lemma:commutators_on_rho}. Moreover under $\rho$ the operator $\Xi$ corresponds to $\theta_i^* \otimes \Bar{\xi}_i^*$, so $e^{\Xi}$ is a family of interaction vertices that allows a downward travelling $\theta_i$ on the left branch to convert into an upward travelling $\xi$ on the right. We identify $\BB(k^{\stab}, k^{\stab})$ with $\bigwedge F_\xi \otimes \bigwedge F_{\Bar{\xi}}$ with zero differential. The Atiyah classes $\At_i$ as operators on $\BB$ are
\[
\gamma_j = \At_j = [d_{\AA}^{\rho},\partial_{x_j}] = -\xi_j^* - \sum_{i=1}^n \partial_{x_j}(W^i)\Big|_{\bold{x} = \bold{0}} \Bar{\xi}_i^*\,.
\]
Together with $\gamma_j^\dagger = - \xi_j$ these Atiyah classes form a representation of the Clifford algebra, which determines a subspace $F_W$ such that 
\[
\operatorname{Im}(E_1) = \bigcap_{i=1}^n \Ker(\gamma_i) = \bigwedge F_W \subseteq \bigwedge F_\xi \otimes \bigwedge F_{\Bar{\xi}}\,.
\]
For example, if $W \in (x_1,\ldots,x_n)^3$ so that $\gamma_j = - \xi_j^*$ then $\bigwedge F_W = \bigwedge F_{\Bar{\xi}}$. This subspace is closed under the higher operations on $\BB$ since neither $\vAt_{\AA}$ nor $\Xi$ can introduce a $\xi_i$. It follows that $\bigwedge F_{\Bar{\xi}}$ equipped with the restricted operations is a minimal $A_\infty$-category which splits the idempotent $E$.

\appendix

\section{Formal tubular neighborhoods}\label{section:formaltub}

In this section we introduce the geometric content of the strong deformation retract which forms the basis of this paper, based on the idea of a formal tubular neighborhood \cite{cuntzquillen, lipman}. We begin with a brief introduction to quasi-regular sequences, for more on which see \cite[\S 15.B]{matsumura}, \cite[Chapitre $0$ \S 15.1]{EGA4} and \cite[Section\,10.68]{stacks_project}.

\begin{definition} A sequence $t_1,\ldots,t_n$ in a commutative ring $R$ is \emph{quasi-regular} if, writing $I = (t_1,\ldots,t_n)$, the morphism of $R/I$-algebras
\begin{gather*}
\phi: R/I[z_1,\ldots,z_n] \lto \operatorname{gr}_I R = \bigoplus_{i \ge 0} I^i / I^{i+1}\\
\phi(z_i) = \overline{t_i} \in I/I^2
\end{gather*}
is an isomorphism. In particular, this means that $I/I^2 \cong \bigoplus_{i=1}^n R/I \cdot \overline{t_i}$.
\end{definition}

We recall the motivation for this definition from algebraic geometry.

\begin{remark} Let $Y$ be a Noetherian scheme and let $i: X \lto Y$ be a closed subscheme with ideal sheaf $\mathscr{I}$. The first-order deformations of $X$ in $Y$ are controlled \cite[Theorem VI-29]{eisenbudharris} by the normal sheaf $\mathscr{N}_{X/Y} = \Hom_{\cat{O}_X}(\mathscr{I}/\mathscr{I}^2, \cat{O}_X)$.

In general, the full description of the space of normal directions to $X$ in $Y$ requires more than just the normal sheaf. The correct approach is to start with the blowup $\pi$ of $Y$ along $X$ (which is described by a universal property) and then look at the closed subscheme of that blowup induced by $X$, as in the diagram:
\[
\xymatrix@C+2pc{
\operatorname{Proj}_X( \oplus_{i \ge 0} \mathscr{I}^i/\mathscr{I}^{i+1} ) \ar[r]^-j\ar[d] & \operatorname{Proj}_Y( \oplus_{i \ge 0} \mathscr{I} ) \ar[d]^-{\pi}\\
X \ar[r]_-{i} & Y\,.
}
\]
The closed subscheme $j$ of the blowup (called the strict transform of $X$) is given by the relative Proj of the sheaf of graded algebras $\oplus_{i \ge 0} \mathscr{I}^i/\mathscr{I}^{i+1}$ on $X$, and its points give the correct notion of a point on $X$ together with a normal direction in $Y$. For this reason the scheme $C_X(Y) = \Spec_X( \oplus_{i \ge 0} \mathscr{I}^i / \mathscr{I}^{i+1} )$, whose projectivisation is the strict transform of $X$ in the blowup, is called the \emph{normal cone} of $X$ in $Y$.
\end{remark}

Thus, to say that a sequence $t_1,\ldots,t_n$ is quasi-regular is to say that the projectivised normal cone of $X = \Spec(R/I)$ in $Y = \Spec(R)$ is the space of lines in the normal bundle, since the definition of quasi-regularity gives 
\begin{align*}
\Spec_{Y}( \operatorname{Sym}( \mathscr{N}^*_{X/Y} ) ) &\cong \Spec( \operatorname{Sym}_{R/I}( I/I^2 ) )\\
&= \Spec( R/I[z_1,\ldots,z_n] )\\
&\cong \Spec( \operatorname{gr}_ I R)\\
&= C_X(Y)\,.
\end{align*}
Recall that in differential geometry if we are given a submanifold $X$ of a smooth manifold $Y$ the tubular neighborhood theorem \cite[\S 4.5]{hirsch} identifies an open neighborhood of the zero section of the normal bundle $N_{X/Y} \lto X$ with an open neighborhood of $X$ in $Y$. 

There is no direct analogue of this identification of neighborhoods for a general closed immersion $i: X \lto Y$ of schemes. Let us discuss what such an identification would mean at the level of \emph{formal} neighborhoods in the affine case, assuming that $I = (t_1,\ldots,t_n)$ is generated by a quasi-regular sequence. Completing the normal cone $C_X(Y)$ along the zero section means completing $R/I[z_1,\ldots,z_n]$ in the $(z_1,\ldots,z_n)$-adic topology, while completing $Y$ along $X$ means taking the $I$-adic completion of $R$, so that such an identification would amount to an isomorphism of topological rings
\be\label{eq:formaltubular}
R/I\llbracket z_1,\ldots,z_n \rrbracket \lto \widehat{R}
\ee
where $\widehat{R}$ denotes the $I$-adic completion. It easy to produce examples where this fails:

\begin{example}\label{eq:example_nontub} Let $k$ be a field, $R = k[x]$ and $I = (x^d)$ for $d > 1$. Since the $I$-adic topology is the same as the $(x)$-adic topology, $R/I\llbracket z \rrbracket = k[x]/(x^d) \llbracket z \rrbracket \neq k\llbracket x \rrbracket = \widehat{R}$ since one ring is reduced while the other is not.
\end{example}

However, if $R/I$ is smooth \cite[Definition 28.D]{matsumura}, so that we are in a situation more resembling the one in differential geometry, there is indeed an isomorphism of topological rings of the form \eqref{eq:formaltubular} (see e.g. Lemma \ref{prop_algtube} below). This \emph{formal tubular neighborhood theorem} was developed in the noncommutative setting by Cuntz-Quillen \cite[Theorem 2]{cuntzquillen}. 

In our applications, however, Example \ref{eq:example_nontub} is more typical, as $I$ is generated by the partial derivatives of a potential $W$ and the critical locus $R/I$ of a potential is rarely smooth. While in general there is no strong analogue of the tubular neighborhood theorem, Lipman proves in \cite{lipman} that the cases we care about, there is still an isomorphism of $k\llbracket z_1,\ldots,z_n \rrbracket$-modules, and this is enough to produce connections.

\begin{setup}\label{setup:connection_t} For the rest of this section let $k$ be a commutative $\mathbb{Q}$-algebra, $R$ a $k$-algebra, and let $t_1,\ldots,t_n$ be a quasi-regular sequence in $R$ such that, writing $I = (t_1,\ldots,t_n)$, the quotient $R/I$ is a finitely generated projective $k$-module.
\end{setup}

\begin{lemma}\label{prop_algtube} Any $k$-linear section $\sigma$ of the quotient $\pi: R \lto R/I$ induces an isomorphism of $k\llbracket z_1,\ldots,z_n \rrbracket$-modules
\[
\sigmastar: R/I \otimes k\llbracket z_1,\ldots,z_n \rrbracket \lto \widehat{R}
\]
where $\widehat{R}$ denotes the $I$-adic completion. If further $R/I$ is smooth over $k$ then there exists a section $\sigma$ such that $\sigmastar$ is an isomorphism of topological $k$-algebras.
\end{lemma}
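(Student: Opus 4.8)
The plan is to prove both assertions by an associated-graded argument with respect to the two relevant adic filtrations. First I would make $\sigmastar$ explicit: on the dense subring $R/I \otimes_k k[z_1,\ldots,z_n]$ define $\sigmastar(\bar r \otimes z^\alpha) = \sigma(\bar r)\, t^{\alpha}$, with $t^\alpha = t_1^{\alpha_1}\cdots t_n^{\alpha_n}$, extended $k$-linearly; this is visibly $k[z_1,\ldots,z_n]$-linear. Since $z^\alpha$ with $|\alpha|\ge m$ is carried into $I^{m}\widehat{R}$, the map is continuous for the $(z)$-adic topology on the source and the $I$-adic topology on $\widehat R$, hence extends uniquely to a continuous $k\llbracket z_1,\ldots,z_n\rrbracket$-linear map on the completions. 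Because $R/I$ is finitely generated projective over $k$ — so a direct summand of some $k^N$, and flat over $k$ — the $(z)$-adic completion of $R/I \otimes_k k[z_1,\ldots,z_n]$ is exactly $R/I \otimes_k k\llbracket z_1,\ldots,z_n\rrbracket$, which is therefore $(z)$-adically complete and separated (a filtered direct summand of $k\llbracket z_1,\ldots,z_n\rrbracket^N$), and $\operatorname{gr}_{(z)}\!\big(R/I \otimes_k k\llbracket z_1,\ldots,z_n\rrbracket\big) \cong (R/I)[z_1,\ldots,z_n]$.

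For the first assertion I would then compute the map induced by $\sigmastar$ on associated gradeds. Both sides are complete separated filtered $k$-modules and $\sigmastar$ is filtered, and the induced map
\[
(R/I)[z_1,\ldots,z_n] \;=\; \operatorname{gr}_{(z)}\!\big(R/I \otimes_k k\llbracket z_1,\ldots,z_n\rrbracket\big) \;\longrightarrow\; \operatorname{gr}_I \widehat{R} \;=\; \operatorname{gr}_I R
\]
is the $R/I$-algebra homomorphism sending $z_i$ to the class of $t_i$ in $I/I^2$ — that is, it is precisely the map $\phi$ appearing in the definition of a quasi-regular sequence, hence an isomorphism by hypothesis. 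By the standard fact that a filtered morphism of complete separated filtered modules inducing an isomorphism on associated gradeds is itself an isomorphism with filtered inverse, $\sigmastar$ is an isomorphism of topological $k\llbracket z_1,\ldots,z_n\rrbracket$-modules.

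For the second assertion I would use that a smooth $k$-algebra of finite presentation — which $R/I$ is — is formally smooth. Starting from $\operatorname{id}\colon R/I \to \widehat{R}/I\widehat{R} = R/I$, I would construct inductively a compatible family of $k$-algebra homomorphisms $s_m\colon R/I \to R/I^m$ lifting the identity: the kernel of $R/I^{m+1} \to R/I^m$ is $I^m/I^{m+1}$, which is a square-zero ideal for $m \ge 1$ since $I^{2m}\subseteq I^{m+1}$, so formal smoothness produces a lift $s_{m+1}$ of $s_m$. Passing to the inverse limit yields a $k$-algebra homomorphism $s\colon R/I \to \widehat{R}$ splitting $\widehat{R}\to R/I$. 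Taking $s$ as the section and defining $\sigmastar$ by the same formula with $s$ in place of $\sigma$, the argument of the first part applies verbatim (it used only that $s$ is a $k$-linear splitting), so $\sigmastar$ is a $k\llbracket z_1,\ldots,z_n\rrbracket$-module isomorphism; moreover it is a ring homomorphism, since its restriction to $R/I \otimes 1$ equals $s$ and hence is multiplicative, $\sigmastar(z_i)=t_i$, and $\sigmastar$ is $k\llbracket z_1,\ldots,z_n\rrbracket$-linear and continuous, so multiplicativity propagates from the dense subring $R/I\otimes_k k[z_1,\ldots,z_n]$. As its inverse is also filtered, $\sigmastar$ is an isomorphism of topological $k$-algebras.

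The only genuinely nontrivial input — and the place I expect the main difficulty — is the second part: the passage from smoothness of $R/I$ over $k$ in the sense of \cite[Definition 28.D]{matsumura} to the formal lifting property used above, where the finite presentation of $R/I$ over $k$ is essential; alternatively one may simply invoke the formal tubular neighborhood theorem of Cuntz--Quillen \cite[Theorem 2]{cuntzquillen}. Everything else is bookkeeping with the $(z)$-adic and $I$-adic filtrations.
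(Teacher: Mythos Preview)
Your proof is correct and takes a somewhat different route from the paper's. The paper cites \cite[Lemma 3.3.2]{lipman} and then gives an explicit iterative construction of the inverse of $\sigmastar$: given $r \in \widehat R$, write $r = \sigma(\pi r) + \sum_i a_i t_i$, apply the same step to each $a_i$, and iterate to obtain the series $r = \sum_M \sigma(r_M) t^M$; uniqueness of this expansion is asserted as ``a consequence of quasi-regularity'' without further argument. Your associated-graded argument is cleaner and more self-contained: it handles injectivity and surjectivity in one stroke, and it makes transparent that the only input needed is precisely the isomorphism $\phi: (R/I)[z_1,\ldots,z_n] \to \operatorname{gr}_I R$ defining quasi-regularity. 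What the paper's approach buys is an explicit recursive description of the coefficients $r_M$, which is exactly what the rest of the paper exploits algorithmically (via iterated Gr\"obner division) to compute the operators $r^\#$ and $\partial_{t_i}$. For the smooth case both arguments do the same thing --- lift the identity along the square-zero extensions $R/I^{m+1} \to R/I^m$ using formal smoothness to obtain an algebra section into $\widehat R$ --- though you spell out the lifting where the paper simply says ``by a standard argument''.
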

\begin{proof}
This is \cite[Lemma 3.3.2]{lipman}. The map $\sigmastar$ is induced from $\sigma$ by extension of scalars, where $k\llbracket \bold{z} \rrbracket$ acts on $\widehat{R}$ by making $z_i$ act as multiplication by $t_i$. The main point is that every $r \in \widehat{R}$ has a \emph{unique} representation as a power series
\be\label{eq:uniq_decompr}
r = \sum_{M \in \mathbb{N}^n} \sigma(r_M) t^M
\ee
for elements $r_M \in R/I$, and the inverse to $\sigmastar$ sends $r$ to $\sum_M r_M z^M$. The uniqueness of \eqref{eq:uniq_decompr} is a consequence of quasi-regularity, while the existence is proven as follows: given $r \in R$ we have $\pi( r - \sigma(r) ) = 0$ and so there exist $a_1,\ldots,a_n$ with
\be\label{eq:uniq_decompr_choices}
r = \sigma(r) + \sum_{i=1}^n a_i t_i\,.
\ee
Applying the same argument to each $a_i$ yields 
\begin{align*}
r &= \sigma(r) + \sum_{i=1}^n \big[ \sigma(a_i) + \sum_{j=1}^n a_{ij} t_j \big] t_i\\
&= \sigma(r) + \sum_{i=1}^n \sigma(a_i) t_i + \sum_{i,j=1}^n a_{ij} t_i t_j\,.
\end{align*}
This process converges in the $I$-adic topology to a series \eqref{eq:uniq_decompr}. If $R/I$ is smooth then by a standard argument \cite{matsumura} we can produce a section $\sigma: R/I \lto \widehat{R}$ which is a morphism of $k$-algebras. It is clear then that $\sigmastar$ is an isomorphism of topological algebras.
\end{proof}

Recall from \cite[\S 8.1.1]{loday} the notion of a connection on a module. As alluded to above, we can use the isomorphism of Lemma \ref{prop_algtube} to produce connections. For a more complete discussion on this point, see \cite[Appendix B]{pushforward}.

\begin{corollary}\label{corollary:nabla_sigma} Associated to any $k$-linear section $\sigma$, there is a $k$-linear connection on $\widehat{R}$ as a $k[\bold{z}]$-module, where $z_i$ acts as multiplication by $t_i$:
\be
\nabla_\sigma: \widehat{R} \lto \widehat{R} \otimes_{k[\bold{z}]} \Omega^1_{k[\bold{z}]/k}\,.
\ee
\end{corollary}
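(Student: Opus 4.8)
The plan is to transport the trivial (``de Rham in the $z$ directions'') connection along the isomorphism $\sigmastar$ of Lemma \ref{prop_algtube}, so that essentially all the content of the corollary is already contained in that lemma. First I would fix notation: write $S = R/I \otimes k\llbracket \bold{z} \rrbracket$ and view it as a module over the polynomial ring $k[\bold{z}]$ by restriction of scalars along $k[\bold{z}] \hookrightarrow k\llbracket \bold{z} \rrbracket$, so that $z_i$ acts by multiplication by $z_i$. Since $\Omega^1_{k[\bold{z}]/k}$ is free on $dz_1,\ldots,dz_n$ over $k[\bold{z}]$, the target $S \otimes_{k[\bold{z}]} \Omega^1_{k[\bold{z}]/k}$ is just $\bigoplus_{i=1}^n S\, dz_i$, and I would define the candidate connection $d_{\bold{z}}$ on $S$ by $d_{\bold{z}}(a \otimes f) = \sum_{i=1}^n (a \otimes \partial_{z_i} f)\, dz_i$, where $\partial_{z_i}$ is coefficient-wise differentiation of power series. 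The two things to check are routine: that $\partial_{z_i}$ is well-defined and $k$-linear on $k\llbracket \bold{z}\rrbracket$ (it acts coefficient-wise), and that the Leibniz rule $\partial_{z_i}(gf) = \partial_{z_i}(g) f + g\, \partial_{z_i} f$ for $g \in k[\bold{z}]$ upgrades $d_{\bold{z}}$ to a connection in the sense of \cite[\S 8.1.1]{loday}.

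Next I would invoke Lemma \ref{prop_algtube}: the map $\sigmastar \colon S \to \widehat{R}$ is an isomorphism of $k\llbracket\bold{z}\rrbracket$-modules, hence a fortiori an isomorphism of $k[\bold{z}]$-modules once $\widehat{R}$ is equipped with the $k[\bold{z}]$-structure in which $z_i$ acts as multiplication by $t_i$ --- exactly the module structure named in the statement. I would then set $\nabla_\sigma = (\sigmastar \otimes 1) \circ d_{\bold{z}} \circ (\sigmastar)^{-1} \colon \widehat{R} \to \widehat{R} \otimes_{k[\bold{z}]} \Omega^1_{k[\bold{z}]/k}$, observe that it is $k$-linear as a composite of $k$-linear maps, and note that because $\sigmastar$ is $k[\bold{z}]$-linear it intertwines the two module structures, so the Leibniz identity for $d_{\bold{z}}$ transfers verbatim to $\nabla_\sigma$. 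That produces the desired $k$-linear connection.

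There is no real obstacle here: the nontrivial input --- the existence of a $k\llbracket\bold{z}\rrbracket$-linear splitting of $\widehat{R}$, which fails for a general closed immersion and which required Lipman's formal tubular neighborhood theorem --- is already packaged into Lemma \ref{prop_algtube}. The only points demanding (minor) care are bookkeeping ones: keeping straight the restriction of scalars $k[\bold{z}] \hookrightarrow k\llbracket \bold{z}\rrbracket$ so that $\Omega^1_{k[\bold{z}]/k}$, rather than some completed version, really is the natural target, and checking that differentiation of power series respects the relevant algebraic structures. Everything else is formal.
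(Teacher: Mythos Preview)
Your proposal is correct and follows essentially the same route as the paper: define the standard partial-derivative connection on $R/I \otimes k\llbracket \bold{z} \rrbracket$ and transport it to $\widehat{R}$ via the $k\llbracket \bold{z}\rrbracket$-linear isomorphism $\sigmastar$ of Lemma~\ref{prop_algtube}. The paper phrases this as extending the connection on $k\llbracket \bold{z}\rrbracket$ along the isomorphism (citing \cite[\S 8.1.3]{loday}) and records the explicit formula $\nabla_\sigma(r) = \sum_{j,M} M_j\,\sigma(r_M)\,t^{M-e_j}\otimes dz_j$, but the underlying argument is identical to yours.
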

\begin{proof}
The usual partial derivatives give a $k$-linear connection on $k\llbracket \bold{z} \rrbracket$ as a $k[\bold{z}]$-module which extends, by Lemma \ref{prop_algtube}, to a connection on $\widehat{R}$, see \cite[\S 8.1.3]{loday}. In terms of the power series representation \eqref{eq:uniq_decompr} the connection is given by
\be\label{eq:explicit_connection}
\nabla_\sigma(r) = \sum_{j=1}^n \sum_{M \in \mathbb{N}^n} M_j \sigma(r_M) t^{M - e_j} \otimes dz_j
\ee
which completes the proof.
\end{proof}

\begin{definition}
With a section $\sigma$ fixed, we will write $\nabla$ for the associated connection $\nabla_\sigma$. We also introduce $k$-linear operators $\frac{\partial}{\partial z_i}: \widehat{R} \lto \widehat{R}$ by the identity $\nabla = \sum_{j=1}^n \frac{\partial}{\partial z_i} dz_j$. The operators $\frac{\partial}{\partial z_i}$ depend on the choice of section $\sigma$, but we will abuse notation and write
\[
\partial_{t_i} := \frac{\partial}{\partial z_i}: \widehat{R} \lto \widehat{R}\,.
\]
\end{definition}

\begin{example}\label{example:dt_xd} Let $k$ be a field, $R = k[x]$ and $I = (x^d)$, with $t = x^d$. Choose the $k$-linear section $\sigma: R/I \lto R$ defined by $\sigma(x^i) = x^i$ for $0 \le i \le d - 1$. Then
\[
\partial_t( x^2 + x^{d+1} ) = \partial_t( x^2 \cdot 1 + x \cdot x^d ) = x\,.
\]
\end{example}

In an important class of examples there is an explicit algorithm for computing $r_M$ and thus the derivatives $\partial_{t_i}(r)$ for any element $r \in R$.

\begin{remark}\label{remark:grobner} Suppose that $k$ is a characteristic zero field, and $R = k[x_1,\ldots,x_n]$. Choose a monomial ordering for $R$ and let $G = (g_1,\ldots,g_c)$ be a Gr\"obner basis for $I$, which may be computed from the polynomials $t_1,\ldots,t_n$ by Buchberger's algorithm \cite[\S 2.7]{cox_little_oshea}. Moreover this algorithm also computes an expression $g_i = \sum_{j=1}^n h_{ij} t_j$ for $1 \le i \le c$.

Following the notation of \cite{cox_little_oshea} we write $\Bar{r}^G$ for the remainder on division of $r$ by $G$. This is the unique polynomial with no term divisible by any of the leading terms of the $g_i$, and for which there exists $h \in I$ with $r = h + \Bar{r}^G$ \cite[\S 2.6]{cox_little_oshea}. The generalised Euclidean division algorithm produces $\Bar{r}^G$ together with polynomials $b_1,\ldots,b_c$ such that
\be\label{eq:uniq_decompr_2}
r = \Bar{r}^G + \sum_{i=1}^c b_i g_i = \Bar{r}^G + \sum_{j=1}^n \Big[ \sum_{i=1}^c b_i h_{ij} \Big] t_j\,.
\ee
It is easy to check that the function
\[
\sigma: R/I \lto R\,, \qquad \sigma(r) = \Bar{r}^G
\]
is well-defined and is a $k$-linear section of $\pi$ \cite[Corollary 2, \S 2.6]{cox_little_oshea}. This means that \eqref{eq:uniq_decompr_2} is the desired expression in \eqref{eq:uniq_decompr_choices} used to generate the $r_M$'s. So for example in the case $M = \bold{0} \in \mathbb{N}^n$ we take $r_{\bold{0}} = \Bar{r}^G$ and for $1 \le j \le n$ in the case $M = e_j$ we have
\[
r_{e_j} = \sum_{i=1}^c \overline{b_i h_{ij} }^G\,.
\]
Observe that the monomials $x^\alpha$ with $\alpha$ not divisible by any of the leading terms of the $g_i$ give a $k$-basis of $R/I$. Denote the set of such tuples $\alpha$ by $\Lambda$. Then $R/I = \oplus_{\alpha \in \Lambda} k x^\alpha$ and $\sigma(x^\alpha) = x^\alpha$, generalising Example \ref{example:dt_xd}. 

What we have already said computes the coefficients of $r_M$ in the basis $\{x^\alpha\}_{\alpha \in \Lambda}$ in the cases where $|M| \le 1$. For the cases where $|M| = 2$ we simply have to run the division algorithm with $\sum_{i=1}^c b_i h_{ij}$ in place of $r$ in \eqref{eq:uniq_decompr_2}. Proceeding in this way constitutes an algorithm for computing the coefficient of $x^\alpha$ in $r_M$ for any $M \in \mathbb{N}^n$.
\end{remark}

The upshot is that when $k$ is a field, the map
\[
(\sigma_{\bold{t}})^{-1}: \widehat{R} \lto R/I \otimes k \llbracket \bold{t} \rrbracket
\]
sends $r \in R$ to a power series $\sum_{M \in \mathbb{N}^n} r_M t^M$ whose coefficients $r_M$ are obtained by \emph{iterated Euclidean division} of $r$ by a Gr\"obner basis of $I$. When $k$ is not a field, it is not obvious how to calculate the $r_M$ algorithmically.

\subsection{Analogy to the Euler field}\label{section:more_on_connections}

In this section we are in the situation of Setup \ref{setup:connection_t}. The Koszul complex of the sequence $t_1,\ldots,t_n$ over $\widehat{R}$ is $(K, d_K) = \big( \bigwedge( k \theta_1 \oplus \cdots \oplus k \theta_n ) \otimes \widehat{R}, \,\,\sum_i t_i \theta_i^* \big)$. By identifying $\theta_i$ with $dz_i$ we identify $K$ with $\widehat{R} \otimes_{k[\bold{z}]} \Omega^*_{k[\bold{z}]/k}$. For the remainder of the section we fix a section $\sigma$ and associated connection $\nabla = \nabla_\sigma$. We have the following $k$-linear operator on $\widehat{R}$:
\be\label{eq:explicitdknabla}
d_K \nabla_\sigma = \sum_i t_i \frac{\partial}{\partial t_i}\,.
\ee
Using \eqref{eq:explicit_connection} we have explicitly $d_K \nabla_\sigma(r) = \sum_{M \in \mathbb{N}^n} |M| \sigma(r_M) t^M$ where $|M| = \sum_{j=1}^n M_j$.

\begin{lemma} If $R/I$ is smooth, $\sigma$ may be chosen such that $d_K \nabla_\sigma$ is a derivation of $\widehat{R}$.
\end{lemma}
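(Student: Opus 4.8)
The plan is to choose the section $\sigma$ so that the formal tubular neighbourhood map $\sigmastar$ is an isomorphism of \emph{topological $k$-algebras}, and then to recognise $d_K\nabla_\sigma$ as the Euler vector field on a power series ring transported along this isomorphism; derivations are preserved under conjugation by an algebra isomorphism, so the claim follows.

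First I would invoke Lemma \ref{prop_algtube}: since $R/I$ is smooth over $k$, there is a $k$-linear section $\sigma$ such that
\[
\sigmastar\colon R/I \otimes k\llbracket \bold{z} \rrbracket \lto \widehat{R}
\]
is an isomorphism of topological $k$-algebras, where $z_i$ acts on $\widehat{R}$ as multiplication by $t_i$. Fix this $\sigma$, with associated connection $\nabla = \nabla_\sigma$ and operators $\partial_{t_i} = \partial/\partial z_i$.

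Next I would transport the operator $d_K\nabla_\sigma = \sum_{i} t_i \, \partial/\partial t_i$ of \eqref{eq:explicitdknabla} along $\sigmastar$. The explicit formula \eqref{eq:explicit_connection} for $\nabla_\sigma$ says precisely that, under $\sigmastar$, the operator $\partial/\partial t_i$ corresponds to the ordinary partial derivative $\partial/\partial z_i$ on $R/I \otimes k\llbracket \bold{z} \rrbracket$, while multiplication by $t_i$ on $\widehat{R}$ corresponds to multiplication by $z_i$. Hence under $\sigmastar$ the operator $d_K\nabla_\sigma$ corresponds to the Euler operator $E = \sum_{i=1}^n z_i \, \partial/\partial z_i$ on $R/I \otimes k\llbracket \bold{z} \rrbracket$. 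Now $E$ is a $k$-linear derivation: each $\partial/\partial z_i$ is a $k$-linear derivation of $R/I \otimes k\llbracket\bold{z}\rrbracket$, and any $R/I\otimes k\llbracket\bold{z}\rrbracket$-linear combination of derivations is again a derivation. Since $\sigmastar$ is an isomorphism of $k$-algebras, the conjugate $\sigmastar \circ E \circ (\sigmastar)^{-1} = d_K\nabla_\sigma$ is a $k$-linear derivation of $\widehat{R}$, which is the claim.

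The only substantive point is the identification of $d_K\nabla_\sigma$ with the transported Euler field, which is immediate from the description of $\nabla_\sigma$ in Corollary \ref{corollary:nabla_sigma}; the essential role of the smoothness hypothesis is exactly to upgrade the $k\llbracket\bold{z}\rrbracket$-module isomorphism of Lemma \ref{prop_algtube} to an isomorphism of algebras, since conjugation by a mere module isomorphism would not respect the Leibniz rule. Indeed Example \ref{example:dt_xd} (taken with $d \ge 3$) already exhibits the failure in the singular case: there $d_K\nabla_\sigma(x^{d+1}) = x^{d+1}$, whereas $d_K\nabla_\sigma(x^{d-1})\cdot x^2 + x^{d-1}\cdot d_K\nabla_\sigma(x^2) = 0$.
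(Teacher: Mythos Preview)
Your proof is correct and is essentially the same approach as the paper's: both choose $\sigma$ so that $\sigmastar$ is an algebra isomorphism (equivalently, $\sigma$ is a $k$-algebra section), and then use this to see that $d_K\nabla_\sigma$ satisfies the Leibniz rule. The paper presents this as the explicit computation $(rs)_M = \sum_{A+B=M} r_A s_B$, while you phrase the same content as transporting the Euler field $\sum_i z_i\,\partial_{z_i}$ along the algebra isomorphism $\sigmastar$; these are two ways of saying the same thing.
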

\begin{proof}
Suppose $\sigma: R/I \lto \widehat{R}$ is an algebra morphism. Then for $r,s \in R$ we have
\[
\sum_M (rs)_M z^M = \sum_M \sum_{A + B = M} r_As_B z^M\,,
\]
and hence $(rs)_M = \sum_{A+B = M} r_A s_B$. From this the claim follows.
\end{proof}

Recall that for a submanifold $X \subseteq Y$, any tubular neighborhood $f: N_{X/Y} \lto Y$ of $X$ in $Y$ determines an open neighborhood $U = f(N_{X/Y})$ of $X$ and on this open neighborhood a vector field $\mathscr{F} = f_*(\mathscr{E})$ induced by the Euler field $\mathscr{E}$ on $N_{X/Y}$, which is the infinitesimal generator of the scaling action of $\mathbb{R}$ in the fibers. In local coordinates $\mathscr{E} = \sum_i x_i \frac{\partial}{\partial x_i}$ where $x_i$ are the coordinates in the fiber directions. Conversely, the integral curves of the vector field $\mathscr{F}$ determine the tubular neighborhood $f$ \cite[\S 2.3]{burs} and, in particular, $\mathscr{F}$ gives rise to a deformation retract of $U$ onto $X$.

When $R/I$ is smooth, the derivation $d_K \nabla_\sigma$ is in light of \eqref{eq:explicitdknabla} clearly analogous to $\mathscr{F}$. In general $d_K \nabla_\sigma$ will \emph{not} be a derivation, but nonetheless the operators $d_K, \nabla_\sigma$ still give rise to a deformation retract of $\widehat{R}$ onto $R/I$ in a sense that we will make precise below.

\begin{example}\label{example:dt_xd2} In the situation of Example \ref{example:dt_xd} observe that $d_K \nabla = t \frac{\partial}{\partial t}$ is \emph{not} a derivation, as for example $t\frac{\partial}{\partial t}( x^d ) = t$ but $x t \frac{\partial}{\partial t}(x^{d-1}) + t \frac{\partial}{\partial t}(x) x^{d-1} = 0$.
\end{example}

Let us now examine the way in which, in the smooth setting, the vector field $\mathscr{F}$ gives rise to idempotents. The Lie derivative $[-, \mathscr{F}]$ gives rise to an idempotent linear operator on the bundle $TY|_X$ which is a splitting of the exact sequence 
\[
0 \lto TY \lto TX|_Y \lto N_{X,Y} \lto 0\,.
\]
While the Lie derivative with $\mathscr{F}$ gives an idempotent operator on vector fields, it is not idempotent on smooth functions. However, a rescaling of its action on smooth functions does give an idempotent, the corresponding algebra being the following:

\begin{remark} Consider the following diagram of $k$-linear operators
\[
\xymatrix@C+3pc@R+2pc{
\widehat{R} \otimes_{k[\bold{z}]} \Omega^2_{k[\bold{z}]/k} \ar@<0.8ex>[r]^-{d_K} &
\widehat{R} \otimes_{k[\bold{z}]} \Omega^1_{k[\bold{z}]/k} \ar@<0.8ex>[l]^-{\nabla^1_\sigma}\ar@<0.8ex>[r]^-{d_K} & \widehat{R} \ar@<0.8ex>[l]^-{\nabla^0_\sigma} \ar@<0.8ex>[r]^-{\pi} & R/I \ar@<0.8ex>[l]^-{\sigma}
}
\]
where $\pi$ is the quotient map, and $\nabla^*_\sigma$ denotes the extension of $\nabla_\sigma$ to an operator on $\widehat{R} \otimes_{k[\bold{z}]} \Omega^*_{k[\bold{z}]/k}$. It is easy to check that
\[
\left( d_K \nabla^1_\sigma + \nabla^0_\sigma d_K \right)\left( \sum_N \sigma(s_N)t^N \otimes dz_j \right) = \sum_N (1 + |N|) \sigma(s_N) t^N \otimes dz_j\,,
\]
and hence $d_K \left( d_K \nabla^1_\sigma + \nabla^0_\sigma d_K \right)^{-1} \nabla^0_\sigma(r) = r - \sigma(r_0)$. So while $d_K \nabla^0_\sigma$ is not an idempotent operator on $\widehat{R}$, the rescaled operator $d_K \left( d_K \nabla^1_\sigma + \nabla^0_\sigma d_K \right)^{-1} \nabla^0_\sigma$ is an idempotent, which splits $\widehat{R}$ as a $k$-linear direct sum $\widehat{R} \cong R/I \oplus I$.
\end{remark}

\section{Proofs}\label{section:proofs}

In this section we prove Theorem \ref{theorem:main_ainfty_products} and Theorem \ref{theorem:homotopy_clifford}. All notation is as in Setup \ref{setup:overall}, and more generally as in Section \ref{section:the_model}. For example $\AA$ denotes the DG-category of matrix factorisations of $W$ and $\AA_{\theta}$ is the extension of \eqref{eq:defn_AAtheta}. The proofs mostly consist of applying the techniques already developed in \cite{pushforward, cut} using homological perturbation to the situation at hand, but there are two technical points worth noting:
\begin{itemize}
\item[(i)] In \cite{cut} the quasi-regular sequence $t_1,\ldots,t_n$ is always $\partial_{x_1} W, \ldots, \partial_{x_n} W$.
\item[(ii)] In \cite{cut} there is an assumption that $k$ is Noetherian.
\end{itemize}
We explain in Appendix \ref{section:noetherian} why the Noetherian hypothesis is not necessary. Regarding (i), we reiterate the relevant parts of \cite{cut} below in the current generality (that is, the hypotheses of Setup \ref{setup:overall}) where $t_i$ is not necessarily $\partial_{x_i} W$. The upshot is that the only place where the hypothesis that $t_i = \partial_{x_i} W$ plays any meaningful role is in the final form of the Clifford operator $\gamma_i^\dagger$ and we address this explicitly in the proof of Theorem \ref{theorem:homotopy_clifford}.

The aim is to put a strict homotopy retract \cite[\S 3.3]{lazaroiu} on $\AA_\theta$, where
\[
\AA_\theta(X,Y) = \bigwedge F_\theta \otimes \Hom_R(X,Y) \otimes_R \widehat{R}\,.
\]
The $I$-adic completion $\widehat{R}$ of $R$ has $t_1,\ldots,t_n$ as a quasi-regular sequence and $\widehat{R}/I \widehat{R} \cong R/I$. By \cite[Appendix B]{pushforward} there is a standard flat $k$-linear connection
\[
\nabla^0: \widehat{R} \lto \widehat{R} \otimes_{k[\bold{t}]} \Omega^1_{k[\bold{t}]/k}
\]
where $k[\bold{t}]$ denotes the polynomial ring in variables $t_i$. Let $X,Y \in \ob(\AA)$. We run through the steps of \cite[\S 4.3]{cut} with $\bigwedge F_\theta$ our new notation for $S_m$, and the complex $\Hom_R(X,Y) \cong X^{\vee} \l Y$ replacing $Y \l X$ (see \cite[\S 4.5]{cut} which also discusses this special case). The role of $k[\bold{y}]$ in \emph{loc.cit} is now played by $R = k[\bold{x}]$. In the first step $\nabla^0$ extends to a $k$-linear operator $\nabla$ on $\widehat{R} \otimes_{k[\bold{t}]} \Omega^*_{k[\bold{t}]/k}$. Choosing a homogeneous $R$-basis for $X,Y$ and taking the induced basis on $\Hom_R(X,Y)$ over $R$, and extending $\nabla$, we get a $k$-linear splitting homotopy
\[
H = [d_K, \nabla]^{-1} \nabla \quad \text{ on the complex } \quad \xymatrix{ \big( K \otimes_R \Hom_R(X,Y) \otimes_R \widehat{R}, d_K \big) }
\]
where $(K,d_k) = \big( \bigwedge F_\theta \otimes R, \sum_{i=1}^n t_i \theta_i^* \big)$ and we identify $\theta_i$ with $dt_i$. This splitting homotopy corresponds to the strong deformation retract (with homotopy $H$)
\[
\xymatrix@C+3pc{
\Big(R/I \otimes_R \Hom_R(X,Y),0\Big) \ar@<-1ex>[r]_-\sigma & \Big( K \otimes_R \Hom_R(X,Y) \otimes_R \widehat{R}, \,\,d_K = \sum_i t_i \theta_i^*\, \Big)\,, \ar@<-1ex>[l]_-\pi
}
\]
In the second step we view $d_{\Hom}$, the differential on $\Hom_R(X,Y)$, as a perturbation and we learn that
\be\label{eq:phi_infty_appendix}
\phi_\infty = \sum_{m \ge 0} (-1)^m (H d_{\Hom})^m H
\ee
is a $k$-linear splitting homotopy, to which is associated the following $k$-linear strong deformation retract of complexes (with homotopy $\phi_{\infty}$)
\[
\xymatrix@C+3pc@R+3pc{
\Big( R/I \otimes_R \Hom_R(X,Y), d_{\Hom} \Big) \ar@<-1ex>[r]_-{\sigma_\infty} &
\Big( K \otimes_R \Hom_R(X,Y) \otimes_R \widehat{R}, d_K + d_{\Hom} \Big) \ar@<-1ex>[l]_-{\pi}
}
\]
where
\be\label{eq:sigma_infty_appendix}
\sigma_\infty = \sum_{m \ge 0} (-1)^m (H d_{\Hom})^m \sigma\,.
\ee
In the third step, since each $t_i$ acts null-homotopically on $\Hom_R(X,Y)$ we have an isomorphism of complexes over $R$
\[
\xymatrix@C+3pc{
\Big( K \otimes_R \Hom_R(X,Y) \otimes_R \widehat{R}, d_K + d_{\Hom} \Big) \ar@<-1ex>[r]_-{e^{\delta}} & \Big( \bigwedge F_\theta \otimes \Hom_R(X,Y) \otimes_R \widehat{R}, d_{\Hom} \Big)\,, \ar@<-1ex>[l]_-{e^{-\delta}}
}
\]
where $\delta = \sum_{i=1}^n \lambda_i \theta_i^*$. Note we do \emph{not} assume that $\lambda_i$ is a partial derivative of $d_X$. In the fourth step the canonical map $\varepsilon: \Hom_R(X,Y) \lto \Hom_R(X,Y) \otimes_R \widehat{R}$ is by Lemma \ref{lemma:completion_he} (based on \cite[Remark 7.7]{pushforward}) a homotopy equivalence over $k$. Hence we have homotopy equivalences of $k$-complexes, combining the above

\begin{center}
\begin{tabular}{ >{\centering}m{9cm} >{\centering}m{1cm}}
\xymatrix@C+3pc@R+3pc{
\Big( \bigwedge F_\theta \otimes \Hom_R(X,Y), d_{\Hom} \Big) \ar[d]^-{\varepsilon}\\
\Big( \bigwedge F_\theta \otimes \Hom_R(X,Y) \otimes_R \widehat{R}, d_{\Hom} \Big)  \ar@<-2ex>[d]_-{e^{-\delta}}\\
\Big( K \otimes_R \Hom_R(X,Y) \otimes_R \widehat{R}, d_K + d_{\Hom} \Big) \ar@<-2ex>[u]_-{e^{\delta}}\ar@<-2ex>[d]_-{\pi}\\
\Big( R/I \otimes_R \Hom_R(X,Y), \overline{d_{\Hom}} \Big) \ar@<-2ex>[u]_-{\sigma_\infty}
}
&
\tagarray{\label{eq_4_1}}
\end{tabular}
\end{center}
Note that $R/I \otimes_R \Hom_R(X,Y)$ is by our hypotheses a $\nZ_2$-graded complex of finite rank free $k$-modules. Next we argue that \eqref{eq_4_1} gives a strict homotopy retraction of $\AA_{\theta}$. With this in mind the overall content of \eqref{eq_4_1} is a $k$-linear homotopy equivalence
\[
\xymatrix@C+3pc{
\big( \bigwedge F_\theta \otimes \Hom_R(X,Y) \otimes_R \widehat{R}, d_{\Hom} \big) \ar@<-1ex>[r]_-{\Phi} & \big( R/I \otimes_R \Hom_R(X,Y), \overline{d_{\Hom}}\big)\,, \ar@<-1ex>[l]_-{\Phi^{-1}}
}
\]
where we have $\Phi \circ \Phi^{-1} = 1$ and $\Phi^{-1} \circ \Phi = 1 - [d_{\Hom}, \widehat{H}]$ where
\begin{align}
\Phi &= \pi \circ e^{-\delta}\,,\\
\Phi^{-1} &= e^{\delta} \circ \sigma_\infty\,,\\
\widehat{H} &= e^{\delta} \circ \phi_{\infty} \circ e^{-\delta}\,.
\end{align}
By construction we have:

\begin{lemma} The data $(P,G)$ consisting of
\begin{gather*}
P_{X,Y} = \Phi^{-1} \circ \Phi: \AA_\theta(X,Y) \lto \AA_\theta(X,Y)\\
G_{X,Y} = \widehat{H}: \AA_\theta(X,Y) \lto \AA_\theta(X,Y)
\end{gather*}
form a strict homotopy retract on the DG-category $\AA_{\theta}$ in the sense of \cite[\S 3.3]{lazaroiu}.
\end{lemma}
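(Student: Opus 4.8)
The plan is to verify, clause by clause, that $(P,G)$ with $P_{X,Y}=\Phi^{-1}\circ\Phi$ and $G_{X,Y}=\widehat H$ satisfies the definition of a strict homotopy retract in \cite[\S 3.3]{lazaroiu}. Concretely this asks for: (i) all operators $Q$-bilinear and of the correct $\nZ_2$-degree; (ii) the maps $\Phi=\pi\circ e^{-\delta}$ and $\Phi^{-1}=e^{\delta}\circ\sigma_\infty$ are chain maps with $\Phi\circ\Phi^{-1}=1$, so that $P$ is a genuine idempotent chain map; (iii) $1-P=[d_{\Hom},G]$ where $d_{\Hom}$ is the differential on each $\AA_{\theta}(X,Y)$; (iv) the gauge conditions $G^2=0$, $\Phi\circ G=0$, $G\circ\Phi^{-1}=0$; and (v) strict unitality, i.e.\ $G$ annihilates the identity endomorphisms and $P$ fixes them. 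Almost all of this is bookkeeping around the strong deformation retract \eqref{eq_4_1} already assembled following \cite[\S 4.3]{cut}; the only genuinely new point is that none of it used $t_i=\partial_{x_i}W$.

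Items (i)--(iii) are nearly immediate. $Q$-bilinearity and degree hold because every operator is defined hom-space by hom-space and is built from $\pi,\sigma_\infty,\phi_\infty,e^{\pm\delta}$, which carry the stated degrees. That $\Phi,\Phi^{-1}$ are maps of complexes between $\big(\bigwedge F_\theta\otimes\Hom_R(X,Y)\otimes_R\widehat R,\,d_{\Hom}\big)$ and $\big(R/I\otimes_R\Hom_R(X,Y),\,\overline{d_{\Hom}}\big)$, and $\Phi\circ\Phi^{-1}=1$, is precisely the content of \eqref{eq_4_1}; hence $[d_{\Hom},P]=0$ and $P^2=\Phi^{-1}(\Phi\Phi^{-1})\Phi=\Phi^{-1}\Phi=P$. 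The identity $\Phi^{-1}\circ\Phi=1-[d_{\Hom},\widehat H]$ recorded just above is exactly (iii).

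The real work is (iv). I would first record that the contracting homotopy $H=[d_K,\nabla]^{-1}\nabla$ on $\big(\bigwedge F_\theta\otimes\Hom_R(X,Y)\otimes_R\widehat R,\,d_K\big)$, together with $\sigma$ and $\pi$, satisfies the three special side conditions $H^2=0$, $\pi\circ H=0$, $H\circ\sigma=0$: the connection $\nabla=\sum_i\theta_i\partial_{t_i}$ is flat ($\nabla^2=0$) and graded-commutes with $[d_K,\nabla]$, hence with its inverse on the relevant ($\theta$-positive-degree) subspace, so $H^2=[d_K,\nabla]^{-2}\nabla^2=0$; $H$ has image in strictly positive $\theta$-degree, which $\pi$ kills, so $\pi H=0$; and $\nabla$ kills $t$-constant, $\theta$-free elements, so $H\sigma=0$. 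The homological perturbation lemma in its side-condition--preserving form (as used throughout \cite{pushforward,cut}), applied with the perturbation $d_{\Hom}$, then yields $\sigma_\infty,\pi,\phi_\infty$ with $\phi_\infty^2=0$, $\pi\circ\phi_\infty=0$, $\phi_\infty\circ\sigma_\infty=0$; these are in fact transparent from the explicit series \eqref{eq:phi_infty_appendix}, \eqref{eq:sigma_infty_appendix} once $H^2=\pi H=H\sigma=0$ is known. Conjugating by the strict isomorphism of complexes $e^{\delta}$ and using $e^{-\delta}e^{\delta}=1$ gives $\widehat H^2=e^{\delta}\phi_\infty^2 e^{-\delta}=0$, $\Phi\circ\widehat H=(\pi\phi_\infty)e^{-\delta}=0$ and $\widehat H\circ\Phi^{-1}=e^{\delta}(\phi_\infty\sigma_\infty)=0$; hence $P\circ G=\Phi^{-1}(\Phi\widehat H)=0$ and $G\circ P=(\widehat H\Phi^{-1})\Phi=0$. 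For (v), the identity $u_X\in\AA_{\theta}(X,X)$ is a $d_{\Hom}$-cocycle with $t$-constant, $\theta$-free and ($\Hom$-)constant entries, so for the natural choice of section (with $\sigma(1)=1$) one has $e^{\pm\delta}(u_X)=u_X$ and $\nabla(u_X)=0$, whence $G(u_X)=\widehat H(u_X)=0$ and $P(u_X)=\Phi^{-1}(\overline{u_X})=u_X$.

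The main obstacle is organisational rather than mathematical: one must pin down the precise version of the perturbation lemma being invoked and confirm that it transports the special gauge conditions $H^2=\pi H=H\sigma=0$ through the perturbation by $d_{\Hom}$. Since the base homotopy $H$ manifestly satisfies these and the perturbation series \eqref{eq:phi_infty_appendix}, \eqref{eq:sigma_infty_appendix} are explicit, this reduces to the verification already carried out in \cite{cut}, now performed in the generality of Setup \ref{setup:overall}.
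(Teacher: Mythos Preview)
Your verification is correct in substance and amounts to spelling out what the paper dismisses as ``by construction'': the identities $\Phi\circ\Phi^{-1}=1$ and $\Phi^{-1}\circ\Phi=1-[d_{\Hom},\widehat H]$ are displayed immediately before the lemma, and these are precisely what Lazaroiu's notion of strict homotopy retract demands (an idempotent chain map $P$ and a homotopy $G$ with $1-P=[d,G]$, everything $Q$-bilinear of the right degree).

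However, you over-specify the definition. Your items (iv) and (v) are \emph{not} part of ``strict homotopy retract'' in \cite[\S 3.3]{lazaroiu}. The paper makes this explicit in the proof of Theorem~\ref{theorem:main_ainfty_products}: it says ``we have a strong deformation retract (not just a strict homotopy retract in the sense of \cite{lazaroiu})'', and then checks $P\circ G=0$ and $G(u_X)=0$ \emph{there}, as the extra hypotheses needed to invoke \cite[p.37]{lazaroiu} for strict unitality of $\BB$. So your (iv) is the upgrade from strict homotopy retract to strong deformation retract, and your (v) is the unit-compatibility used downstream --- both true, both correctly argued, but neither belongs to the statement of this lemma. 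What you have written is closer to a proof of the lemma together with the strict-unitality step of the subsequent theorem.
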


\begin{proof}[Proof of Theorem \ref{theorem:main_ainfty_products}] 
By homological perturbation \cite[\S 3.3, p.33]{lazaroiu} (see also \cite[\S I]{seidel} and particularly \cite[Remark 1.15]{seidel}) applied to the strict homotopy retract of the lemma, we obtain forward suspended $A_\infty$-products $\{ \rho_k \}_{k \ge 1}$ defined on the family of spaces
\[
\xymatrix@C+2pc{
\BB(X,Y) = R/I \otimes_R \Hom_R(X,Y) \ar[r]_-{\cong}^-{\Phi^{-1}} & \operatorname{Im}(P_{X,Y})
}
\]
which make $(\BB, \rho)$ into an $A_\infty$-category. These higher products have the given description in terms of sums over trees decorated by $\sigma_\infty, \phi_\infty, e^{\delta}, e^{-\delta}, r_2$ but the description of $\sigma_\infty, \phi_\infty$ used in Definition \ref{defn:important_operators} differs from that given in \eqref{eq:sigma_infty_appendix}, \eqref{eq:phi_infty_appendix} above and we have to address this discrepancy. This uses an argument that first appeared in a slightly different form in \cite[(10.3),(10.4)]{pushforward}. We set $\zeta = [d_K, \nabla]^{-1}$ so that
\[
\sigma_\infty = \sum_{m \ge 0} (-1)^m (\zeta \nabla d_{\Hom})^m \sigma\,.
\]
Now $\nabla \zeta = \zeta \nabla$, $\nabla^2 = 0$ and $\nabla \sigma = 0$ hence we can replace any string $\zeta \nabla d_{\Hom} \cdots \zeta \nabla d_{\Hom} \sigma$ by the string $\zeta [d_{\Hom}, \nabla] \cdots \zeta [d_{\Hom}, \nabla] \sigma$ and so
\be\label{eq:rewritten_sigma_infty}
\sigma_\infty = \sum_{m \ge 0} (-1)^m \Big( \zeta [\nabla, d_{\Hom}] \Big)^m \sigma = \sum_{m \ge 0} (-1)^m (\zeta \vAt_{\AA})^m \sigma \,.
\ee
In the same way we obtain the form of $\phi_\infty$ given in Definition \ref{defn:important_operators}. Finally, the fact that $\zeta$ may be computed by the formula of Definition \ref{definition:zeta} is easily checked.

From the theory of homological perturbation we also obtain $A_\infty$-functors $F,G$ with $F_1 = \Phi, G_1 = \Phi^{-1}$ and an $A_\infty$-homotopy $G \circ F \simeq 1$, see \cite{markl_transfer}. It remains to check strict unitality. Clearly $\AA_{\theta}$ is strictly unital, since it is a DG-category. Now we apply \cite[p.37]{lazaroiu} which shows that $\BB$ is strictly unital, provided $P_{X,Y} \circ G_{X,Y} = 0$ and $G_{X,X}(u_X) = 0$ for all units $u_X$. The first condition follows because we have a strong deformation retract (not just a strict homotopy retract in the sense of \cite{lazaroiu}) and the second condition because
\[
u_X = 1 \otimes \operatorname{id}_{X,X} \in \bigwedge F_\theta \otimes \Hom_R(X,X) \otimes_R \widehat{R}
\]
and hence $G_{X,X}(u_X) = 0$ since $e^{-\delta}(1 \otimes \operatorname{id}_{X,X}) = 1 \otimes \operatorname{id}_{X,X}$ and $\nabla(1) = 0$.
\end{proof}

\begin{proof}[Proof of Theorem \ref{theorem:homotopy_clifford}] We have $F_1 = \Phi, G_1 = \Phi^{-1}$ so that $\gamma_i = \Phi \theta_i^* \Phi^{-1}, \gamma^\dagger_i = \Phi \theta_i \Phi^{-1}$. These transfers were studied in \cite{cut} quite generally, but the final formulas were given only in the special case where $t_i = \partial_{x_i} W$, and our job here is to recapitulate the argument to the degree that is necessary to derive the general formula and exhibit that everything we need from \cite{cut} works in full generality.

Firstly observe that since $1 \simeq G_1 F_1$ there is a $k$-linear homotopy
\begin{align*}
E_1 &= F_1 e G_1 = F_1 \theta_n^* \cdots \theta_1^* \theta_1 \cdots \theta_n^* G_1 \simeq \gamma_n \cdots \gamma_1 \gamma_1^\dagger \cdots \gamma_n^\dagger\,.
\end{align*}
From \cite[Lemma 4.17]{cut} we get $e^{-\delta} \theta_i^* e^{\delta} = \theta_i^*$ and \cite[Theorem 4.28]{cut} gives
\begin{align*}
e^{-\delta} \theta_i e^{\delta} &= \theta_i - \sum_{m \ge 0} \sum_{q_1,\ldots,q_m} \frac{1}{(m+1)!} \big[ \lambda_{q_m}\,, \big[ \lambda_{q_{m-1}}, \big[ \cdots [ \lambda_{q_1}, \lambda_i ] \cdots \big] \theta^*_{q_1} \cdots \theta^*_{q_m}\\
&= \theta_i - \lambda_i - \frac{1}{2} \sum_q [\lambda_q, \lambda_i] \theta_q^* + \cdots
\end{align*}
Next we copy elements from the proof of \cite[Proposition 4.35]{cut}. Throughout $\At_i$ means $[d_{\Hom}, \partial_{t_i}]$ defined as in \cite{pushforward} with respect to the ring morphism $k[\bold{t}] \lto R$. Note that in the formula \eqref{eq:rewritten_sigma_infty} for $\sigma_\infty$ the Atiyah class $\vAt_{\AA}$ is $k[\bold{t}]$-linear and so, using that as an operator on $\bigwedge F_\theta \otimes R/I \otimes k\llbracket \bold{t} \rrbracket$ the operator $\zeta$ has the property that modulo $(\bold{t}) k\llbracket \bold{t} \rrbracket$
\[
\zeta( \omega \otimes z \otimes f ) = \frac{1}{|\omega|} \omega \otimes z \otimes f \quad \text{(mod $\bold{t}$)}
\]
we may calculate
\begin{align*}
\pi \theta_{q_1}^* \cdots \theta_{q_m}^* \sigma_\infty &= \sum_{m \ge 0} (-1)^m \pi \theta_{q_1}^* \cdots \theta_{q_m}^* \Big\{ \zeta [\nabla, d_{\Hom}] \Big\}^m \sigma\\
&= \sum_{m \ge 0} (-1)^m \frac{1}{m!} \pi \theta_{q_1}^* \cdots \theta_{q_m}^* \Big\{ \sum_{k=1}^n \theta_k [\partial_{t_k}, d_{\Hom}] \Big\}^m \sigma\\
&= \frac{1}{m!} \sum_{\tau \in S_m} (-1)^{|\tau|} \At_{q_{\tau 1}} \cdots \At_{q_{\tau m}}\,.
\end{align*}
So far this is an \emph{equality} of operators on the complex $\BB(X,Y)$, but we can now apply the fact that Atiyah classes anti-commute up to $k$-linear homotopy (the argument of \cite[Theorem 3.11]{cut} applies) to see that
\[
\pi \theta_{q_1}^* \cdots \theta_{q_m}^* \sigma_\infty \simeq \At_{q_1} \cdots \At_{q_m}\,.
\]
To conclude we apply this to the operators $\gamma_i = \pi e^{-\delta} \theta_i^* e^\delta \sigma_\infty = \pi \theta_i^* \sigma_\infty$ and $\gamma_i^\dagger = \pi e^{-\delta} \theta_i e^\delta \sigma_\infty$ exactly as in \cite[Proposition 4.35]{cut}.
\end{proof}

\section{Removing Noetherian hypotheses}\label{section:noetherian}

In \cite{cut} there is a hypothesis that the base ring $k$ is Noetherian. This is not necessary, and we explain how to remove it. Along the way we prove Lemma \ref{lemma:completion_he}. Throughout $k$ is a commutative $\mathbb{Q}$-algebra and $R$ a $k$-algebra, and $t_1,\ldots,t_n$ is a quasi-regular sequence in $R$ with $I = (t_1,\ldots,t_n)$. We write $\widehat{R}$ for the $I$-adic completion. It is well-known that the sequence $t_1,\ldots,t_n$ is quasi-regular in $\widehat{R}$ and that the canonical map $R/I \lto \widehat{R}/I\widehat{R}$ is an isomorphism; see for example \cite[\S 15.B]{matsumura}, \cite[Chapitre $0$ \S 15.1]{EGA4}.

\begin{lemma}\label{lemma:trivialsplit} Suppose that
\begin{itemize}
\item[(i)] $R, R/I$ are projective $k$-modules.
\item[(ii)] The Koszul complex $K$ of $t_1,\ldots,t_n$ over $R$ is exact in nonzero degrees.
\end{itemize}
Then if $\sigma: R/I \lto R$ is any $k$-linear section of the quotient map $\pi: R \lto R/I$ and $\pi$ is is the canonical morphism of complexes $\pi: (K,d_K) \lto (R/I,0)$ then there is a degree $-1$ $k$-linear operator $h$ on $K$ as in the diagram
\[
\xymatrix@C+3pc{
(R/I,0) \ar@<-1ex>[r]_-\pi & (K,d_K)\,, \ar@<-1ex>[l]_-\sigma
}\quad h
\]
such that
\begin{itemize}
\item $\pi \sigma = 1$,
\item $\sigma \pi = 1_K - [d_K, h]$
\item $h^2 = 0, h\sigma = 0, \pi h = 0$.
\end{itemize}
\end{lemma}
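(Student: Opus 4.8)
The statement asserts the existence of a $k$-linear contracting homotopy $h$ exhibiting the Koszul complex $K$ as homotopy equivalent over $k$ to its degree-zero homology $R/I$, with the extra side conditions $h^2 = 0$, $h\sigma = 0$, $\pi h = 0$ (a \emph{special} or \emph{side-condition} deformation retract). Since the hypotheses (i), (ii) say precisely that $K$ is a complex of projective $k$-modules which is exact except in degree $0$, where its homology is $R/I$, the first step is to produce \emph{some} $k$-linear splitting homotopy, and the remaining work is to modify it so that the three side conditions hold. The existence of a raw splitting is standard homological algebra: because $K$ consists of projective $k$-modules and is exact in positive degrees, and $R/I$ is also projective over $k$, the augmentation $\pi: K \to R/I$ is a quasi-isomorphism of bounded complexes of projectives, hence a homotopy equivalence; the chosen section $\sigma$ (which satisfies $\pi\sigma = 1$ on the nose) is then a homotopy inverse, so there is a degree $-1$ operator $h_0$ with $\sigma\pi = 1_K - [d_K, h_0]$. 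Concretely one builds $h_0$ degree by degree using projectivity of the $K_p$ to lift along the surjections $d_K: \ker(d_K|_{K_p}) \twoheadleftarrow K_{p+1}$ (exactness in positive degree), exactly as in the proof that quasi-isomorphisms of projective complexes split.

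\textbf{Imposing the side conditions.} The standard trick is then applied in two stages. First, replace $h_0$ by $h_1 = (1 - \sigma\pi) h_0 (1 - \sigma\pi)$; one checks using $\pi\sigma = 1$, $\pi d_K = 0$, $d_K\sigma = 0$ (so $d_K\sigma\pi = 0$ and $\pi d_K = 0$, hence $\sigma\pi$ commutes with $d_K$ since $[d_K,\sigma\pi]$ telescopes — more precisely one uses that $\sigma\pi$ is an idempotent chain map homotopic to the identity) that $h_1$ still satisfies $\sigma\pi = 1 - [d_K, h_1]$ and now additionally $h_1\sigma = 0$ and $\pi h_1 = 0$. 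Second, to also force $h_1^2 = 0$, replace $h_1$ by $h = h_1 d_K h_1$; the computation $[d_K, h_1 d_K h_1] = [d_K,h_1] d_K h_1 + h_1 d_K [d_K,h_1]$ combined with $[d_K,h_1] = 1 - \sigma\pi$ and $d_K(1-\sigma\pi) = (1-\sigma\pi)d_K = d_K$ shows $[d_K, h] = [d_K, h_1]$, so the homotopy identity is preserved; and $h^2 = h_1 d_K h_1 h_1 d_K h_1$ vanishes because after the first replacement $h_1 h_1$ has been arranged to have image in $\ker(\pi)\cap(\ker d_K)$... — I should instead cite the cleaner route: it is classical (the "Lambe–Stasheff trick", see e.g. the treatment of side conditions in homological perturbation theory) that any contraction can be modified to satisfy all three side conditions simultaneously, and I would invoke that rather than rederiving it. The surviving side conditions $h\sigma = 0$, $\pi h = 0$ follow because $h$ is built from $h_1$ by pre- and post-composing with $d_K$ and $h_1$, all of which kill $\sigma$ on the right and $\pi$ on the left.

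\textbf{The role of $\mathbb{Q}$ and the main obstacle.} No averaging or characteristic-zero argument is needed here — the lemma is pure homological algebra over a commutative ring, and the projectivity hypotheses are exactly what make the lifting arguments go through without Noetherianity (this is the whole point of the appendix: the Noetherian hypothesis in \cite{cut} was only used to guarantee finiteness/freeness properties that are here replaced by the explicit projectivity assumptions in Setup \ref{setup:connection_t} and the definition of a potential). The main obstacle is bookkeeping rather than conceptual: verifying that the two-step modification preserves the homotopy identity $\sigma\pi = 1_K - [d_K,h]$ while achieving all three side conditions requires a careful sequence of commutator manipulations using $\pi\sigma = 1$, $\pi d_K = 0$, $d_K\sigma = 0$. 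I would either carry these out explicitly (they are short) or, preferably, cite the standard side-conditions lemma from homological perturbation theory and note that its hypotheses — a contraction between complexes — are met once the raw $h_0$ is constructed. The construction of $h_0$ itself is the only place the hypotheses (i), (ii) are used, and it is the routine "projective resolutions lift maps" argument; I expect to spend most of the written proof on that construction and then dispatch the side conditions by the trick or a citation.
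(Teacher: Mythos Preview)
Your proposal is correct and close in spirit to the paper's, but the paper takes a more direct route. Both begin the same way: projectivity of $R/I$ and of each $K_p$ (hence inductively of each $Z_p = \ker(d_K|_{K_p})$) means the complex breaks into $k$-split short exact sequences $0 \to Z_p \to K_p \to Z_{p-1} \to 0$, starting with $0 \to I \to R \to R/I \to 0$ split by the given $\sigma$. The paper's one-sentence proof simply observes that from these splittings $s_p: Z_{p-1} \to K_p$ one can define $h$ directly: set $h|_{Z_p} = s_{p+1}$ and $h|_{s_p(Z_{p-1})} = 0$. With this definition $h^2 = 0$, $h\sigma = 0$, and $\pi h = 0$ all hold \emph{by construction} (the last is automatic for degree reasons anyway), so no subsequent adjustment is needed.

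Your approach instead builds a raw $h_0$ by lifting and then invokes the Lambe--Stasheff side-condition trick. This is valid --- the cited result is classical and correct --- and has the virtue of modularity. However, note that your explicit formula $h = h_1 d_K h_1$ does not by itself yield $h^2 = 0$: one computes $(h_1 d_K h_1)^2 = h_1 d_K h_1^2 d_K h_1$, which need not vanish without further input, so you were right to retreat to the citation. The paper's construction sidesteps this entirely. In short: same starting point, but the paper exploits the bounded, split nature of the Koszul complex to get the side conditions for free, whereas you treat the side conditions as a separate black box.
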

\begin{proof}
Since $R$ and $R/I$ are projective over $k$, the Koszul complex may be decomposed into a series of \emph{split} short exact sequences, and $h$ is easily constructed from these splittings with the desired properties.
\end{proof}

Next we observe that \cite[Remark 7.7]{pushforward} holds in greater generality. 

\begin{remark}\label{remark:fixing_dm} Let $\varphi: S \lto R$ be a ring morphism, $W \in S$ and $(X,d)$ a matrix factorisation of $W$ over $R$ (again, not necessarily finite rank), $t_1,\ldots,t_n$ a quasi-regular sequence in $R$ with $t_i \cdot 1_X \simeq 0$, and homotopies $\lambda_i$. We assume that there is a deformation retract (of $\mathbb{Z}$-graded complexes) over $S$
\be\label{eq:fixing_dm_1}
\xymatrix@C+3pc{
(R/\bold{t} R,0) \ar@<-1ex>[r]_-\pi & (K_R(\bold{t}),d_K)\,, \ar@<-1ex>[l]_-\sigma
}\quad h
\ee
satisfying $h^2 = 0, h \sigma = 0, \pi h = 0$ where $\pi$ is the canonical map. By the previous lemma, it suffices for this to assume that $R,R/I$ are projective over $k$ and that $K_R(\bold{t}) = K$ is exact except in degree zero. Let $\alpha: R \lto R'$ be any ring morphism such that
\begin{itemize}
\item $t_1,\ldots,t_n$ is quasi-regular in $R'$
\item the induced map $R/\bold{t} R \lto R'/\bold{t} R'$ is an isomorphism
\item there is a deformation retract (of $\mathbb{Z}$-graded complexes over $S$)
\be\label{eq:fixing_dm_2}
\xymatrix@C+3pc{
(R'/\bold{t} R',0) \ar@<-1ex>[r]_-{\pi'} & (K_{R'}(\bold{t}), d'_K)\,, \ar@<-1ex>[l]_-{\sigma'}
}\quad h'
\ee
satisfying $(h')^2 = 0, h'\sigma' = 0, \pi' h' = 0$ where $K_{R'}(\bold{t})$ is the Koszul complex over $R'$.
\end{itemize}
Then the canonical morphism of linear factorisations of $W$ over $S$, $\varphi_* X \lto (\alpha \varphi)_* \alpha^* X$ is an $S$-linear homotopy equivalence. Here is the proof (note that we are not assuming $\alpha$ is flat, as is done in \cite[Remark 7.7]{pushforward}). The results of \cite{pushforward} show that we have homotopy equivalences over $S$ (where $X' = X \otimes_R R'$)
\begin{gather*}
\pi: X \otimes_R K_R(\bold{t}) \lto X/\bold{t} X\,,\\
\pi': X' \otimes_{R'} K_{R'}(\bold{t}) \lto X'/\bold{t} X'
\end{gather*}
and the canonical $R$-linear map $X/\bold{t} X \lto X'/\bold{t} X'$ is an isomorphism, by hypothesis. It is clear that the canonical $R$-linear map $X \lto X'$ induces a commutative diagram
\[
\xymatrix@C+2pc{
X \otimes_R K_R(\bold{t}) \ar[d]\ar[r]^-\pi & X/\bold{t} X \ar[d]^-{\cong}\\
X' \otimes_R K_{R'}(\bold{t}) \ar[r]_-{\pi'} & X'/\bold{t} X'
}
\]
hence a homotopy commutative diagram $(\pi = \varepsilon \pi^{-1}, \psi' = \varepsilon \circ (\pi')^{-1}$ in the notation of \cite{pushforward})
\[
\xymatrix@C+2pc{
X/\bold{t} X \ar[d]_-{\cong} \ar[r]^-{\pi^{-1}} & X \otimes_R K_R(\bold{t}) \ar[d] \ar[r]^-\varepsilon & X[n] \ar[d]\\
X'/\bold{t} X' \ar[r]_{(\pi')^{-1}} & X \otimes_R K_{R'}(\bold{t}) \ar[r]_-{\varepsilon} & X'[n]
}
\]
If we produce $\vartheta, \vartheta'$ using the homotopies $\lambda_i, \lambda_i' = \lambda_i \otimes_R R'$ as explained in \cite[\S 4]{pushforward} then it is also clear that the following diagram commutes
\[
\xymatrix@C+5pc{
X[n] \ar[d] \ar[r]^-{\vartheta = (-1)^n \pi \lambda_1 \cdots \lambda_n} & X/\bold{t} X \ar[d]^-{\cong} \\
X'[n] \ar[r]_-{\vartheta' = (-1)^n \pi' \lambda_1' \cdots \lambda_n'} & X'/\bold{t} X'\,.
}
\]
So in summary both squares in the following diagram commute in $\HF(S,W)$
\[
\xymatrix@C+2pc{
X/\bold{t} X \ar[d]_-{\cong} \ar[r]^-{\psi} & X[n] \ar[d] \ar[r]^-\vartheta & X/\bold{t} X \ar[d]\\
X'/\bold{t} X' \ar[r]_-{\psi'} & X'[n] \ar[r]_-{\vartheta'} & X'/\bold{t} X'
}
\]
Now $\psi \vartheta = 1$ and $\psi' \vartheta' = 1$ in the homotopy category, and the above shows that $\vartheta \psi = \vartheta' \psi'$ (identifying $X/\bold{t} X$ with $X'/\bold{t} X'$ as these are isomorphic, not just homotopy equivalent). Since they split the same idempotent, $X, X'$ must be isomorphic in $\HF(S,W)$ and so the middle column in the above diagram is a homotopy equivalence, as claimed.
\end{remark}

In \cite{cut} we assumed the ground ring $k$ was Noetherian. The only reason for this hypothesis was to guarantee that the ring homomorphism $k[\bold{y}] \lto \widehat{k[\bold{y}]}$ is flat, which allows us to use \cite[Remark 7.7]{pushforward} to infer that the canonical map $\varepsilon: Y \otimes_{k[\bold{y}]} X \lto Y \otimes_{k[\bold{y}]} \widehat{k[\bold{y}]} \otimes_{k[\bold{y}]} X$ is a homotopy equivalence, in \cite[\S 4.3]{cut}. Using Remark \ref{remark:fixing_dm} above we explain why everything in \cite{cut} holds for \emph{any} commutative $\mathbb{Q}$-algebra.

We assume we are in the context of \cite[Setup 4.1]{cut} and in particular that $U,V,W$ are potentials in the sense of \cite[Definition 2.4]{lgdual}. Set $t_i = \partial_{y_i} V$ for $1 \le i \le m$. By hypothesis this is a quasi-regular sequence in $R = k[\bold{y}]$, $R/\bold{t} R$ is a finitely generated projective $k$-module, and $K_R(\bold{t})$ is exact outside degree zero. We claim that Remark \ref{remark:fixing_dm} applies to
\begin{gather*}
\varphi: k[\bold{x},\bold{z}] \lto k[\bold{x},\bold{y},\bold{z}], \qquad ( Y \otimes_{k[\bold{y}]} X, d_Y \otimes 1 + 1 \otimes d_X)\\
\alpha: k[\bold{x},\bold{y},\bold{z}] \cong k[\bold{x},\bold{z}] \otimes k[\bold{y}] \lto k[\bold{x},\bold{z}] \otimes \widehat{k[\bold{y}]}
\end{gather*}
We check each of the hypotheses in turn:
\begin{itemize}
\item By Lemma \ref{lemma:trivialsplit} since $R,R/I$ are projective $k$-modules and $K_R(\bold{t})$ is exact outside degree zero we have a deformation retract \eqref{eq:fixing_dm_1} satisfying the necessary conditions.

\item Applying \cite[Appendix B]{pushforward} to the tuple $(k, \widehat{R}, \bold{t})$ gives an isomorphism of $k[\bold{z}]$-modules
\[
\sigmastar: R/I \otimes k[\bold{z}] \lto \widehat{R}\,.
\]
Observe that $R/I \otimes K_{k\llbracket \bold{z} \rrbracket}(\bold{z}) \cong K_{\widehat{R}}(\bold{t})$ via $\sigmastar$ as complexes of $k[\bold{z}]$-modules, and in particular $K_{\widehat{R}}(\bold{t})$ is exact outside degree zero. Moreover using connections we may produce a deformation retract \eqref{eq:fixing_dm_2} of the form required by Remark \ref{remark:fixing_dm}.
\end{itemize}
The conclusion is that in $\HF( k[\bold{x},\bold{z}], U - W)$ the canonical map
\[
Y \otimes_{k[\bold{y}]} X \lto Y \otimes_{k[\bold{y}]} \widehat{k[\bold{y}]} \otimes_{k[\bold{y}]} X\,.
\]
is an isomorphism, as claimed. This removes the Noetherian hypothesis from \cite{cut}.

\begin{proof}[Proof of Lemma \ref{lemma:completion_he}] This is a special case of the above: we apply Remark \ref{remark:fixing_dm} to
\begin{gather*}
\varphi: k \lto k[\bold{x}]\,, \qquad ( \Hom_R(X,Y), d_{\Hom} )\,, \qquad \alpha: k[\bold{x}] \lto \widehat{k[\bold{x}]}
\end{gather*}
Notice that in the above $t_1,\ldots,t_n$ need not be the sequence of partial derivatives, provided it satisfies the conditions set out in Setup \ref{setup:overall}.
\end{proof}

\section{Operator decorated trees}\label{section:trees}

Here we collect some standard material on decorated trees. A \emph{tree} is a connected acyclic unoriented graph. All our trees are finite. A \emph{rooted tree} is a tree in which a particular leaf vertex (meaning a vertex of valency $1$) has been designated the root. We view a rooted tree as an oriented graph by orienting all edges towards the root, that is, in the direction of the unique path to the root. The \emph{children} of a vertex $v$ are those vertices $w$ for which $v$ is the next vertex on the path from $w$ to the root. If $w$ is a child of $v$ we call $v$ the \emph{parent} of $w$. A \emph{plane} tree is a rooted tree together with a chosen linear order $w_1,\ldots,w_n$ for the set of children of each vertex $v$. A \emph{morphism} in the category of plane trees is a morphism of oriented graphs which preserves the root vertex and the ordering on children. A plane tree is \emph{valid} if it has $n + 1$ leaves (including the root) for some $n \ge 2$ and all non-leaves have valency at least three. We call leaves \emph{external vertices}, non-leaves \emph{internal vertices}, the edges meeting a leaf are \emph{external edges} and the others are \emph{internal edges}.

\begin{definition} $\cat{T}_n$ is the set of isomorphism classes of valid plane trees with $n + 1$ leaves. We denote by $\cat{BT}_n$ the set of isomorphism classes of valid plane \emph{binary} trees, that is, trees in which every non-leaf vertex has valency three.
\end{definition}

Let $S$ be a commutative, associative (but not necessarily unital) ring. When we speak of graded $S$-modules we mean either $\nZ$ or $\nZ_2$-graded.

\begin{definition} An $S$-linear \emph{decoration} of a plane tree $T$ is the following data:
\begin{itemize}
\item a graded $S$-module $L_v$ for each leaf $v$ (including the root).
\item a graded $S$-module $M_e$ for each edge $e$.
\item for each internal vertex $v$ with incoming edges (in order) $e_1,\ldots,e_k$ and outgoing edge $e$ an integer $N_v$ (in $\nZ$ or $\nZ_2$) and a degree $N_v$ $S$-linear map
\[
\phi_v: M_{e_1} \otimes_S \cdots \otimes_S M_{e_k} \lto M_e\,.
\]
\item for each non-root leaf vertex $l$ a degree zero $S$-linear map $\phi_l: L_l \lto M_e$ where $e$ is incident at $l$.
\item a degree zero $S$-linear map $\phi_r: M_e \lto L_r$ where $e$ is incident at $r$, the root vertex.
\end{itemize}
\end{definition}

\begin{definition}\label{defn:branch_decomposition} Let $T$ be any plane tree and $D$ an $S$-linear decoration of $T$. Let $r$ be the root vertex and $v$ the vertex adjacent to $r$. Suppose $v$ has valence $k+1$ (possibly $k = 0$). Consider the diagram
\begin{center}
\includegraphics[scale=0.9]{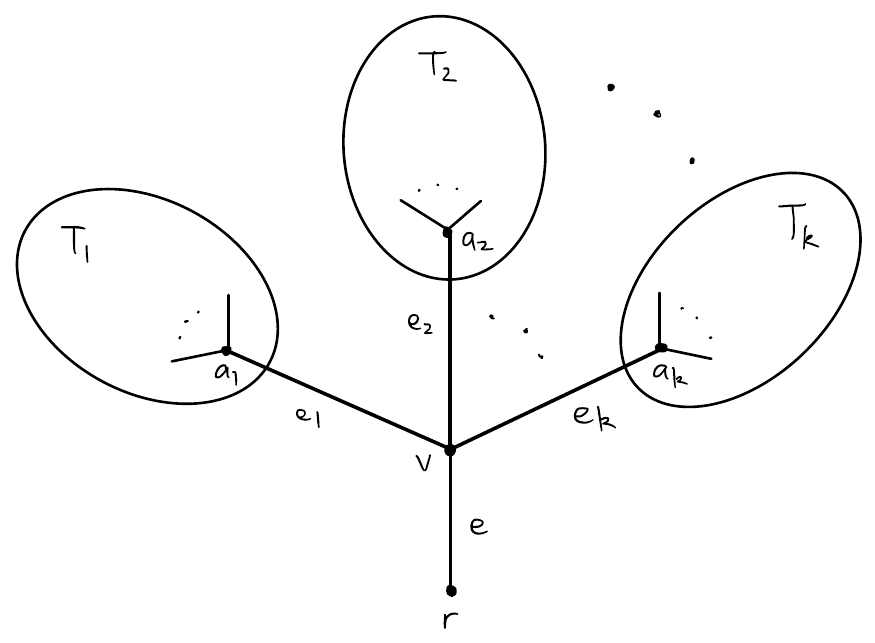}
\end{center}
We define plane trees $\widetilde{T}_1,\ldots,\widetilde{T}_k$ to be (note $e_i$ is \emph{not} in $\widetilde{T}_i$)
\begin{center}
\includegraphics[scale=0.9]{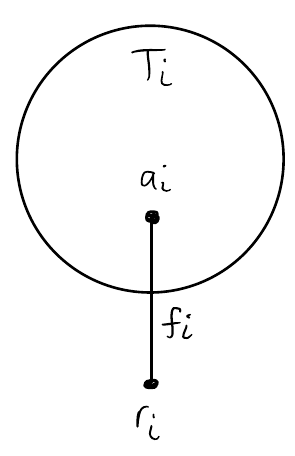}
\end{center}
where $r_i$ is some new vertex, which we declare to the root of $\widetilde{T}_i$. We make $\widetilde{T}_i$ a plane tree using the ordering from $T$, and decorate $\widetilde{T}_i$ according to the decoration $D_i$ which agrees with $D$ and assigns $M_{e_i}$ from $D$ to $f_i$ and also $L_{r_i} = M_{e_i}, \phi_{r_i} = \operatorname{id}$ and $\phi_{a_i}$ as in $D$.
\end{definition}


\begin{definition} Given an $S$-linear decoration $D$ of $T$ the \emph{denotation} $\langle D \rangle$ is a homogeneous $S$-linear map $L_{l_1} \otimes_S \cdots \otimes_S L_{l_n} \lto L_r$ where $l_1,\ldots,l_n$ denote the non-root leaves (in order) defined as follows:
\begin{itemize}
\item if $T$ has one edge, then $\langle D \rangle = \phi_r \circ \phi_v$.
\item otherwise we define recursively
\be\label{eq:end_of_line}
\langle D \rangle = \phi_r \circ \phi_v \circ \Big( \langle D_1 \rangle \otimes_S \cdots \otimes_S \langle D_k \rangle \Big)
\ee
using the branch decomposition of Definition \ref{defn:branch_decomposition}. Note that this is a tensor product of homogeneous operators on graded $S$-modules, and obeys the usual Koszul sign convention when evaluated on a homogeneous tensor.
\end{itemize}
\end{definition}

Note that this denotation differs by signs from another natural evaluation of the diagram, which composes operators by organising them according to their \emph{height} in the tree, rather than their \emph{branch} (as we have done).

\begin{definition}\label{defn:evaluation_tree} Let $T$ be a plane tree, $D$ a decoration and $L_1,\ldots,L_n,L_r$ the modules assigned to the leaves so that $\langle D \rangle: L_1 \otimes_S \otimes \cdots \otimes_S L_n \lto L_r$. We define
\[
\operatorname{eval}_D: L_1 \otimes_S \cdots \otimes_S L_n \lto L_r
\]
to be the $S$-linear map associated to $D$ as a diagram in the category of $S$-modules, \emph{ignoring the grading}. Recursively, if $T$ has one edge $\operatorname{eval}_D = \phi_r \circ \phi_v$ and in the case of \eqref{eq:end_of_line} we use the same formula but read the tensor product as being of plain $S$-linear maps (so there are no Koszul signs when we evaluate the results on an input tensor).
\end{definition}

\bibliographystyle{amsalpha}
\providecommand{\bysame}{\leavevmode\hbox to3em{\hrulefill}\thinspace}
\providecommand{\href}[2]{#2}

\end{document}